\theoremstyle{plain}
\newtheorem{theorem}{Theorem}[section]
\newtheorem{proposition}[theorem]{Proposition}
\newtheorem{corollary}[theorem]{Corollary}
\newtheorem{lemma}[theorem]{Lemma}
\newtheorem{claim}{Claim}
\theoremstyle{definition}
\newtheorem{definition}[theorem]{Definition}
\newtheorem{construction}[theorem]{Construction}
\theoremstyle{remark}
\newtheorem{remark}[theorem]{Remark}
\newtheorem{example}[theorem]{Example}
\newcommand{\bC}{\mathbb{C}}
\newcommand{\fg}{\mathfrak{g}}
\newcommand{\fh}{\mathfrak{h}}
\newcommand{\fm}{\mathfrak{m}}
\newcommand{\ext}[1]{{#1^\dagger}}
\newcommand{\extc}[1]{\tilde{#1}}
\title[Modules over quandle spaces and Lie-Yamaguti algebras]
{Modules over quandle spaces and representations of Lie-Yamaguti algebras}
\author{Nobuyoshi Takahashi}
\address{
Department of Mathematics, Graduate School of Science, Hiroshima University, 
1-3-1 Kagamiyama, Higashi-Hiroshima, 739-8526 JAPAN}
\email{tkhsnbys@hiroshima-u.ac.jp}
\subjclass[2010]{Primary 22A30; Secondary 14M17; 17D99; 22F30}
\keywords{Quandles; Regular $s$-manifold; Lie-Yamaguti algebra; Lie triple system; Representation}
\begin{document}

\begin{abstract}
We study quandle modules 
over quandle spaces $Q$, i.e. quandles endowed with geometric structures. 
In the case $Q$ is a regular $s$-manifold, we exhibit 
how modules over $Q$  are related with representations of Lie-Yamaguti algebras. 
\end{abstract}

\maketitle

\section{Introduction}

The notion of a quandle was introduced by Joyce and Matveev 
as a tool for defining invariants of knots and links 
(\cite{Joyce1982}, \cite{Matveev1982}). 
Further invariants were supplied 
by the use of quandle cohomology theory (\cite{CJKLS2003}). 
While the quandle cohomology with the constant coefficient group is already useful, 
the natural framework for the cohomology theory is given 
by quandle modules(\cite{AG2003} and \cite{Jackson2005}). 
In addition to their usefulness in such applications, 
it is expectted that modules are important in understanding a quandle, 
just as in group theory and ring theory, and so on. 

Topological quandles(\cite{Rubinsztein2007}), 
smooth quandles(\cite{Ishikawa2018}) 
and quandle varieties(\cite{Takahashi2016}) 
were defined as quandles with the structure of 
a topological space, smooth manifold, and algebraic variety, respectively. 
We will refer to them as quandle spaces. 
As we will explain later, 
quandle spaces can also be regarded as generalizations of symmetric spaces. 

For a topological quandle $Q$, 
the notion of a module over $Q$ was defined in \cite{EM2016}, 
and it readily generalizes to other quandle spaces(Definition \ref{def_modules_sp}). 
The objective of this paper is to study modules over a quandle space $Q$, 
and especially, 
give an infinitesimal description of certain modules 
in the case $Q$ is a regular $s$-manifold, 
which is a special, but in a sense ``generic,'' kind of quandle space.

\subsection{Quandles, $\Phi$-spaces, regular $s$-manifolds and quandle spaces}

A quandle is a set $Q$ endowed with a binary operation $\rhd$ 
satisfying ``Quandle Axioms''(see Definition \ref{def_quandle}). 
Roughly, the axioms say that the map $s_x: Q\to Q; y\mapsto x\rhd y$ 
is a kind of symmetry around $x$, although it is not required to be involutive. 

It has already been noted by Joyce that 
symmetric spaces can be regarded as quandles, 
by taking $s_x$ to be the geodesic involution. 
In fact, before the quandles attracted attention from the knot theory, 
Loos (\cite{Loos1967}, \cite{Loos1969}) 
reinterpreted a symmetric space as a manifold with a binary operation, 
and this lead to the theories of regular $s$-manifolds and $\Phi$-spaces, 
which are generalizations of symmetric spaces (see \cite{Kowalski1980}, \cite{Fedenko1977}). 

A regular $s$-manifold is, in our terminology, 
a smooth quandle for which $1-d_xs_x\in\mathrm{End}(T_xQ)$ is invertible for any $x\in Q$. 
Among the most important theoretical results on regular $s$-manifolds $Q$ 
is the fact that 
$Q$ can be described as a homogeneous space $G/H$, 
and that the operation $\rhd$ comes from an automorphism $\Phi$ of $G$ 
(\cite{Kowalski1980}, \cite{Fedenko1977}). 
Such a triplet $(G, H, \Phi)$ or $G/H$ is called a $\Phi$-space.

\subsection{Modules over quandles}

Just as in group theory and ring theory, 
it seems important to consider modules over quandles. 
As an example of their use, a module over a quandle $Q$ 
corresponds to a certain extension of $Q$, 
providing a building block for constructing and classifying quandles. 
Furthermore, quandle modules provide a natural setting 
for the cohomology theory. 

The general notion of a quandle module 
was introduced and studied in \cite{AG2003} and \cite{Jackson2005} 
(Definition \ref{def_modules}). 
One feature of the definition of \cite{Jackson2005} is 
that we consider a disjoint union $\coprod_{x\in Q}A_x$ 
of different modules $A_x$ for different $x\in Q$, 
while the case of \cite{AG2003} 
(and the more naive definition as modules over the associated group)
corresponds to the case of the direct product $Q\times A$.

For a topological quandle $Q$, 
the notion of a module over $Q$ was given in \cite{EM2016}. 
We can easily generalize this definition to smooth, complex analytic and algebraic settings 
(Definition \ref{def_modules_sp}). 
One variant of a module is given by a certain space $\mathcal{A}=\coprod_{x\in Q}A_x$ 
where $A_x$ are additive groups. 
If $A_x$ are vector spaces and the module structure respects the linear structure, 
we call $\mathcal{A}$ a linear quandle module. 
In the algebraic case, there is another variant where we take $\mathcal{A}$ to be a coherent sheaf. 
The fibers $A_x$ do not have to be isomorphic to each other in general, 
but in the transitive case (see Definition \ref{def_quandle_auto}) they are, 
and we can think of $\mathcal{A}$ as a fiber bundles over $Q$.

Later in this paper, we will focus on connected regular $s$-manifolds $Q$. 
Our main result gives a description of certain linear quandle modules over $Q$ 
in terms of infinitesimal data.

\subsection{Lie-Yamaguti algebras, infinitesimal $s$-manifolds and their representations}

For a regular $s$-manifold $Q$ and $q\in Q$, 
we can endow $T=T_qQ$ with the structure 
of an ``infinitesimal $s$-manifold,''
just as the tangent space of a Lie group at the identity is a Lie algebra. 

Let us describe the correspondence in more details. 
As we stated above, 
a regular $s$-manifold $Q$ can be considered as a homogeneous space $G/H$, 
and this is a reductive homogeneous space, 
defined and studied by Nomizu(\cite{Nomizu1954}). 
Given a reductive homogeneous space, 
Nomizu considered certain tensors on its tangent space at a point. 
According to a reformulation by Yamaguti(\cite{Yamaguti1958}), 
$T$ is equipped with 
a binary operation $*$ and a ternary operation $[\ ]$ 
which make $T$ a ``general Lie triple system,'' 
or a Lie-Yamaguti algebra according to the terminology of \cite{KW01}. 

The binary operation on $Q$ furthermore defines an automorphism $\sigma$ of $T$ 
satisfying certain conditions, 
and $(T, \sigma)$ is what \cite{Kowalski1980} calls an infinitesimal $s$-manifold. 
It has been known that local isomorphism classes of regular $s$-manifolds 
are in one-to-one correspondence 
with isomorphism classes of infinitesimal $s$-manifolds 
(\cite[Chapter III]{Kowalski1980}, see Theorem \ref{thm_reg_s_mfd}).

In \cite{Yamaguti1969}, 
Yamaguti studied representations of Lie-Yamaguti algebras 
and their cohomology groups. 
A representation of a Lie-Yamaguti algebra $T$ 
is given by a quadruplet $(V, \rho, \delta, \theta)$ 
of a vector space $V$, a linear map $\rho: T\to\mathrm{End}(V)$ 
and bilinear maps $\delta, \theta: T\times T\to\mathrm{End}(V)$ 
(see Definition \ref{def_rly}).

\subsection{Main result}

In this paper, we define a representation of an infinitesimal $s$-manifold $(T, \sigma)$ 
to be a pair $(V, \psi)$ where $V$ is a representation of $T$ 
and $\psi\in\mathrm{GL}(V)$ is an invertible linear map 
satisfying certain conditions (Definition \ref{def_rism}). 

Then our main theorem is as follows. 
In the following, $Q$ can be a smooth manifold, a complex analytic manifold 
or an algebraic variety over $\mathbb{C}$. 
A regular module over $Q$ is a linear quandle module 
satisfying a ``nondegeneracy'' condition (see Definition \ref{def_regular_modules}), 
and homomorphisms are bundle maps compatible with the module structures. 

\begin{theorem}(=Theorem \ref{thm_main})
Let $Q$ be a connected regular $s$-manifold and $q\in Q$ a point. 
Write $\mathbb{K}$ for the base field. 

(1)
Given a regular quandle module $\mathcal{A}$ over $Q$, 
there is a natural structure of a regular representation of $(T_qQ, d_qs_q)$ on $\mathcal{A}_q$. 

(2)
For regular quandle modules $\mathcal{A}$ and $\mathcal{A}'$ over $Q$ 
and a homomorphism $F: \mathcal{A}\to\mathcal{A}'$, 
$F_q$ is a homomorphism of representations of $(T_qQ, d_qs_q)$. 
This correspondence gives an injective map 
between the sets of homomorphisms. 
In the smooth or complex analytic case, 
this map is surjective if $Q$ is simply-connected. 

(3)
Assume that $Q$ is a connected, simply-connected smooth or complex analytic 
regular $s$-manifold. 
Given a regular representation $V$ of $(T_qQ, d_qs_q)$, 
there exists a regular quandle module $\mathcal{A}$ over $Q$ 
such that $\mathcal{A}_q$ is isomorphic to $V$. 

\smallbreak
In other words, there is a faithful functor between the following categories: 
\begin{itemize}
\item
The category $\mathbf{Mod}_{\mathbb{K}}^r(Q)$ 
of regular quandle modules $(\mathcal{A}, \eta, \tau)$ over $(Q, \rhd)$. 
\item
The category $\mathbf{Rep}^r(T_qQ, d_qs_q)$ 
of regular representations $(V, \rho, \delta, \theta, \psi)$ 
of the infinitesimal $s$-manifold $(T_qQ, *, [\ ], d_qs_q)$. 
\end{itemize}
In the smooth or complex analytic case, 
if $Q$ is simply-connected, it is an equivalence. 
\end{theorem}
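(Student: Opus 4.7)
The plan is to mimic the classical passage from Lie group representations to Lie algebra representations, using the $\Phi$-space description of $Q$ as a reductive homogeneous space $G/H$ guaranteed by Theorem~\ref{thm_reg_s_mfd}. The two guiding parallels are (i) the Nomizu--Yamaguti construction, which reads off the Lie-Yamaguti operations on $T_qQ$ from the components of the Lie bracket of $\fg=\fh\oplus\fm$ with $\fm\cong T_qQ$, and (ii) the classical fact that a $G$-equivariant vector bundle on $G/H$ is determined by the $H$-representation on its fiber over $q$. A regular quandle module should be a geometric analogue of the former and an enrichment of the latter.

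For part (1), I would begin by observing that regularity (the nondegeneracy condition) together with connectedness and the transitivity of $\Inn(Q)$ force $\mathcal{A}\to Q$ to be a locally trivial bundle of vector spaces. Writing the module structure locally as maps $\eta_{x,y}:A_y\to A_{x\rhd y}$ (the linear part) and $\tau_{x,y}:A_x\to A_{x\rhd y}$ (the translational part), I define $\psi=\eta_{q,q}\in\mathrm{GL}(A_q)$ and extract $\rho,\delta,\theta$ from the Taylor expansions of $\tau$ and $\eta$ at $(q,q)$, identifying nearby fibers with $A_q$ through a local trivialization compatible with the $G$-action. The axioms of a representation of the infinitesimal $s$-manifold $(T_qQ,d_qs_q)$ then emerge by differentiating the quandle module axioms once (for the $\rho$-axiom and the $\psi$-compatibility) or twice (for the $\delta,\theta$-axioms), exactly paralleling how the Yamaguti identities for $*$ and $[\ ]$ fall out of the Jacobi identity on $\fg$.

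For part (2), injectivity of $F\mapsto F_q$ is a soft statement: a bundle homomorphism $F:\mathcal{A}\to\mathcal{A}'$ of quandle modules must intertwine every $\eta_{x,y}$ and $\tau_{x,y}$; since $\Inn(Q)$ acts transitively on the connected $Q$ and this action lifts to $\mathcal{A}$ via the structure maps, the value of $F$ on $A_q$ propagates to every fiber. Surjectivity, and part (3), are the genuine integration problem.

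For parts (2)-surjectivity and (3), I would invert the construction of part (1). A regular representation $(V,\rho,\delta,\theta,\psi)$ reassembles, via the Nomizu recipe in reverse, into a representation of $\fg$ on $V$ that restricts correctly on $\fh$ and is compatible with the differential of $\Phi$. When $Q$ is simply-connected, one replaces $G$ by an appropriate simply-connected cover so that this $\fg$-representation integrates to a $G$-representation intertwining $\Phi$; the associated bundle $G\times_HV\to G/H=Q$ then carries a canonical regular quandle module structure whose fiber at $q$ recovers $V$. The main obstacle, and the step where simply-connectedness is truly used, is verifying that a regular representation of the infinitesimal $s$-manifold integrates to a $G$-representation with the correct $\Phi$-compatibility, rather than merely a representation of a Lie algebra; the monodromy obstruction to this integration is exactly what simply-connectedness of $Q$ kills. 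This is also the structural reason that part (3) excludes the algebraic case: even when $Q$ is algebraically simply-connected, the $\fg$-representation need not integrate to an algebraic $G$-representation, so the converse construction is not available in full generality.
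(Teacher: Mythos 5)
Your outline of parts (1) and (2)-injectivity is workable and close in spirit to the paper: the paper also obtains the representation on $\mathcal{A}_q$ by ``differentiating,'' but it does so structurally, by making the total space $\mathcal{A}$ into a regular $s$-manifold (Proposition \ref{prop_quandle_space_from_module}), applying the functor $(Q,q)\mapsto T_qQ$ of Theorem \ref{thm_hom}/Corollary \ref{cor_functor} to the projection, zero section, addition and inversion, and then invoking the purely algebraic equivalence between abelian group objects over $(T_qQ,d_qs_q)$ and regular representations (Proposition \ref{prop_ly_ext}, Corollary \ref{cor_eq_ly_ext}). Your fiberwise propagation argument for injectivity is a legitimate elementary substitute for the paper's appeal to the rigidity statement in Theorem \ref{thm_hom}(1). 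But you leave the Taylor-expansion verification of (RLY1)--(RLY6) and (RISM1)--(RISM3) entirely unspecified, and this is precisely the content that Proposition \ref{prop_ly_ext} supplies.

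The genuine gap is in part (3) (and hence in (2)-surjectivity). A regular representation $(V,\rho,\delta,\theta,\psi)$ does \emph{not} reassemble into a Lie algebra representation of $\fg=\fm\oplus\fh$ on $V$: the natural candidate $(x,0)\mapsto\rho(x)$, $(0,D_{x,y})\mapsto\delta(x,y)$ fails to be a Lie algebra homomorphism, since (RLY1) gives $[\rho(x),\rho(y)]=\rho(x*y)+\delta(x,y)+\theta(x,y)-\theta(y,x)$, and the term $\theta(x,y)-\theta(y,x)$ does not vanish in general (one must also check that $\delta$ even descends to a well-defined map on $\fh(T)$). Consequently the associated-bundle construction $G\times_H V$ is not available as stated, and even if one had a $G$-equivariant bundle it would only capture $\eta$ restricted to the $G$-action, not the full quandle operation on the total space encoding $\tau$ and $\theta$. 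The paper's own Remark after Proposition \ref{prop_group_hom_extended} (the $\PSL(2,\bC)$ versus $\SL(2,\bC)$ example) shows that the transvection group of the total space need not receive a homomorphism from $G$ at all, so no single cover of $G$ will act on both $Q$ and $\mathcal{A}$ compatibly in the way your construction requires. The paper instead integrates the split extension $T_1=T\oplus V$ of infinitesimal $s$-manifolds to a simply-connected regular $s$-manifold $\mathcal{A}$ via Theorem \ref{thm_reg_s_mfd}(5), and then must prove several nontrivial claims: that the orbits of the normal subgroup integrating $V\oplus\fh(V,T_1)$ are exactly the fibers of $\Pi$, that the $V$-action on a fiber is \emph{simply} transitive (ruling out a quotient of $V$ by a lattice --- this uses the integrated scalar multiplications $M_\lambda$, a point your sketch never touches), and that $\mathcal{A}_2\cong\mathcal{A}\times_Q\mathcal{A}$ so that the addition map actually lives on the fiber product. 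These steps, not a monodromy obstruction for integrating a $\fg$-representation, are where simple-connectedness does its work.
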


There are a couple of ingredients in the proof of this theorem. 

We are concerned with two categories, 
one of regular $s$-manifolds and the other of infinitesimal $s$-manifolds. 
In each of them, 
we have a correspondence between linear modules or representations 
and certain ``abelian group objects.'' 
Thus a functor from the category of (pointed) regular $s$-manifolds 
to that of infinitesimal $s$-manifolds 
would associate a representation to a module. 
In fact, the assignment $\mathcal{T}: Q\mapsto T_qQ$ extends to a functor 
(Corollary \ref{cor_functor}), 
i.e. a quandle homomorphism induces a homomorphism of infinitesimal $s$-manifolds 
(Theorem \ref{thm_hom}), 
but the proof of this fact is not so straightforward. 
It might be possible to prove this by combining results from \cite{Fedenko1977}, 
but we will give a proof in this paper 
since the author could not fully understand the arguments there 
and \cite{Fedenko1977} is not easily available. 

The functor $\mathcal{T}$ is essentially surjective and faithful, 
and its restriction to connected and simply-connected regular $s$-manifolds 
is an equivalence (Corollary \ref{cor_functor}). 
Thus, if $Q$ is simply-connected and $V$ is a representation of $T$, 
we can find a simply-connected regular $s$-manifold $\mathcal{A}$ over $Q$. 
We can show that the fibers of $\mathcal{A}\to Q$ are isomorphic to $V$ as manifolds. 
It follows that $\mathcal{A}\times_Q\mathcal{A}$ is also simply-connected, 
and we have an addition map $\mathcal{A}\times_Q\mathcal{A}\to \mathcal{A}$. 
Together with the scalar multiplication maps, 
this makes $\mathcal{A}$ a vector bundle over $Q$ with a quandle structure.

\subsection{Plan of the paper}

In Section 2, 
after a short account on the definition of racks and quandles and some related notions and facts, 
we define rack and quandle spaces, 
and regular $s$-manifolds in particular. 
Then we describe how regular $s$-manifolds and their homomorphisms are related with 
$\Phi$-spaces, Lie-Yamaguti algebras and infinitesimal $s$-manifolds 
and their homomorphisms. 

In Section 3, 
we recall from \cite{Jackson2005} the notion of a rack or quandle module over a rack or quandle $X$ 
and its relation with an abelian group object in the category of quandles over $X$. 
Then we introduce rack and quandle modules over a rack or quandle space. 
A couple of interesting examples of modules will also be given. 
In Section 4, 
a corresponding theory for representations is developed. 

The main theorem will be stated and proven in Section 5.

\section{Quandle spaces, Lie-Yamaguti algebras and their homomorphisms}

We recall basic definitions related to quandles(\cite{Joyce1982}). 
In this paper, we use the convention where $X$ or $Q$ ``acts'' on itself from the left. 

\begin{definition}\label{def_quandle}
A \emph{rack} is a set $X$ endowed with a binary operation $\rhd$ 
satisfying the following conditions. 
\begin{enumerate}
\item
For any $x, y\in X$, 
there exists a unique element $y'\in X$ such that $x\rhd y'=y$. 
\item
For any $x, y, z\in X$, 
$x\rhd(y\rhd z)=(x\rhd y)\rhd(x\rhd z)$. 
\end{enumerate}
We define $s_x: X\to X$ by $s_x(y)=x\rhd y$, 
which is bijective by (1). 
We write $x\rhd^{-1} y$ for $s_x^{-1}(y)$. 

We call $(X, \rhd)$ a \emph{quandle} if it additionally satisfies the following condition: 
\begin{enumerate}
\item[(3)]
For any $x\in X$, $x\rhd x=x$ holds. 
\end{enumerate}
\end{definition}

\begin{definition}
Let $X$ and $X'$ be racks or quandles. 
A \emph{homomorphism} from $X$ to $X'$ is a map $F: X\to X'$ 
such that $F(x\rhd y)=F(x)\rhd F(y)$ holds for any $x, y\in X$. 

An \emph{isomorphism} is a homomorphism which admits an inverse homomorphism. 
An isomorphism onto itself is called an \emph{automorphism}. 
\end{definition}

The following is immediate from the definitions. 
\begin{proposition}
(1) 
If $F: X\to X'$ is a homomorphism of racks, 
$F(x\rhd^{-1}y)=F(x)\rhd^{-1}F(y)$ holds. 

(2)
A homomorphism is an isomorphism if and only if it is bijective. 

(3)
For any $x\in X$, the map $s_x$ is an automorphism of $X$. 
\end{proposition}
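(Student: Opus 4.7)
The plan is to unwind each statement directly from the axioms in Definition \ref{def_quandle}; all three parts are one-step manipulations, so this is really just bookkeeping.

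For part (1), I would set $z = x\rhd^{-1}y$, meaning $x\rhd z = y$ by definition. Applying the homomorphism $F$ gives $F(x)\rhd F(z) = F(y)$, and the uniqueness clause in rack axiom (1) applied inside $X'$ then forces $F(z) = F(x)\rhd^{-1}F(y)$, which is the desired identity.

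For part (2), the forward direction is immediate from the definition of an isomorphism. For the converse, given a bijective homomorphism $F$, I would verify that the set-theoretic inverse $F^{-1}$ respects $\rhd$: writing any $x', y'\in X'$ as $F(x), F(y)$, one has $F(F^{-1}(x')\rhd F^{-1}(y')) = F(x)\rhd F(y) = x'\rhd y'$, and applying $F^{-1}$ to both sides yields $F^{-1}(x'\rhd y') = F^{-1}(x')\rhd F^{-1}(y')$.

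For part (3), bijectivity of $s_x$ is precisely rack axiom (1), while the homomorphism property $s_x(y\rhd z) = s_x(y)\rhd s_x(z)$ is a verbatim restatement of rack axiom (2). Combined with part (2), this shows $s_x$ is an automorphism. There is no real obstacle in this proposition; as the paper itself remarks, the statement follows immediately from the definitions, and the only ``choice'' to make is in which order to present the three verifications.
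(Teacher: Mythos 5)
Your proof is correct and is exactly the routine verification the paper has in mind: the paper gives no proof at all, stating only that the proposition ``is immediate from the definitions,'' and your three one-step arguments (uniqueness in axiom (1) for part (1), checking the set-theoretic inverse for part (2), and reading off axioms (1) and (2) for part (3)) supply precisely that. Nothing is missing.
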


\begin{definition}\label{def_quandle_auto}
Let $X$ be a rack. 

(1)
The \emph{automorphism group} $\mathrm{Aut}_\rhd (X)$ of $X$ 
is the set of all automorphisms of $X$. 

(2)
The \emph{group of inner automorphisms} of $X$ is the subgroup 
\[
\mathrm{Inn} (X):=\langle s_x\mid x\in X\rangle
\]
of $\mathrm{Aut}_\rhd(X)$. 

(3)
The \emph{group of transvections} of $X$ is the subgroup 
\[
\mathrm{Tr} (X):=\{s_{x_1}^{e_1}\cdots s_{x_k}^{e_k} \mid x_i\in X, 
\sum e_i=0\}
\]
of $\mathrm{Inn}_\rhd(X)$. 

(4)
A rack $X$ is called \emph{transitive}
if the action of $\mathrm{Inn} (X)$ on $X$ is transitive. 
It is often called ``connected.'' 
\end{definition}

\begin{remark}
(1)
The condition for $F: X\to X$ to be a homomorphism 
can be expressed as $F s_x=s_{F(x)}F$ for every $x\in X$.  
In particular, for $g\in\mathrm{Aut}_\rhd(X)$ 
we have $g s_x g^{-1}=s_{g(x)}$, 
and we see that $\mathrm{Inn}(X)$ and $\mathrm{Tr}(X)$ are 
normal subgroups of $\mathrm{Aut}_\rhd(X)$. 

(2)
We have in particular 
$s_xs_y=s_{x\rhd y}s_x$ for $x, y\in X$, 
and it follows that $\mathrm{Tr}(X)$ can also be described as the subgroup 
generated by, say, $\{s_xs_{x_0}^{-1}\mid x\in X\}$ for a fixed $x_0$. 

(3)
If $X$ is a quandle, 
then $X$ is transitive if and only if 
the action of $\mathrm{Tr}(X)$ on $X$ is transitive: 
If an element of $X$ is written as $s_{x_1}^{e_1}\cdots s_{x_k}^{e_k}(x_0)$, 
then it can also be expressed 
as $s_{x_1}^{e_1}\cdots s_{x_k}^{e_k}s_{x_0}^{-\sum e_i}(x_0)$. 

This does not hold for a rack in general, as seen from the case 
where $X=\mathbb{Z}$ and $x\rhd y=y+1$. 
\end{remark}

There is a description of transitive quandles 
in terms of groups and their automorphisms. 

\begin{definition}\label{def_conj}
For an element $h$ of a group $G$ 
and a normal subgroup $N$ of $G$, 
denote by $\mathrm{conj}_N(h)$ the automorphism of 
$N$ defined by $g\mapsto hgh^{-1}$. 
\end{definition}

\begin{proposition}[Theorem 7.1 of \cite{Joyce1982}]
\label{prop_conn_quandle}
(1)
If $G$ is group, $\Phi$ is an automorphism of $G$ 
and $H$ is contained in the subgroup $G^\Phi$ consisting of elements fixed by $\Phi$, 
then the operation $xH\rhd_\Phi yH:=x\Phi(x^{-1}y)H$  on $G/H$ 
is well-defined and satisfies the quandle axioms. 

(2)
For a transitive quandle $Q$ and $q\in Q$, 
let $G$ be $\mathrm{Aut}_\rhd(Q)$, $\mathrm{Inn}(Q)$ or $\mathrm{Tr}(Q)$ 
and $\Phi=\mathrm{conj}_G(s_q)$. 

Then the stabilizer $G_q$ of $q$ 
is contained in $G^\Phi$, 
and the natural map 
\[
G/G_q\to Q; \quad gG_q\mapsto gq 
\]
gives an isomorphism of quandles 
$(G/G_q, \rhd_\Phi)$ and $Q$. 
\end{proposition}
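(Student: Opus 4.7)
My plan is to verify the two parts by direct computation, leveraging the hypothesis $H \subseteq G^\Phi$ (i.e.\ $\Phi|_H = \mathrm{id}_H$) throughout.

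For part (1), I would first check well-definedness: replacing $x$ by $xh_1$ and $y$ by $yh_2$ with $h_i \in H$ gives
\[
xh_1\Phi(h_1^{-1}x^{-1}yh_2)H = xh_1 \cdot h_1^{-1}\Phi(x^{-1}y)h_2 \cdot H = x\Phi(x^{-1}y)H,
\]
using $\Phi(h_i) = h_i$ and $h_2 \in H$. The three quandle axioms then follow one by one. The reflexivity axiom $xH \rhd_\Phi xH = xH$ is immediate from $\Phi(e) = e$. For existence and uniqueness of $y'$ with $xH \rhd_\Phi y'H = yH$, the equation rearranges to $y'H = x\Phi^{-1}(x^{-1}y)H$, which is well-defined because $\Phi(H) = H$. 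For self-distributivity, a straightforward expansion of both sides, using only that $\Phi$ is a group homomorphism, produces the common expression
\[
x\,\Phi(x^{-1}y)\,\Phi^{2}(y^{-1}z)\,H.
\]

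For part (2), I would first verify $G_q \subseteq G^\Phi$. Any $g \in G_q$ is a quandle automorphism fixing $q$, so $g s_q g^{-1} = s_{g(q)} = s_q$, whence $\Phi(g) = s_q g s_q^{-1} = g$. Since $Q$ is transitive, each of the three candidate groups $G$ acts transitively on $Q$ (for $G = \mathrm{Tr}(Q)$ this uses the observation in the remark after Definition \ref{def_quandle_auto}, absorbing extra factors of $s_{q}$ to make the total exponent zero), so the orbit map $\varphi: G/G_q \to Q$, $gG_q \mapsto gq$, is a bijection. Using $s_q(q) = q$ (hence $s_q^{-1}(q) = q$) together with $s_{gq} = g s_q g^{-1}$, I would then compute
\[
\varphi(gG_q \rhd_\Phi hG_q) = g s_q g^{-1} h s_q^{-1}(q) = g s_q g^{-1} h(q) = s_{gq}(h(q)) = \varphi(gG_q) \rhd \varphi(hG_q),
\]
confirming that $\varphi$ is a quandle isomorphism.

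The entire proposition is essentially a bookkeeping exercise and I do not anticipate any serious obstacle; the most delicate step is keeping track of the iterated $\Phi$ in the self-distributivity verification of (1), which is nevertheless mechanical once one recognizes the pattern $\Phi^2$ emerging on both sides.
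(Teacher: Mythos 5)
Your proof is correct, and the computations (well-definedness, the $\Phi^2$ pattern in self-distributivity, $G_q\subseteq G^\Phi$ via $gs_qg^{-1}=s_{g(q)}=s_q$, and the equivariance of the orbit map) all check out, including the case $G=\mathrm{Tr}(Q)$ where $s_q$ need not lie in $G$ but conjugation by it is still an automorphism. The paper itself gives no proof, citing Theorem 7.1 of Joyce's paper instead, and your direct verification is exactly the standard argument.
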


Now let us consider quandles equipped with geometric structures. 
In the setting of algebraic geometry, 
a variety will mean a reduced separated algebraic scheme 
over an algebraically closed field $k$. 

\begin{definition}
A \emph{topological quandle} (resp. a \emph{smooth quandle}, 
a \emph{complex analytic quandle} or a \emph{quandle variety} over $k$) 
is a topological space (resp. a smooth manifold, a complex analytic space 
or  an algebraic variety over $k$) $Q$ 
equipped with a quandle operation $\rhd$ 
such that $Q\times Q\to Q\times Q; (q, r)\mapsto (q, q\rhd r)$ 
is a homeomorphism (resp. a diffeomorphism, a biholomorphic map 
or a biregular map). 
\end{definition}

For topologically connected and transitive quandles, 
we have the following description. 
The smooth case is due to \cite{Ishikawa2018} 
and the algebraic case to \cite{Takahashi2016}. 
The complex analytic case is similar to the smooth case. 

\begin{theorem}
Let $Q$ be a connected and transitive smooth quandle 
(resp. complex analytic quandle 
or quandle variety over an algebraically closed field of characteristic $0$). 

Then $\mathrm{Inn}(Q)$ has a natural structure of a Lie group 
(resp. a complex Lie group or 
an algebraic group with possibly countably many connected components) 
and $\mathrm{Tr}(Q)$ is the connected component of $\mathrm{Inn}(Q)$ 
at the identity element. 

For $G=\mathrm{Inn}(Q)$ or $\mathrm{Tr}(Q)$, 
the action of $G$ on $Q$ is of class $C^\infty$ (resp. holomorphic or regular). 
Consequently, 
for any choice of $q\in Q$, 
there is a natural isomorphism 
between $(G/G_q, \rhd_{\mathrm{conj}_G(s_q)})$ and $Q$ 
of smooth quandles (resp. complex analytic quandles or quandle varieties). 
\end{theorem}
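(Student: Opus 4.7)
The plan is to reduce to the cited results of Ishikawa \cite{Ishikawa2018} (smooth case) and \cite{Takahashi2016} (algebraic case), and to observe that the complex analytic case runs parallel to the smooth one. Proposition \ref{prop_conn_quandle} already furnishes the purely quandle-theoretic identification $Q\cong G/G_q$; the remaining task is to upgrade it to an isomorphism in the appropriate geometric category, which boils down to endowing $\mathrm{Inn}(Q)$ with a finite-dimensional geometric group structure compatible with the action on $Q$.

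I would begin with $\mathrm{Tr}(Q)$. By the remark following Definition \ref{def_quandle_auto}, for any fixed $x_0\in Q$ this group is generated by $\{s_x s_{x_0}^{-1}\mid x\in Q\}$. The assignment $x\mapsto s_x$ is smooth (resp.\ holomorphic, regular) by the defining axiom of a quandle space, giving an ascending chain of images of $Q^n\to\mathrm{Aut}_\rhd(Q)$ sending $(x_1,\ldots,x_n)$ to $\prod_i s_{x_i} s_{x_0}^{-1}$. In the algebraic setting one works in the constructible topology: since $\dim Q<\infty$, the chain stabilizes, and its union, being closed under multiplication and inversion, inherits an algebraic group structure. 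In the smooth case, a Palais-type argument produces a finite-dimensional Lie group from this smooth family of diffeomorphisms; the complex analytic case proceeds identically with holomorphic charts in place of smooth ones.

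Once $\mathrm{Tr}(Q)$ carries its geometric group structure, quandle transitivity of $Q$ together with the generation by $s_x s_{x_0}^{-1}$ yields a transitive, smooth/holomorphic/regular action on $Q$, and standard orbit theorems provide a geometric isomorphism $\mathrm{Tr}(Q)/\mathrm{Tr}(Q)_q\cong Q$. To pass from $\mathrm{Tr}(Q)$ to $\mathrm{Inn}(Q)$, one uses that $\mathrm{Inn}(Q)=\langle \mathrm{Tr}(Q), s_{x_0}\rangle$ with $\mathrm{Tr}(Q)$ normal, so $\mathrm{Inn}(Q)/\mathrm{Tr}(Q)$ is cyclic; at most countably many cosets inherit the geometric structure by translation by a power of $s_{x_0}$.

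The identification of $\mathrm{Tr}(Q)$ with the identity component uses a connectedness argument adapted to each category: in the smooth and holomorphic cases, for $x\in Q$ a path $\gamma$ from $x_0$ to $x$ in the connected space $Q$ gives a continuous path $t\mapsto s_{\gamma(t)}s_{x_0}^{-1}$ from the identity to the generator $s_x s_{x_0}^{-1}$ of $\mathrm{Tr}(Q)$; in the algebraic case, $\mathrm{Tr}(Q)$ is a union of images of irreducible varieties all meeting at the identity, hence connected. The isomorphism $(G/G_q,\rhd_{\mathrm{conj}_G(s_q)})\cong Q$ then upgrades from Proposition \ref{prop_conn_quandle} automatically, since both sides carry the established geometric structure and the quandle operations are defined by geometric morphisms. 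The main obstacle I anticipate is the a priori construction of the finite-dimensional Lie/algebraic structure on $\mathrm{Tr}(Q)$ — in the smooth and complex analytic settings one must carefully avoid infinite-dimensional pathologies in the ambient transformation group, which is precisely where Ishikawa's argument and its holomorphic analogue do the real work.
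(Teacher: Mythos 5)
The paper does not prove this theorem at all: it is quoted from \cite{Ishikawa2018} (smooth case) and \cite{Takahashi2016} (algebraic case), with the remark that the complex analytic case is parallel. So there is no in-paper argument to compare against, and your proposal, which likewise defers the decisive step to those references, is at the same level of completeness as the paper's treatment. Your surrounding scaffolding is sound: the generation of $\mathrm{Tr}(Q)$ by $\{s_xs_{x_0}^{-1}\}$, the cyclic quotient $\mathrm{Inn}(Q)/\mathrm{Tr}(Q)$ explaining the countably many components, the path $t\mapsto s_{\gamma(t)}s_{x_0}^{-1}$ showing the generators lie in the identity component, and the appeal to the orbit theorem (which in characteristic $0$, resp.\ with second countability, does upgrade the bijection of Proposition \ref{prop_conn_quandle} to a geometric isomorphism) all match what one would do.

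If you intend this as a self-contained proof rather than a reduction to the literature, the gap is exactly where you locate it, and it is larger than your sketch suggests. In the algebraic case, ``images of $Q^n\to\mathrm{Aut}_\rhd(Q)$'' is not yet meaningful, since the automorphism group of a variety is not a variety; one must instead work with the evaluation maps $Q^n\times Q\to Q$ (or graphs in $Q\times Q$), prove that the relevant fibers give a constructible family stabilizing by dimension, and then invoke something like Weil's group-chunk theorem to manufacture the algebraic group structure together with the regularity of multiplication and inversion --- ``being closed under multiplication and inversion, inherits an algebraic group structure'' is not automatic. In the smooth case the ``Palais-type argument'' needs the finite-dimensionality of the Lie algebra of vector fields generated by the family $\{s_x\}$, which is the real content of Ishikawa's proof and is not a formal consequence of smoothness of $(q,r)\mapsto(q,q\rhd r)$. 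You are explicit that these are exactly the points where the cited works do the work, so the proposal is honest; just be aware that as written it is a programme keyed to the references, not an independent proof.
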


For a special kind of smooth quandles, called regular $s$-manifolds, 
many results including the above one have been known since 1970's. 

\begin{definition}(\cite[Definition II.2]{Kowalski1980})
A \emph{regular $s$-manifold} is a smooth quandle $Q$ 
such that $id_{T_qQ}-d_qs_q\in\mathrm{End}(T_qQ)$ is invertible 
for any $q\in Q$. 
\end{definition}

This was based on a reinterpretation of the notion of a symmetric space by Loos 
as a manifold with a binary operation. 

It has also been known that the tangent space of a regular $s$-manifold $Q$ 
at a point can be endowed with the structure of a certain kind of algebra, 
and that $Q$ is locally determined by this algebra.

\begin{definition}
(\cite[\S1]{Yamaguti1969})
(1)
A \emph{Lie-Yamaguti algebra} over a field $k$ is a triplet $(T, *, [\ ])$, 
where $T$ is a finite dimensional $k$-vector space, 
$*: T\times T\to T$ is a bilinear operation, 
and $[\ ]: T\times T\times T\to T$ is a trilinear operation, 
such that the following hold for any $x, y, z, v, w\in T$. 
\begin{enumerate}
\item[(LY1)]
$x*x=0$. 
\item[(LY2)]
$[x, x, y]=0$. 
\item[(LY3)]
$[x, y, z]+[y, z, x]+[z, x, y]+(x*y)*z+(y*z)*x+(z*x)*y=0$. 
\item[(LY4)]
$[x*y, z, w]+[y*z, x, w]+[z*x, y, w]=0$. 
\item[(LY5)]
$[x, y, z*w]=[x, y, z]*w+z*[x, y, w]$. 
\item[(LY6)]
$[x, y, [z, v, w]]=[[x, y, z], v, w]+[z, [x, y, v], w]+[z, v, [x, y, w]]$. 
\end{enumerate}
For $x, y\in T$, we define $D_{x, y}\in\mathrm{End}(T)$ by $D_{x, y}(z):=[x, y, z]$. 

(2)
A \emph{homomorphism of Lie-Yamaguti algebras} is 
a linear map $f: T\to T'$ 
satisfying $f(x*y)=f(x)*f(y)$ and $f([x, y, z])=[f(x), f(y), f(z)]$. 
\end{definition}

\begin{remark}
(1)
The notion of a Lie-Yamaguti algebra was defined by Yamaguti 
(\cite[Def. 2.1]{Yamaguti1958}), 
under the name ``general Lie triple system'', 
as the infinitesimal algebra of 
a reductive homogeneous space studied by Nomizu. 
Here we employ the definition from \cite{Yamaguti1969} 
which differs from \cite{Yamaguti1958} by a sign. 
The name ``Lie-Yamaguti algebra'' appeared in \cite{KW01}. 

(2)
In the rest of the paper, 
$k$ will be $\mathbb{R}$ or $\mathbb{C}$. 
In particular, it is of characteristic $0$ and 
therefore (LY1) is equivalent to $x*y=y*x$ for any $x, y\in T$ 
and (LY2) is equivalent to $[x, y, z]=-[y, x, z]$ for any $x, y, z\in T$. 
Note that $[\ ]$ works somewhat differently in the third argument. 

(3)
Using $D_{x, y}$, the conditions (LY2), (LY4), (LY5) and (LY6) 
can be expressed as follows, respectively: 
\begin{itemize}
\item[(LY2)']
$D_{x, x}=0$. 
\item[(LY4)']
$D_{x*y, z}+D_{y*z, x}+D_{z*x, y}=0$. 
\item[(LY5)']
$D_{x, y}(z*w)=D_{x, y}(z)*w+z*D_{x, y}(w)$, i.e. $D_{x, y}$ is a derivation for $*$. 
\item[(LY6)']
$D_{x, y}[z, v, w]=[D_{x, y}(z), v, w] + [z, D_{x, y}(v), w]+[z, v, D_{x, y}(w)]$, 
i.e. $D_{x, y}$ is a derivation for $[\ ]$.
\end{itemize}
\end{remark}

\begin{definition}
(\cite[Definition III.20]{Kowalski1980})
(1)
An \emph{infinitesimal $s$-manifold} 
is a pair $(T, \sigma)$ of a Lie-Yamaguti algebra $T$ 
and a linear map $\sigma: T\to T$ 
satisfying the following. 
\begin{itemize}
\item[(ISM0)]
Both $\sigma$ and $id_T-\sigma$ are invertible. 
\item[(ISM1)]
$\sigma(x * y) = \sigma(x) * \sigma(y)$. 
\item[(ISM2)]
$\sigma([x, y, z]) = [\sigma(x), \sigma(y), \sigma(z)]$. 
\item[(ISM3)]
$\sigma([x, y, z]) = [x, y, \sigma(z)]$. 
\end{itemize}

(2)
A \emph{homomorphism of infinitesimal $s$-manifolds} $(T, \sigma)$ and $(T', \sigma')$ is 
a homomorphism $f: T\to T'$ of Lie-Yamaguti algebras
satisfying $f\circ \sigma=\sigma'\circ f$. 
\end{definition}

\begin{remark}
(1)
The conditions 
(ISM1), (ISM2) and the first half of (ISM0) say that $\sigma$ is an automorphism of $T$. 
The condition (ISM3) (for all $z\in T$) can be expressed as $\sigma(D_{x, y})=D_{x, y}$, 
where $\sigma(f):=\sigma\circ f\circ\sigma^{-1}$ 
for $f\in\mathrm{End}(T)$. 

(2)
In \cite[Definition III.20]{Kowalski1980}, a more geometric term ``tensor'' is employed 
rather than ``operation.'' 
The notations correspond as follows: 
\begin{eqnarray*}
S_0(x) & = & \sigma(x), \\
\tilde{T}_0(x, y) & = & -x * y, \\
\tilde{R}_0(x, y)(z) & = & -[x, y, z]. 
\end{eqnarray*}

The conditions (LY1), (LY2) correspond to 
the condition (iv) in \cite[Proposition III.19]{Kowalski1980} ; 
(LY3) to (v); 
(LY4) to (vi); 
(LY5), (LY6) and (ISM3) to (ii); 
(ISM0) to (i); 
and (ISM1) and (ISM2) to (iii). 
\end{remark}

\begin{theorem}\label{thm_reg_s_mfd}
If $Q$ is a connected regular $s$-manifold and $q\in Q$, 
the following hold. 

(1) (\cite[Proposition II.33]{Kowalski1980})
As a quandle, $Q$ is transitive. 

(2) 
(\cite[Theorem II.32, Proposition II.38]{Kowalski1980})
The group $G=\mathrm{Tr}(Q)$ 
can be endowed with a structure of a connected Lie group 
with a smooth and transitive action on $Q$. 
There is a natural isomorphism between 
$(G/G_q, \rhd_{\mathrm{conj}_G(s_q)})$ and $Q$. 

(3)
(\cite[Theorem III.22]{Kowalski1980})
One can endow $T_qQ$ with natural operations $*$ and $[\ ]$ 
so that $(T_qQ, d_qs_q)$ is an infinitesimal $s$-manifold. 

(4)
(\cite[Proposition III.19]{Kowalski1980})
Regular $s$-manifolds $Q$ and $Q'$ are locally isomorphic 
near $q\in Q$ and $q'\in Q'$
if and only if $(T_qQ, d_qs_q)$ and 
 $(T_{q'}Q', d_{q'}s_{q'})$ are isomorphic. 

(5) (\cite[Theorem III.25]{Kowalski1980})
For any infinitesimal $s$-manifold $(T, \sigma)$, 
there exist a connected and simply-connected regular $s$-manifold $Q$ 
and an isomorphism $(T_qQ, d_qs_q)\cong (T, \sigma)$. 

It is unique up to a unique isomorphism: 
If $Q'$ also satisfies the condition with $q'\in Q'$, 
there is a unique isomorphism of $Q$ onto $Q'$ 
that sends $q$ to $q'$ 
and is compatible with the isomorphisms $T_qQ\cong T$ and $T_{q'}Q'\cong T$. 
\end{theorem}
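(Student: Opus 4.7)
The plan is to prove the five parts in the order stated, since they build on one another, following the Nomizu--Yamaguti--Kowalski program. For (1), differentiating the idempotence identity $y \rhd y = y$ at $q$ yields $d_1{\rhd}(q,q) + d_qs_q = id_{T_qQ}$, so $d_1{\rhd}(q,q) = id - d_qs_q$ is invertible by hypothesis. Hence $\Phi(y) := y \rhd q = s_y(q)$ is a local diffeomorphism near $q$; for $y$ close enough to $q$, setting $y' := \Phi^{-1}(y)$ exhibits the transvection $s_{y'}s_q^{-1} \in \mathrm{Tr}(Q)$ carrying $q$ to $y$. The $\mathrm{Tr}(Q)$-orbit of $q$ is therefore open, and by connectedness of $Q$ equals all of $Q$.

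For (2), the task is to equip the abstract group $\mathrm{Tr}(Q) \subset \mathrm{Diff}(Q)$ with a Lie group structure making its action on $Q$ smooth. The generators $y \mapsto s_ys_q^{-1}$ depend smoothly on $y \in Q$, and the standard argument identifies the subgroup they generate as a finite-dimensional Lie subgroup of $\mathrm{Diff}(Q)$; connectedness follows since the generators form a connected family through the identity. Proposition \ref{prop_conn_quandle} then produces the quandle isomorphism with $(G/G_q, \rhd_{\mathrm{conj}_G(s_q)})$.

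For (3), take $\fg := \mathrm{Lie}(G)$ with $G = \mathrm{Tr}(Q)$ and $\Phi := \mathrm{conj}_G(s_q)$; the automorphism $d\Phi$ of $\fg$ preserves $\fh := \mathrm{Lie}(G_q)$. Since $1$ is not an eigenvalue of $d\Phi$ on $\fg/\fh \cong T_qQ$, the subspace $\fh$ admits a $d\Phi$-stable complement $\fm$, which we identify with $T_qQ$. Define
\[
X * Y := -[X, Y]_{\fm}, \qquad [X, Y, Z] := -[[X, Y]_{\fh}, Z], \qquad \sigma := d\Phi|_{\fm} = d_qs_q.
\]
The axioms (LY1)--(LY6) and (ISM0)--(ISM3) then follow from the Jacobi identity in $\fg$ together with the $\Phi$-equivariance of the decomposition $\fg = \fh \oplus \fm$, which is the heart of the Nomizu--Yamaguti computation.

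For (4) and (5), one integrates algebraic data to geometric. For (4), an isomorphism $(T_qQ, d_qs_q) \cong (T_{q'}Q', d_{q'}s_{q'})$ extends to an isomorphism of the enveloping Lie algebras intertwining the $d\Phi$'s, which lifts via Lie's theorems to a local group isomorphism compatible with conjugation by $s_q$ and $s_{q'}$, descending to a local quandle isomorphism. For (5), given $(T, \sigma)$, construct $\fg := \langle D_{x,y} \rangle \oplus T$ with bracket defined so that the reductive decomposition reproduces the Lie-Yamaguti operations, integrate to a simply-connected Lie group $G$ carrying an automorphism $\Phi$ inducing $\sigma$, take $H \subset G^\Phi$ with $\mathrm{Lie}(H) = \langle D_{x,y}\rangle$, and set $Q := G/H$ with $\rhd_\Phi$; simple-connectedness is arranged by the choice of $H$. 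The principal obstacle lies in (2): producing a Lie group structure on $\mathrm{Tr}(Q)$ from purely quandle-theoretic input is a chicken-and-egg step, since proving finite-dimensionality of the generated subgroup essentially requires the infinitesimal algebra of (3) in parallel, and this circularity is the technical core of \cite{Kowalski1980}'s treatment.
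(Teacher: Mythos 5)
First, a point of calibration: the paper does not actually prove Theorem \ref{thm_reg_s_mfd} --- every part carries a pinpoint citation to \cite{Kowalski1980}, and the only proof-like material in the text is Constructions \ref{const_ism_from_lrst} and \ref{const_lrst_from_ism}, which fix the conventions. Your sketch follows the same Nomizu--Yamaguti--Kowalski route as the cited source, and your part (1) is essentially the correct argument (invertibility of $id-d_qs_q$ makes $y\mapsto y\rhd q$ a local diffeomorphism, so $\Trans(Q)$-orbits are open). The genuine gap is in (2), and you name it yourself without filling it. The missing idea is the \emph{canonical connection}: a regular $s$-manifold admits a unique affine connection invariant under all the symmetries $s_x$ (its construction is exactly where the invertibility of $id-d_xs_x$ is used globally), every transvection is an affine transformation of this connection, and the group of affine transformations of a connection on a connected manifold is a finite-dimensional Lie group by the classical Kobayashi--Nomizu theorem. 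That is how Kowalski breaks the ``chicken-and-egg'' circularity you flag at the end: finite-dimensionality of $\Trans(Q)$ does not come from already knowing the infinitesimal algebra of (3), but from embedding $\Trans(Q)$ in the affine group of the connection. Without this (or an equivalent device) your step (2) is an assertion, and (3)--(5) rest on it.

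Two further concrete problems. In (3) your operations carry the wrong signs for this paper's axioms: Construction \ref{const_ism_from_lrst} sets $x*y=[x,y]_\fm$ and $[x,y,z]=[[x,y]_\fh,z]$, and with the axioms (LY1)--(LY6) as stated here the verification of (LY3) is precisely the Jacobi identity, via
\[
[[x,y]_\fh,z]+[[x,y]_\fm,z]_\fm+\mathrm{cyc}=[[x,y],z]_\fm+\mathrm{cyc}=0 .
\]
Negating \emph{both} operations, as you do, destroys this cancellation: the double binary product $(x*y)*z$ is insensitive to the sign flip while the ternary term changes sign, so your $(-*,-[\ ])$ fails (LY3). (Negating only $*$ would be harmless; negating $[\ ]$ alone or together with $*$ is not.) Finally, your (5) addresses existence only: the ``unique up to a unique isomorphism'' clause --- which the paper relies on later --- requires both globalizing the local isomorphism of (4) over a simply-connected base and the rigidity statement that a homomorphism of connected regular $s$-manifolds is determined by its value and differential at one point; the latter is nontrivial and is exactly Theorem \ref{thm_hom}(1), proved separately in the paper. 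You should also say why the subgroup $H\subseteq G^\Phi$ in your construction for (5) is closed (it is the identity component of the closed group $G^\Phi$, since it has the same Lie algebra).
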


\begin{remark}
(1)
For the uniqueness of the isomorphism in (5), 
see also \cite[Theorem III.22, Proposition III.23, Appendix B.7]{Kowalski1980}. 

(2)
Exactly analogous statements hold in the complex analytic case. 

In the algebraic case (over an algebraically closed field in characteristic $0$), 
the statements (1)--(3) hold. 
\end{remark}

In the next subsection, we will see that 
the assignment in (3) can be made into a functor: 
To a homomorphism (not only to an isomorphism as in (4)) of regular $s$-manifolds, 
we can associate a homomorphism of infinitesimal $s$-manifolds. 

For that, we will need some construction from the proof of (3)--(5) of this theorem. 
Let us start with some more definitions. 
\begin{definition}
(1) (\cite{Fedenko1977})
A \emph{$\Phi$-space} is a triplet $(G, H, \Phi)$ 
satisfying the following conditions: 
\begin{itemize}
\item
$G$ is a connected Lie group. 
\item
$\Phi$ is an endomorphism of $G$. 
\item
$H$ is a subgroup of $G^\Phi$ containing the connected component 
$G^\Phi_0$ of $G^\Phi$ at the identity. 
\end{itemize}

A $\Phi$-space $(G, H, \Phi)$ is called a \emph{regular $\Phi$-space} 
if $\Phi$ is an automorphism 
and $\mathrm{Lie}(H)$ (which is equal to $\mathrm{Lie}(G)^{d_e\Phi}$) 
is equal to the \emph{generalized} eigenspace of 
$d_e\Phi\in\mathrm{End}(\mathrm{Lie}(G))$ for $1$. 

If $\Phi$ is an automorphism, 
the quotient space $G/H$ with the operation $\rhd_\Phi$ 
defined in Proposition \ref{prop_conn_quandle} (1) 
is called a \emph{homogeneous $\Phi$-space}. 
If furthermore $(G, H, \Phi)$ is a regular $\Phi$-space, 
then $(G/H, \rhd_\Phi)$ is called 
a \emph{homogeneous regular $\Phi$-space}. 

(1)'
A \emph{weak $\Phi$-space} is a triplet $(G, H, \Phi)$ 
satisfying the following conditions: 
\begin{itemize}
\item
$G$ is a connected Lie group. 
\item
$\Phi$ is an endomorphism of $G$. 
\item
$H$ is a closed subgroup contained in $G^\Phi$. 
\end{itemize}
A \emph{transitive weak $\Phi$-space} 
is a weak $\Phi$-space where $\Phi$ is an automorphism and 
$(G/H, \rhd_\Phi)$ is a transitive quandle. 

If $\Phi$ is an automorphism, 
the $(G/H, \rhd_\Phi)$ 
is called a \emph{homogeneous weak $\Phi$-space}. 
It is called a \emph{homogeneous transitive weak $\Phi$-space}
if it is transitive as a quandle. 

(2)
(\cite{Fedenko1977})
A \emph{homomorphism} of (weak) $\Phi$-spaces 
$(G, H, \Phi)$ to $(G', H', \Phi')$ is a homomorphism of Lie groups 
$G\to G'$ which maps $H$ into $H'$ and commutes with $\Phi$ and $\Phi'$. 
\end{definition}

We make similar definitions in the complex analytic and algebraic categories.

\begin{definition}
(1)
For a smooth quandle (resp. a complex analytic quandle or a quandle variety) $Q$, 
a \emph{(weak) $\Phi$-space for $Q$} is 
a pair of a (weak) $\Phi$-space $(G, H, \Phi)$ with $\Phi$ an automorphism and 
an isomorphism $(G/H, \rhd_\Phi) \to (Q, \rhd)$. 

(2)
Let $(G, H, \Phi)$ and $(G', H', \Phi')$ 
be (weak) $\Phi$-spaces 
for $Q$ and $Q'$ 
given by isomorphisms $\alpha: G/H\to Q$ and $\alpha': G'/H'\to Q'$. 
For a quandle homomorphism $F: Q\to Q'$, 
a \emph{homomorphism $\tilde{F}: (G, H, \Phi)\to (G', H', \Phi')$ 
of (weak) $\Phi$-spaces over $F$} 
is a homomorphism of weak $\Phi$-spaces, given by $\tilde{F}: G\to G'$, 
which is compatible with $F$: Namely, 
if $\pi: G\to G/H$ and $\pi':  G'\to G'/H'$ 
are the natural projections, then 
$F\circ \alpha\circ\pi = \alpha'\circ\pi'\circ \tilde{F}$. 
\end{definition}

\begin{remark} \label{rem_phi_sp}
(1)
Note that, if $Q$ is a regular $s$-manifold 
and $(G, H, \Phi)$ is a weak $\Phi$-space for $Q$, 
then $1$ is not an eigenvalue of the endomorphism induced by 
$d_e\Phi$ on $\mathrm{Lie}(G)/\mathrm{Lie}(H)$, 
and thus $(G, H, \Phi)$ is in fact a regular $\Phi$-space. 

(2)
From the previous theorems, 
a connected transitive smooth quandle $Q$ 
(or complex analytic quandle or quandle variety in characteristic $0$) 
is a homogeneous weak $\Phi$-space. 

More precisely, if we take a point $q\in Q$ and let $G=\mathrm{Tr}(Q)$, 
then $(G, G_q, \mathrm{conj}_G(s_q))$ 
together with the map $G/G_q\to Q; gG_q\mapsto g\cdot q$ 
is a transitive weak $\Phi$-space for $Q$. 
If $Q$ is a connected regular $s$-manifold, 
then $(G, G_q, \mathrm{conj}_G(s_q))$ is a regular $\Phi$-space for $Q$. 
\end{remark}

\begin{definition}
(\cite{Fedenko1977}, \cite[Definition III.29]{Kowalski1980})
(1) 
A \emph{local regular $s$-triplet} is a triplet $(\fg, \fh, \varphi)$ 
satisfying the following conditions: 
\begin{itemize}
\item
$\fg$ is a Lie algebra. 
\item
$\varphi$ is an automorphism of $\fg$. 
\item
$\fh=\fg^\varphi$, 
and it is also equal to the generalized eigenspace of $\varphi$ for $1$. 
\end{itemize}
Note that we actually do not have to specify $\fh$. 

(2)
A \emph{homomorphism $(\fg, \fh, \varphi)\to (\fg', \fh', \varphi')$ of local regular $s$-triplets} 
is a homomorphism of Lie algebras 
$\fg\to \fg'$ which commutes with $\varphi$ and $\varphi'$ 
(and hence maps $\fh$ into $\fh'$). 
\end{definition}

We can associate an infinitesimal $s$-manifold 
to a local regular $s$-triplet, and vice versa, as follows. 
Restricting to ``prime'' local regular $s$-triplets, 
we have a one-to-one correspondence(\cite[Theorem III.30]{Kowalski1980}). 

\begin{construction}\label{const_ism_from_lrst}
Let $(\fg, \fh, \varphi)$ be a local regular $s$-triplet. 
There is a canonical decomposition $\fg=\fm\oplus\fh$ as a vector space, 
where $\fm$ is the sum of 
the generalized eigenspaces of $\varphi$ for eigenvalues different from $1$. 
We see that $[\fh, \fm]\subseteq \fm$ holds: 
In other words, $(\fg, \fm, \fh)$ is a reductive triple. 
For an element $X\in \fg$, 
let $X_\fm$ and $X_\fh$ denote 
its $\fm$-component and $\fh$-component, 
respectively. 
Then we have $\varphi(X_\fm)=\varphi(X)_\fm$ and $\varphi(X_\fh)=\varphi(X)_\fh=X_\fh$. 

We define operations on $\fm$, 
and hence on $\fg/\fh$ via the natural identification $\fm\cong \fg/\fh$, 
as follows: For $x, y, z\in\fm$, 
\begin{eqnarray*}
\sigma(x) & := & \varphi(x), \\
x*y & := & [x, y]_\fm, \\
 {[}x, y, z{]}  & := & [[x, y]_\fh, z]. 
\end{eqnarray*}
(Note that $\varphi(\fm)\subseteq\fm$ and 
$[\fh, \fm]\subseteq \fm$ hold.)
Then $\fm$ with the operations $*$ and $[\ ]$ is a Lie-Yamaguti algebra. 
It is straightforward to see that $\sigma$ is an automorphism on $(\fm, *, [\ ])$ 
and that $\sigma([x, y, z])=[x, y, \sigma(z)]$ holds. 
The eigenvalues of $\varphi$ on $\fm$ are all different from $1$, 
and it follows that $id_\fm-\sigma$ is invertible. 
Thus we have a structure of an infinitesimal $s$-manifold on $\fm$, or $\fg/\fh$. 
\end{construction}

\begin{definition}
(1)
For an infinitesimal $s$-manifold $(T, \sigma)$, 
a pair of a local regular $s$-triplet $(\fg, \fh, \varphi)$ and 
an isomorphism $\fg/\fh\to T$ 
is called a \emph{local regular $s$-triplet for $T$}. 

(2)
Given local regular $s$-triplets
$(\fg, \fh, \varphi)$ and $(\fg', \fh', \varphi')$  for $T$ and $T'$, respectively, 
and a homomorphism $f: T\to T'$ 
of infinitesimal $s$-manifolds, 
a \emph{homomorphism of local regular $s$-triplets 
$(\fg, \fh, \varphi)\to (\fg', \fh', \varphi')$ over $f$} 
is a  homomorphism of local regular $s$-triplets 
which induces $f$. 
\end{definition}

\begin{construction}\label{const_lrst_from_ism}
For a Lie-Yamaguti algebra $T$, 
a reductive triple inducing $T$ 
can be constructed as follows (\cite[pp. 61--62]{Nomizu1954}, \cite[Prop. 2.1]{Yamaguti1958}). 
We write $D_{x, y}\in\mathrm{End}(T)$ for the map 
$z\mapsto [x, y, z]$. 
\begin{itemize}
\item
$\mathfrak{h}(T)$ is the linear subspace of $\mathrm{End}(T)$ 
generated by $\{D_{x, y} \mid x, y\in T\}$, 
which is in fact a Lie subalgebra of $\mathrm{End}(T)$. 
\item
$\mathfrak{g}(T):=T\oplus \mathfrak{h}(T)$. 
\item
$[(x, u), (y, v)] := (x*y+u(y)-v(x), D_{x, y} +[u, v])$. 
\end{itemize}
It is straightforard to check that this defines a Lie algebra 
and that $(\fg(T), T, \fh(T))$ is a reductive triple, 
using the axioms of Lie-Yamaguti algebras. 
(A sign in the definition of $[\ ]$ is different from the one in \cite{Yamaguti1958} 
since we are following the conventions in \cite{Yamaguti1969}.)

An automorphism $\sigma$ of $T$ extends to an automorphism $\varphi_\sigma$ of $\fg(T)$ 
by $\varphi_\sigma(x, f)=(\sigma(x), \sigma\circ f\circ \sigma^{-1})$. 
If $(T, \sigma)$ is an infinitesimal $s$-manifold, 
then $\sigma([x, y, z])=[x, y, \sigma(z)]$ holds for any $x, y, z\in T$. 
This is equivalent to $\sigma\circ D_{x, y}\circ\sigma^{-1}=D_{x, y}$, 
hence $\varphi_\sigma|_{\fh(T)}=id_{\fh(T)}$ holds. 
Since $\sigma$ does not have $1$ as an eigenvalue, 
we see that $(\fg(T), \fh(T), \varphi_\sigma)$ is a local regular $s$-triplet. 
(This is a ``prime'' local regular $s$-triplet in the sense that 
$\fh(T)$ does not contain a proper ideal of $\fg(T)$ 
and $[T, T]\cap \fh(T)=\fh(T)$: 
See \cite[Theorem III.30]{Kowalski1980}.) 

It is easy to see that $(\fg(T), \fh(T), \varphi_\sigma)$ is a local regular $s$-triplet for $T$. 
\end{construction}

Now let $Q$ be a connected regular $s$-manifold. 
We take a regular $\Phi$-space $(G, H, \Phi)$ for $Q$, 
and let $q\in Q$ be the point corresponding to the residue class of $e\in G$. 
Then $(\mathrm{Lie}(G), \mathrm{Lie}(H), d_e\Phi)$ 
is a local regular $s$-triplet, 
and therefore we have a structure of 
an infinitesimal $s$-manifold 
on $\mathrm{Lie}(G)/\mathrm{Lie}(H)\cong T_qQ$. 
This is independent of the choice of a $\Phi$-space. 
(In fact, there is also a differential-geometric description 
of the infinitesimal algebra 
using a canonical affine connection on $Q$. 
See \cite[Chapter III]{Kowalski1980}.)

Conversely, if $(T, \sigma)$ is an infinitesimal $s$-manifold, 
let $(\fg(T), \fh(T), \varphi_\sigma)$ be the associated local regular $s$-triplet, 
$G$ the connected and simply-connected Lie group 
corresponding to $\fg(T)$ 
and $H$ the connected Lie subgroup of $G$ 
corresponding to $\fh(T)$. 
The automorphism $\varphi_\sigma$ of $\fg(T)$ 
defines an automorphism $\Phi$ of $G$, 
and since $H$ is a subgroup of $G^\Phi$ with the same Lie algebra, 
we see that $H$ is a closed subgroup. 
Thus $(G, H, \Phi)$ is a regular $\Phi$-space, 
and $(G/H, \rhd_\Phi)$ is a connected and simply-connected regular $s$-manifold 
whose infinitesimal algebra is isomorphic to $(T, \sigma)$.

\subsection{Homomorphisms of transitive quandle spaces}

Although some part of the following theorem is a combination of 
results in \cite{Fedenko1977}, 
we will give a detailed proof 
since the monograph \cite{Fedenko1977} is of limited availability.

\begin{theorem}[See \cite{Fedenko1977}]\label{thm_hom}
Let $Q$ and $Q'$ be connected regular $s$-manifolds 
and take $q\in Q$, $q'\in Q'$. 

(1)
If $F: Q\to Q'$ is a homomorphism with $F(q)=q'$, 
then $d_qF: T_qQ\to T_{q'}Q'$ is a homomorphism of 
infinitesimal $s$-manifolds. 

If $F_1, F_2: Q\to Q'$ are two homomorphisms with $F_1(q)=F_2(q)=q'$ and $d_qF_1=d_qF_2$, 
then $F_1=F_2$ holds. 

(2)
In the smooth or complex analytic case, 
if $f: T_qQ\to T_{q'}Q'$ is a homomorphism of 
infinitesimal $s$-manifolds, 
then there is a local homomorphism $F: (Q, q)\to (Q', q')$ 
such that $d_qF=f$. 

If $Q$ is simply-connected, one can take a global homomorphism. 

\end{theorem}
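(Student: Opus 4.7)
The overall plan is to transfer the problem through the chain of correspondences relating regular $s$-manifolds to their infinitesimal data by way of $\Phi$-spaces and local regular $s$-triplets. The uniqueness assertion in Part (1) I would dispatch directly by a rigidity argument: letting $S = \{x \in Q : F_1(x) = F_2(x) \text{ and } d_xF_1 = d_xF_2\}$, which contains $q$ and is closed, the identity $F_i \circ s_x = s_{F_i(x)} \circ F_i$ combined with the chain rule shows $S$ is stable under each $s_x$ for $x \in S$, and the invertibility of $\mathrm{id} - d_xs_x$ (via the implicit function theorem) shows that $S$ is also open; so $S = Q$ by connectedness and the $\Trans(Q)$-transitivity of Theorem \ref{thm_reg_s_mfd}(1).

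For the substantive part of (1), I would lift $F$ to a Lie group homomorphism $\tilde F: G \to G'$ of transvection groups $G = \Trans(Q)$, $G' = \Trans(Q')$, compatible with $F$ and with the conjugation automorphisms $\Phi = \mathrm{conj}_G(s_q)$ and $\Phi' = \mathrm{conj}_{G'}(s_{q'})$. The differential $d_e\tilde F: \fg \to \fg'$ would then be a homomorphism of local regular $s$-triplets, and Construction \ref{const_ism_from_lrst} would identify the induced map on $\fm \cong T_qQ$ with $d_qF$, exhibiting the latter as a homomorphism of infinitesimal $s$-manifolds. The candidate for $\tilde F$ on the generating set $\{s_xs_q^{-1} : x \in Q\}$ of $G$ is forced: $\tilde F(s_xs_q^{-1}) = s_{F(x)}s_{q'}^{-1}$. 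The main technical task is well-definedness, i.e.\ showing every relation $\prod_i(s_{x_i}s_q^{-1})^{\epsilon_i} = e$ holding in $G$ is preserved under the indexwise application of $F$. I would settle this by reducing to an infinitesimal identity at $q$, using that elements of $\Trans(Q)$ are determined by their jet at $q$ under the regularity condition, after which the identity follows by iterated differentiation of $F \circ s_x = s_{F(x)} \circ F$.

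For Part (2), I would construct $F$ as a graph inside $Q \times Q'$. The product is a regular $s$-manifold whose infinitesimal algebra at $(q, q')$ is $T_qQ \oplus T_{q'}Q'$, and the graph $\Gamma_f = \{(v, f(v)) : v \in T_qQ\}$ is a sub-infinitesimal-$s$-manifold, since $f$ is a Lie-Yamaguti homomorphism commuting with the $\sigma$'s. I would then produce a connected sub-regular-$s$-manifold $N \subset Q \times Q'$ through $(q, q')$ with $T_{(q, q')}N = \Gamma_f$, by integrating the corresponding Lie subalgebra of the transvection Lie algebra of $Q \times Q'$ to a Lie subgroup and taking its orbit through $(q, q')$. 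The first projection $N \to Q$ has an isomorphism as differential at $(q, q')$, hence everywhere by homogeneity, so it is a local diffeomorphism; thus $N$ is locally the graph of a quandle homomorphism $F$ with $d_qF = f$. If $Q$ is simply-connected, a completeness argument (the regularity condition provides enough rigidity to lift paths) upgrades the local diffeomorphism to a covering map, which must be a diffeomorphism, yielding a global $F$.

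The principal obstacle in both parts is an integrability/compatibility question: in Part (1), that relations in $\Trans(Q)$ among the $s_xs_q^{-1}$ are preserved when $x$ is replaced by $F(x)$; in Part (2), that the integral subgroup corresponding to $\Gamma_f$ actually cuts out a sub-manifold of $Q \times Q'$ rather than only an immersion with non-trivial monodromy. Both rely on the regularity condition $\mathrm{id} - d_xs_x$ invertible, which provides the local rigidity needed to recover transvection-group elements, or integral sub-regular-$s$-manifolds, from their jet data at a single point.
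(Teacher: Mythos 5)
The substantive half of Part (1) rests on a step that fails. You propose to define $\tilde F:\mathrm{Tr}(Q)\to\mathrm{Tr}(Q')$ on generators by $s_xs_q^{-1}\mapsto s_{F(x)}s_{q'}^{-1}$ and to handle well-definedness by checking that relations among the $s_{x_i}s_q^{-1}$ are preserved. No such homomorphism exists in general: the paper records an explicit counterexample in the Remark following Proposition \ref{prop_group_hom_extended}. Take $Q$ the conjugacy class of $X_0=\mathrm{diag}(i,-i)$ in $\SL(2,\bC)$, $Q'=Q\times\bC^2$ with $(X,a)\rhd(Y,b)=(XYX^{-1},a+X(b-a))$, and $F$ the zero-section embedding. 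Then for $X_1=-X_0$ the word $s_{X_1}s_{X_0}^{-1}$ is the identity in $\mathrm{Tr}(Q)=\PSL(2,\bC)$, while $s_{(X_1,0)}s_{(X_0,0)}^{-1}(Y,b)=(Y,-b)$ is not the identity in $\mathrm{Tr}(Q')$. Your proposed verification --- reducing to jet data at $q$ via iterated differentiation of $F\circ s_x=s_{F(x)}\circ F$ --- cannot close this gap, because that intertwining relation only constrains the image word on the subset $F(Q)\subseteq Q'$; in the example the image word fixes $F(Q)$ pointwise yet acts nontrivially in the directions normal to $F(Q)$, and no identity derived at points of $F(Q)$ detects this. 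The paper circumvents exactly this obstruction by abandoning $\mathrm{Tr}(Q)$ in favor of the extended group $\ext{G}=\{(g,g')\in G\times G'\mid F\circ g=g'\circ F\}$ of Proposition \ref{prop_group_hom_extended}: its identity component still surjects onto $\mathrm{Tr}(Q)$ and presents $Q$ as a regular $\Phi$-space, and it carries a tautological homomorphism to $G'$, so there is no well-definedness to check. Your argument for (1) needs this (or an equivalent) replacement.

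The remaining parts are closer to the paper but still incomplete. For the uniqueness claim in (1), your open--closed scheme matches the paper's, but the assertion that openness of $S$ follows ``via the implicit function theorem'' from invertibility of $\mathrm{id}-d_xs_x$ is not substantiated; the paper proves openness by writing nearby points as $\exp(v)q$ with $v\in\fm$ and showing $F(\exp(v)q)=\exp(d_qF(v))q'$, which again uses the extended group. For Part (2), your graph $\Gamma_f\subseteq T_qQ\oplus T_{q'}Q'$ is in substance the Lie-algebra construction of Proposition \ref{prop_alg_hom_extended}, and the local statement is workable; but the upgrade from the immersed orbit $N$ to a global homomorphism over a simply-connected $Q$ is asserted rather than proved. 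The paper avoids the covering-space issue by integrating the extended Lie algebra to a simply-connected group and invoking the uniqueness statement of Theorem \ref{thm_reg_s_mfd}(5) to identify the resulting homogeneous space with $Q$ outright.
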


Combined with Theorem \ref{thm_reg_s_mfd}, 
this implies the following. 
\begin{corollary}\label{cor_functor}
In the smooth or complex analytic setting, 
let $\mathbf{Crsm}_*$ be the category of pointed connected regular $s$-manifolds, 
$\mathbf{SCrsm}_*$ the category of pointed simply-connected and connected regular $s$-manifolds, 
and $\mathbf{Ism}$ the category of infinitesimal $s$-manifolds. 

The assignment $(Q, q)\mapsto T_qQ$ gives an essentially surjective faithful functor 
$\mathcal{T}: \mathbf{Crsm}_*\to \mathbf{Ism}$,  
and its restriction $\mathbf{SCrsm}_*\to \mathbf{Ism}$ is an equivalence. 
\end{corollary}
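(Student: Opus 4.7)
My plan is to assemble the corollary from Theorems \ref{thm_reg_s_mfd} and \ref{thm_hom} piece by piece. The four properties to check are functoriality, faithfulness, essential surjectivity, and, for the restriction to $\mathbf{SCrsm}_*$, fullness.

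First I would verify that $\mathcal{T}$ is genuinely a functor. On objects, $T_qQ$ carries the structure of an infinitesimal $s$-manifold by Theorem \ref{thm_reg_s_mfd}(3). On morphisms, given a pointed homomorphism $F: (Q, q) \to (Q', q')$, Theorem \ref{thm_hom}(1) says that $d_qF$ is a homomorphism of infinitesimal $s$-manifolds. Compatibility with identities and composition then follows from $d_q(\mathrm{id}_Q) = \mathrm{id}_{T_qQ}$ and the chain rule $d_q(F' \circ F) = d_{q'}F' \circ d_qF$, both of which are standard.

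Faithfulness is immediate from the uniqueness clause in Theorem \ref{thm_hom}(1): two pointed homomorphisms $F_1, F_2: (Q, q) \to (Q', q')$ with $d_qF_1 = d_qF_2$ must coincide, so $\mathcal{T}$ is injective on $\mathrm{Hom}$-sets. Essential surjectivity comes from Theorem \ref{thm_reg_s_mfd}(5): every infinitesimal $s$-manifold $(T, \sigma)$ is realized as $T_qQ$ for some connected, simply-connected $Q$ with a distinguished point $q$. In particular every object of $\mathbf{Ism}$ is hit, and in fact the witness already lies in the smaller category $\mathbf{SCrsm}_*$, so essential surjectivity holds for both $\mathcal{T}$ and its restriction simultaneously.

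The remaining ingredient is fullness on $\mathbf{SCrsm}_*$. This is precisely the global lifting statement in Theorem \ref{thm_hom}(2): when $Q$ is simply-connected, any homomorphism $f: T_qQ \to T_{q'}Q'$ of infinitesimal $s$-manifolds lifts to a pointed homomorphism $F: (Q, q) \to (Q', q')$ with $d_qF = f$. Combining fullness with the previously established faithfulness and essential surjectivity yields the equivalence $\mathbf{SCrsm}_* \simeq \mathbf{Ism}$. The real work has all been absorbed into Theorem \ref{thm_hom}, so the only ``hard part'' here is not in the corollary itself but in the lifting and uniqueness arguments it invokes; the assembly is then a matter of clean bookkeeping.
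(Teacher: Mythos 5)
Your proposal is correct and follows exactly the route the paper intends: the paper states the corollary as an immediate consequence of Theorems \ref{thm_reg_s_mfd} and \ref{thm_hom}, with functoriality and faithfulness coming from Theorem \ref{thm_hom}(1), essential surjectivity from Theorem \ref{thm_reg_s_mfd}(5), and fullness on $\mathbf{SCrsm}_*$ from Theorem \ref{thm_hom}(2). Your assembly simply makes explicit the bookkeeping the paper leaves to the reader.
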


\begin{remark}
In the algebraic case, we can say that the assignment gives a faithful functor. 
\end{remark}

For the proof, 
we first study 
homomorphisms of transitive quandle spaces 
in a little more generality than is needed for the proof 
of the theorem.

We begin with the case of a \emph{surjective} quandle homomorphism, 
although this is not logically required for the proof of the theorem. 
In this case, we have natural homomorphisms between 
the groups of inner automorphisms and transvections, respectively. 

\begin{proposition}
Let $F: Q\to Q'$ be a surjective homomorphism of quandles. 

(1)
There are naturally induced surjective homomorphisms 
$\mathrm{Inn}(F): \mathrm{Inn}(Q)\to \mathrm{Inn}(Q')$ and 
$\mathrm{Tr}(F): \mathrm{Tr}(Q)\to \mathrm{Tr}(Q')$, 
characterized by $\mathrm{Inn}(F)(s_q)=s_{F(q)}$ 
and $\mathrm{Tr}(F)(s_q^{-1}s_r)=s_{F(q)}^{-1}s_{F(r)}$ 
(and, more generally,  
$\mathrm{Tr}(F)(s_{q_1}^{e_1}\cdots s_{q_k}^{e_k})=
s_{F(q_1)}^{e_1}\cdots s_{F(q_k)}^{e_k}$ for $\sum e_i=0$). 

These homomorphisms are compatible with 
the actions of $\mathrm{Inn}(Q)$, $\mathrm{Inn}(Q')$, 
$\mathrm{Tr}(Q)$ and $\mathrm{Tr}(Q')$ on $Q$ and $Q'$. 

(2)
If furthermore $Q$ and $Q'$ are connected and transitive smooth quandles  
(resp. complex analytic quandles or quandle varieties) 
and $F$ is a $C^\infty$-map (resp. holomorphic or regular map), 
then $\mathrm{Inn}(F)$ and $\mathrm{Tr}(F)$ are 
$C^\infty$-maps (resp. holomorphic or regular maps). 

Consequently, if we choose $q\in Q$, 
let $G=\mathrm{Tr}(Q)$, $G'=\mathrm{Tr}(Q')$, 
$\Phi= \mathrm{conj}_G(s_q)$
and $\Phi'= \mathrm{conj}_{G'}(s_{F(q)})$
and think of 
$(G, G_q, \Phi)$ and $(G', G'_{F(q)}, \Phi')$ 
as transitive weak $\Phi$-spaces for $Q$ and $Q'$, 
then $\mathrm{Tr}(F)$ is a homomorphism 
of weak $\Phi$-spaces over $F$. 
\end{proposition}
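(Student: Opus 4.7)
For part (1), the central step is the well-definedness of $\Inn(F)\colon \Inn(Q)\to \Inn(Q')$ by $s_q\mapsto s_{F(q)}$. The quandle-homomorphism identity $F\circ s_q=s_{F(q)}\circ F$ gives, for any word $w=s_{q_1}^{e_1}\cdots s_{q_k}^{e_k}$, the intertwining relation $F\circ w=w'\circ F$, where $w'=s_{F(q_1)}^{e_1}\cdots s_{F(q_k)}^{e_k}$. If $w=\mathrm{id}_Q$, then $w'$ fixes every point of $F(Q)=Q'$ by surjectivity of $F$, forcing $w'=\mathrm{id}_{Q'}$. Hence the assignment on generators descends to a well-defined group homomorphism. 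Surjectivity is immediate from the surjectivity of $F$, and the characterization of $\Trans(F)$ comes from restriction to the subgroup cut out by $\sum e_i=0$, a condition manifestly preserved by the formula. The compatibility with actions $\Inn(F)(\phi)\circ F=F\circ \phi$ is precisely the intertwining relation just established.

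For part (2), I need smoothness (resp.\ holomorphicity, regularity) of $\Inn(F)$ and $\Trans(F)$ as maps of Lie (or algebraic) groups. By the earlier theorem on $\Inn(Q)$, the group structures are built so that the actions on $Q$ and $Q'$ are of the appropriate class, and in particular so that $q\mapsto s_q$ is of that class. A local model for $\Trans(Q)$ near the identity is given by finite products $s_{q_1}s_{q_0}^{-1}\cdots s_{q_k}s_{q_0}^{-1}$ with the $q_i$ varying smoothly in $Q$; under $\Trans(F)$ such a product is sent to $s_{F(q_1)}s_{F(q_0)}^{-1}\cdots s_{F(q_k)}s_{F(q_0)}^{-1}$, which is smooth in the $q_i$ since $F$ and $r\mapsto s_r$ are. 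This gives smoothness near the identity, and group translation propagates it to all of $\Trans(Q)$; one then extends to $\Inn(Q)$ by choosing coset representatives of $\Inn(Q)/\Trans(Q)$ of the form $s_{q_0}^n$. Alternatively, in the smooth and complex analytic cases one may invoke the standard fact that a continuous homomorphism between Lie groups is automatically of the given class. This step is the main obstacle: the algebraic part of the argument is essentially formal, but the analytic/algebraic-geometric smoothness requires us to use the specific construction underlying the cited theorem.

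The final assertion about weak $\Phi$-spaces is formal once (1) and (2) are in hand. Compatibility with $\Phi=\mathrm{conj}_G(s_q)$ and $\Phi'=\mathrm{conj}_{G'}(s_{F(q)})$ follows directly from $\Trans(F)(s_q)=s_{F(q)}$: for any $g\in G$,
\[
\Trans(F)(\Phi(g))=\Trans(F)(s_qgs_q^{-1})=s_{F(q)}\Trans(F)(g)s_{F(q)}^{-1}=\Phi'(\Trans(F)(g)).
\]
The inclusion $\Trans(F)(G_q)\subseteq G'_{F(q)}$ is the action-compatibility at $q$: if $gq=q$, then $\Trans(F)(g)(F(q))=F(gq)=F(q)$. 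Commutativity with the projections $G\to G/G_q\to Q$ and $G'\to G'/G'_{F(q)}\to Q'$ is again action-compatibility applied at $e\in G$, whose image in $G/G_q$ corresponds to $q\in Q$ under $\alpha$. Thus $\Trans(F)$ is a homomorphism of weak $\Phi$-spaces over $F$ as required.
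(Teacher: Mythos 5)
Your part (1) is correct and is in fact more self-contained than the paper's, which simply cites Eisermann (Proposition 2.26 of that reference) for the well-definedness of $\mathrm{Inn}(F)$; your direct argument --- a word $w$ acting as the identity on $Q$ intertwines with a word $w'$ fixing all of $F(Q)=Q'$ by surjectivity --- is precisely the content of that citation. The final assertion about weak $\Phi$-spaces also matches the paper, which likewise dispatches it as a formal consequence of the compatibility of the actions.

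Part (2) is where you genuinely diverge, and where the one real gap sits. The paper's route is: equivariance together with Sard's theorem makes $F$ a submersion, so one can choose a local section $\sigma:U'\to Q$ of $F$ and write the induced action of $\mathrm{Inn}(Q)$ on $Q'$ locally as the composite $(g,u')\mapsto F(g(\sigma(u')))$, which is manifestly of the required class; smoothness of $\mathrm{Inn}(F)$ then follows from the way the group structure on $\mathrm{Inn}(Q')$ is characterized by its action. You instead work with the word maps $Q^k\to\mathrm{Tr}(Q)$ and the regularity of $r\mapsto s_r$. This can be made to work, but the step you yourself flag as ``the main obstacle'' is a genuine gap as written: to deduce smoothness of $\mathrm{Tr}(F)$ from smoothness of its precomposition with the word maps, you need those word maps to admit smooth local sections (equivalently, to be submersive somewhere onto a neighbourhood in $\mathrm{Tr}(Q)$), which is a nontrivial structural input about how the Lie group structure on $\mathrm{Tr}(Q)$ is constructed, not something you can simply assert as a ``local model.'' Moreover, your proposed fallback --- that a continuous homomorphism between Lie groups is automatically of the given class --- fails in the complex analytic category (complex conjugation on $(\mathbb{C},+)$ is a continuous, even real-analytic, homomorphism of complex Lie groups that is not holomorphic) and has no counterpart in the algebraic category, so it cannot replace the argument in two of the three settings; even in the smooth case you would still owe a proof of continuity. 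The paper's local-section-of-$F$ device avoids all of this and covers the three settings uniformly (with an \'etale neighbourhood in the algebraic case).
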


\begin{proof}
(1)
The fact that $F$ induces 
such a homomorphism $\mathrm{Inn}(Q)\to \mathrm{Inn}(Q')$ 
is proven in \cite[Proposition 2.26]{Eisermann14}. 
This homomorphism maps $\mathrm{Tr}(Q)$ into $\mathrm{Tr}(Q')$. 
The surjectivity is obvious. 

The statement on the compatibility of group actions follows from 
$F(s_q(r))=s_{F(q)}(F(r))$, 
which is nothing but the condition that $F$ is a quandle homomorphism. 

(2)
We prove the first assertion for inner automorphism groups. 
The case of the groups of transvections is similar. 

From Sard's theorem, the transitivity of the action of $\mathrm{Inn}(Q)$ on $Q$ 
and the compatibility of the actions, 
we see that $F$ is submersive. 
For any $q'\in Q'$, 
let us take a neighborhood $U'$ of $q'$ 
and a local section $\sigma: U'\to Q$ of $F$. 
(In the algebraic case, take an \'etale neighborhood.) 
Then the action of $\mathrm{Inn}(Q)$ on $Q'$ is locally 
given by the composite map  
\[
\mathrm{Inn}(Q)\times U'\cong 
\mathrm{Inn}(Q)\times \sigma(U')\hookrightarrow \mathrm{Inn}(Q)\times Q\to Q\to Q', 
\]
hence is of class $C^\infty$ (resp. holomorphic or regular). 
This implies that $\mathrm{Inn}(F)$ is smooth, holomorphic or regular, respectively. 
(In the algebraic case, 
it is also possible to use Zariski's Main Theorem.) 

The assertion of the second paragraph is a consequence 
of the compatibility of the actions. 
\end{proof}

In the general case, 
we have a group homomorphism 
after extending the group acting on $Q$. 

\begin{proposition}\label{prop_group_hom_extended}
Let $Q$ and $Q'$ be quandles, 
$G=\mathrm{Inn}(Q)$ and $G'=\mathrm{Inn}(Q')$ 
(resp. $G=\mathrm{Tr}(Q)$ and $G'=\mathrm{Tr}(Q')$)
and let $F: Q\to Q'$ be a quandle homomorphism. 

(1)
Let $\ext{G}:=\{(g, g')\in G\times G' \mid F\circ g=g'\circ F\}\subseteq G\times G'$, 
and let $\pi_F: \ext{G}\to G$ and $\ext{F}: \ext{G}\to G'$ 
be the maps induced by the projection maps. 

Then $\ext{G}$ is a subgroup of $G\times G'$ 
which satisfies the following: 
\begin{itemize}
\item
$\pi_F: \ext{G}\to G$ is surjective. 
\item
$\ext{G}$ is invariant under $\mathrm{conj}_G(s_q)\times \mathrm{conj}_{G'}(s_{F(q)})$ 
for any $q\in Q$. 
\item
If we define 
an action of $\ext{G}$ on $Q$ via $\pi_F$ and 
an action of $\ext{G}$ on $Q'$ via $\ext{F}$, 
then $F$ is a $\ext{G}$-equivariant map. 
\item
For any $q\in Q$, 
the stablizer subgroup 
$\ext{G}_q$ (with respect to the above action) is 
elementwise fixed by $\mathrm{conj}_G(s_q)\times \mathrm{conj}_{G'}(s_{F(q)})$, 
and is mapped into $G'_{F(q)}$ 
by $\ext{F}: \ext{G}\to G'$. 
\end{itemize}

(2)
Assume furthermore that $Q$ and $Q'$ are transitive. 
Then, for any $q\in Q$, there is a natural identification 
of $(\ext{G}/\ext{G}_q, \rhd_{\ext{\Phi}})$ with $Q$, 
where $\ext{\Phi}$ is the restriction of 
$\mathrm{conj}_G(s_q)\times \mathrm{conj}_{G'}(s_{F(q)})$ to $\ext{G}$. 
The group homomorphism $\ext{F}$ commutes with $\ext{\Phi}$ and 
$\mathrm{conj}_{G'}(s_{F(q)})$, 
and it commutes with $F$. 

(3)
If $Q$ and $Q'$ are connected and transitive 
smooth quandles, complex analytic quandles or quandle varieties, 
then $\ext{G}$ is a closed subgroup of $G\times G'$. 

In the case $G=\mathrm{Tr}(Q)$ and $G'=\mathrm{Tr}(Q')$, 
let $\extc{G}$ be the connected component of $\ext{G}$ at the identity 
and $\extc{\Phi}$ the restriction of $\ext{\Phi}$ to $\extc{G}$. 
Then, we have a homomorphism 
$\extc{F}: (\extc{G}, \extc{G}_q, \extc{\Phi})\to (G', G'_{F(q)}, \mathrm{conj}_{G'}(s_{F(q)}))$ 
of weak $\Phi$-spaces over $F$. 
\end{proposition}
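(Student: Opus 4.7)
The plan is to treat parts (1), (2), (3) in order, with the substance concentrated in (1) and the remainder reducing to short verifications using the $\Phi$-space dictionary of Proposition \ref{prop_conn_quandle} together with Remark \ref{rem_phi_sp}.

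For (1), the subgroup property is a direct calculation: from $F\circ g_i=g_i'\circ F$ one gets $F\circ g_1g_2=g_1'g_2'\circ F$, and inversion is handled similarly. The crux is surjectivity of $\pi_F$. Given $g\in G$, I pick any expression $g=s_{q_1}^{e_1}\cdots s_{q_k}^{e_k}$ (with $\sum e_i=0$ in the transvection case) and set $g':=s_{F(q_1)}^{e_1}\cdots s_{F(q_k)}^{e_k}$; the relation $F\circ s_{q_i}=s_{F(q_i)}\circ F$ then gives $(g,g')\in\ext{G}$. Only existence is needed, so it does not matter that different expressions of $g$ may yield different $g'$'s. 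Invariance of $\ext{G}$ under $\mathrm{conj}_G(s_q)\times\mathrm{conj}_{G'}(s_{F(q)})$ is immediate from the same identity, and equivariance of $F$ is tautological from the definitions of the two actions. For the stabilizer statements: if $(g,g')\in\ext{G}_q$ then $g(q)=q$, hence $g'(F(q))=F(g(q))=F(q)$, giving $g'\in G'_{F(q)}$; the elementwise fixedness under conjugation is the classical observation (compare the proof of Proposition \ref{prop_conn_quandle}) that a stabilizer element $g\in G_q$ commutes with $s_q$, applied in each coordinate.

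For (2), transitivity of $G$ on $Q$ together with surjectivity of $\pi_F$ yield transitivity of the $\ext{G}$-action on $Q$, with stabilizer exactly $\ext{G}_q$; hence $(g,g')\ext{G}_q\mapsto g(q)$ is a bijection $\ext{G}/\ext{G}_q\to Q$. To see this is a quandle isomorphism with $\rhd_{\ext{\Phi}}$, I will compare the two expressions
\[
g_1(q)\rhd g_2(q) = s_{g_1(q)}g_2(q) = g_1 s_q g_1^{-1}g_2(q)
\]
and the image of $g_1\ext{\Phi}(g_1^{-1}g_2)\ext{G}_q$; the two agree because $s_q(q)=q$. The identity $\ext{F}\circ\ext{\Phi}=\mathrm{conj}_{G'}(s_{F(q)})\circ\ext{F}$ is immediate from the product definition of $\ext{\Phi}$, and the induced map on the quotient is $F$ by the description of the bijection.

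For (3), $\ext{G}$ is cut out by the condition that two morphisms $Q\to Q'$ coincide, which is a closed condition on $(g,g')\in G\times G'$ since $Q'$ is Hausdorff (respectively separated); hence $\ext{G}$ is a closed Lie (respectively algebraic) subgroup of $G\times G'$, and $\extc{G}$ is its identity component. Since $G=\mathrm{Tr}(Q)$ is connected, the restriction $\pi_F|_{\extc{G}}$ surjects onto the identity component of $G$, which is all of $G$; thus $\extc{G}$ acts transitively on $Q$ with stabilizer $\extc{G}_q$, and $(\extc{G},\extc{G}_q,\extc{\Phi})$ is a weak $\Phi$-space for $Q$. The map $\extc{F}$ is smooth (respectively holomorphic, regular) as the restriction of the second projection, commutes with $\extc{\Phi}$ and $\mathrm{conj}_{G'}(s_{F(q)})$ by (2), sends $\extc{G}_q$ into $G'_{F(q)}$ by (1), and induces $F$ on the quotient. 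The main obstacle I foresee is not a single deep point but careful bookkeeping: ensuring the various compatibilities thread through consistently, and in particular verifying that the passage $\ext{G}\rightsquigarrow\extc{G}$ preserves transitivity of the induced action on $Q$, which is precisely where the connectedness of $\mathrm{Tr}(Q)$ enters. In the algebraic case there is also some mild care needed to confirm that the closedness argument yields a genuine closed subgroup scheme, but in characteristic zero this is standard.
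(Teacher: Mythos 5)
Your proposal is correct and follows essentially the same route as the paper's proof: lifting the generators $s_x$ (resp.\ $s_xs_q^{-1}$) to get surjectivity of $\pi_F$, the identity $s_qgs_q^{-1}=g$ for $g\in G_q$ to handle the stabilizer, the diagonal/Hausdorff argument for closedness, and connectedness of $\mathrm{Tr}(Q)$ to keep surjectivity after passing to $\extc{G}$. The only cosmetic difference is in (2), where you verify the quandle isomorphism $\ext{G}/\ext{G}_q\cong Q$ by direct computation while the paper factors it through the bijection $\ext{G}/\ext{G}_q\to G/G_q$ and cites Proposition \ref{prop_conn_quandle}; the underlying calculation is the same.
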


\begin{proof}
(1)
It is easy to see that 
$\ext{G}$ is a subgroup of $G\times G'$. 
From the assumption that $F$ is a quandle homomorphism, 
we have $F\circ s_x=s_{F(x)}\circ F$ 
(resp. $F\circ s_x\circ s_q^{-1}=s_{F(x)}\circ s_{F(q)}^{-1}\circ F$) 
for any $x\in Q$ (and $q\in Q$), 
and hence $(s_x, s_{F(x)})\in\ext{G}$ 
(resp. $(s_x\circ s_q^{-1}, s_{F(x)}\circ s_{F(q)}^{-1})\in\ext{G}$). 
Since $G$ is generated by $s_x$ (resp. $s_x\circ s_q^{-1}$), 
we see that the projection $\pi_F: \ext{G}\to G$ is surjective. 

The assertion that $F$ is $\ext{G}$-equivariant 
follows from the definition of $\ext{G}$. 

If we assume that $(g, g')\in \ext{G}$, i.e. $F\circ g=g'\circ F$, 
then we have 
\[
F\circ s_q\circ g\circ s_q^{-1} = s_{F(q)}\circ g'\circ s_{F(q)}^{-1}\circ F, 
\]
i.e. 
$(\mathrm{conj}_G(s_q)(g), \mathrm{conj}_{G'}(s_{F(q)})(g'))\in \ext{G}$. 
Thus $\ext{G}$ is invariant 
under $\mathrm{conj}_G(s_q)\times \mathrm{conj}_{G'}(s_{F(q)})$. 

Note that in general, 
for an automorphism $g$ of $Q$ satisfying $g(q)=q$, 
\[
s_q\circ g\circ s_q^{-1} = s_q\circ s_{g(q)}^{-1}\circ g = g 
\]
holds. 
Thus, if $\ext{g}=(g, g')\in\ext{G}$ is in the stabilizer $\ext{G}_q$ of $q$, 
then we have $\mathrm{conj}_G(s_q)(g)=g$. 
Since $F$ is $\ext{G}$-equivariant, 
$F(q)$ is fixed by $\ext{g}$, or $g'$. 
Hence $\mathrm{conj}_{G'}(s_{F(q)})(g')=g'$. 
Thus $\ext{g}$ is fixed by $\mathrm{conj}_G(s_q)\times \mathrm{conj}_{G'}(s_{F(q)})$ 
and $\ext{F}(\ext{g}) = g' \in G'_{F(q)}$. 

(2)
By (1), $\ext{\Phi}$ fixes $\ext{G}_q$ elementwise, 
and we can define a quandle $(\ext{G}/\ext{G}_q, \rhd_{\ext{\Phi}})$ 
as in Proposition \ref{prop_conn_quandle}. 

Since $\pi_F$ is surjective and $\pi_F^{-1}(G_q)=\ext{G}_q$, 
we have a bijective map $\ext{G}/\ext{G}_q \to G/G_q$. 
Since $\pi_F$ commutes with $\ext{\Phi}$ and $\mathrm{conj}_G(s_q)$, 
it gives a quandle isomorphism 
$(\ext{G}/\ext{G}_q, \rhd_{\ext{\Phi}})\cong 
(G/G_q, \rhd_{\mathrm{conj}_G(s_q)})\cong Q$. 

It is easy to see that $\ext{F}$ commutes with 
$\ext{\Phi}$ and $\mathrm{conj}_{G'}(s_{F(q)})$. 
To see the  compatiblity with $F$, 
let $\pi: \ext{G}\to \ext{G}/\ext{G}_q$ and $\pi': G'\to G'/G'_{F(q)}$ 
denote the the projections and 
$\alpha: \ext{G}/\ext{G}_q\cong Q$ 
and $\alpha': G'/G'_{F(q)}\cong Q'$ the isomorphisms determined 
by the actions of $\ext{G}$ and $G'$ on $Q$ and $Q'$, 
and $\ext{g}=(g, g')$ an element of $\ext{G}$, and then we have 
\[
F(\alpha(\pi(\ext{g})))
= F(g\cdot q)=g'\cdot F(q)=\ext{F}(\ext{g}) F(q)= \alpha'(\pi'(\ext{F}(\ext{g}))). 
\]

(3)
In the continuous case, 
for any $x\in Q$, the map $G\times G'\to Q'\times Q': (g, g')\mapsto (F(g(x)), g'(F(x)))$ 
is smooth, complex analytic or regular, respectively. 
Thus $F(g(x))=g'(F(x))$ defines a closed subset of $G\times G'$ 
(since the diagonal set in $Q'\times Q'$ is closed). 
Taking the intersection over all $x\in Q$, 
we see that $\ext{G}$ is a closed subgroup. 

In the case $G=\mathrm{Tr}(Q)$, 
it is a connected group and so 
the restriction of $\pi_F: \ext{G}\to G$ to the identity component $\extc{G}$ 
of $\ext{G}$ is surjective as well. 

It is easy to see that, when $\ext{G}$ is replaced by $\extc{G}$, 
other statements in (1) and (2) are also satisfied. 
Thus $(\extc{G}, \extc{G}_q, \extc{\Phi})$ is a weak $\Phi$-space for $Q$ 
and $\extc{F}:=\ext{F}|_{\extc{G}}$ 
is a homomorphism of weak $\Phi$-spaces over $F$. 
\end{proof}

\begin{remark}
We do not necessarily have a natural homomorphism $G\to G'$, 
even if $Q$ and $Q'$ are transitive. 
For inner automorphism groups, 
this has been pointed out in \cite[\S\S2.7]{Eisermann14}. 
Here we will give an example with 
$G=\mathrm{Tr}(Q)$ and $G'=\mathrm{Tr}(Q')$. 

Let $X_0=\begin{pmatrix} i & 0 \\ 0 & -i \end{pmatrix}$ 
and let $Q$ be the conjugacy class of $X_0$ 
in $\mathrm{SL}(2, \bC)$, 
considered as a quandle by the conjugation operation. 
Then let $Q'=Q\times \bC^2$ with 
$(X, a)\rhd (Y, b)=(XYX^{-1}, a + X(b-a))$. 

Then $Q$ and $Q'$ are connected regular $s$-manifolds, 
hence transitive smooth quandles. 

We embed $Q$ into $Q'$ as the zero section $Q\times\{0\}$. 
Take $X_1=-X_0\in Q$. 
Then $s_{X_1}\circ s_{X_0}^{-1}$ is the identity map on $Q$, 
while on $Q'$ we have 
$s_{(X_1, 0)}\circ s_{(X_0, 0)}^{-1}(Y, b)=(Y, -b)$. 
Thus the natural correspondence 
does not give a well-defined map. 

In this case, the transvection group of $Q$ is $\mathrm{PSL}(2, \bC)$, 
while the subgroup of $\mathrm{Tr}(Q')$ 
generated by transvections from $Q$ is $\mathrm{SL}(2, \bC)$. 
\end{remark}

Next, we look at the infinitesimal counterpart. 
We use the notations in Construction \ref{const_lrst_from_ism}. 
Recall in particular that $\fg(T)=T\oplus \fh(T)$ for a Lie-Yamaguti algebra $T$ 
and therefore that an element of $\fg(T)$ 
ican be written as $(x, u)$ with $x\in T$ and $u\in\mathrm{End}(T)$. 

\begin{proposition}\label{prop_alg_hom_extended}
(1) 
For a homomorphism $f: T\to T'$ of Lie-Yamaguti algebras, 
let 
\begin{eqnarray*}
\tilde{\fg} & := & \{((x, u), (x', u'))\in \fg(T)\oplus \fg(T')\mid f(x)=x', u'\circ f=f\circ u\}, \\
\tilde{\fm} & := & \{((x, 0), (x', 0))\in \fg(T)\oplus \fg(T')\mid f(x)=x'\}, \\
\tilde{\fh} & := & \{((0, u), (0, u'))\in \fg(T)\oplus \fg(T')\mid u'\circ f=f\circ u\}. 
\end{eqnarray*}
Then $\tilde{\fg}\subseteq \fg(T)\oplus \fg(T')$ is a Lie subalgebra, 
$(\tilde{\fg}, \tilde{\fm}, \tilde{\fh})$ is a reductive triple, 
and the projection maps $\pi_f: \tilde{\fg}\to \fg(T)$ 
and $\tilde{f}: \tilde{\fg}\to \fg(T')$ 
are homomorphisms of reductive triples. 
The map $\pi_f$ is surjective and induces an isomorphism $\tilde{\fm}\cong T$ 
of Lie-Yamaguti algebras, 
and $\tilde{f}$ induces $f$. 

If $f$ is surjective, then $\pi_f$ is an isomorphism 
and there is a Lie algebra homomorphism $\tilde{f}: \fg(T)\to\fg(T')$ 
which restricts to $f$. 

(2)
Let $f: (T, \sigma)\to (T', \sigma')$ 
be a homomorphism of infinitesimal $s$-manifolds. 
Then $\tilde{\fg}$ defined as above 
is invariant under $\varphi_\sigma\times\varphi_{\sigma'}$. 
If $\tilde{\varphi}$ denotes 
the restriction of $\varphi_\sigma\times\varphi_{\sigma'}$ to $\tilde{\fg}$, 
then $(\tilde{\fg}, \tilde{\fh}, \tilde{\varphi})$ 
is a local regular $s$-triplet for $(T, \sigma)$, 
and $\tilde{f}$ is a homomorphism of local regular $s$-triplets 
over $f$. 
\end{proposition}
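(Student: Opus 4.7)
The plan is to verify every assertion by direct computation inside the ambient product $\fg(T)\oplus\fg(T')$, exploiting that $\tilde{\fg}$, $\tilde{\fm}$, $\tilde{\fh}$ are all cut out as subspaces of this product by the condition $u'\circ f=f\circ u$ (plus the condition $x'=f(x)$ on the $T$-parts). First I would check that $\tilde{\fg}$ is closed under the bracket of $\fg(T)\oplus\fg(T')$: given two elements $((x,u),(f(x),u'))$ and $((y,v),(f(y),v'))$ of $\tilde{\fg}$, the $T'$-component of the bracket equals $f(x)*f(y)+u'(f(y))-v'(f(x))$, which coincides with $f(x*y+u(y)-v(x))$ since $f$ is an LY-homomorphism and $u'f=fu$, $v'f=fv$; the $\mathrm{End}(T')$-component $D_{f(x),f(y)}+[u',v']$ still intertwines $f$ because $D_{f(x),f(y)}\circ f=f\circ D_{x,y}$ (from $f([x,y,z])=[f(x),f(y),f(z)]$) and the intertwining relation is closed under commutators. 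The decomposition $\tilde{\fg}=\tilde{\fm}\oplus\tilde{\fh}$ is just the restriction to $\tilde{\fg}$ of $\fg(T)\oplus\fg(T')=(T\oplus T')\oplus(\fh(T)\oplus\fh(T'))$, and reductivity reduces to the one-line identity $[(0,u),(y,0)]=(u(y),0)$ inside each factor. The maps $\pi_f$ and $\tilde{f}$ are coordinate projections, hence are Lie algebra homomorphisms respecting the reductive decompositions by construction.

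Next I would handle surjectivity of $\pi_f$ by lifting an arbitrary $u\in\fh(T)$ to some $u'\in\fh(T')$ satisfying $u'\circ f=f\circ u$: send each generator $D_{x,y}$ to $D_{f(x),f(y)}$ (the intertwining $D_{f(x),f(y)}\circ f=f\circ D_{x,y}$ was just noted), extend linearly, and propagate through commutators using that the intertwining relation is stable under $[\ ,\ ]$. The induced bijection $\pi_f|_{\tilde{\fm}}\to T$ is then an LY-isomorphism because the $\tilde{\fm}$-component of the bracket of $((x,0),(f(x),0))$ and $((y,0),(f(y),0))$ is $((x*y,0),(f(x*y),0))$, matching $x*y$ under $\pi_f$, and analogously for $[\ ]$. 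When $f$ is surjective, the lift $u'$ is actually unique: every $u\in\fh(T)$ preserves $\ker f$ because $D_{y,z}$ does (from $f([y,z,v])=[f(y),f(z),0]=0$ when $f(v)=0$), and this property is closed under Lie brackets; hence each $u$ descends unambiguously to an endomorphism of $T/\ker f\cong T'$, making $\pi_f$ bijective, and $\tilde{f}\circ\pi_f^{-1}\colon\fg(T)\to\fg(T')$ is the claimed extension of $f$.

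For Part (2), invariance of $\tilde{\fg}$ under $\varphi_\sigma\times\varphi_{\sigma'}$ follows from $f\circ\sigma=\sigma'\circ f$ (giving $\sigma'(f(x))=f(\sigma(x))$) together with the identity $\sigma'u'\sigma'^{-1}\circ f=\sigma'u'f\sigma^{-1}=\sigma'fu\sigma^{-1}=f\circ\sigma u\sigma^{-1}$. To identify $\tilde{\fh}$ with the generalized $1$-eigenspace of $\tilde{\varphi}$, observe that this eigenspace is contained in the product of the generalized $1$-eigenspaces of $\varphi_\sigma$ and $\varphi_{\sigma'}$, which by Construction \ref{const_lrst_from_ism} are exactly $\fh(T)$ and $\fh(T')$; intersecting with $\tilde{\fg}$ returns $\tilde{\fh}$. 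This shows $(\tilde{\fg},\tilde{\fh},\tilde{\varphi})$ is a local regular $s$-triplet, and it is a local regular $s$-triplet for $(T,\sigma)$ via the isomorphism $\pi_f|_{\tilde{\fm}}\cong T$ of Part (1), which by construction intertwines the restriction of $\tilde{\varphi}$ with $\sigma$. That $\tilde{f}$ commutes with $\tilde{\varphi}$ and $\varphi_{\sigma'}$ and induces $f$ on quotients is immediate from its definition as a coordinate projection.

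The one genuinely nontrivial step is the lifting argument for $\pi_f$ in Part (1): it requires unwinding how $\fh(T)$ is presented from the $D_{x,y}$'s and checking that the intertwining relation $u'\circ f=f\circ u$ survives every step of this presentation, together with the $\ker f$-invariance observation when $f$ is surjective. Everything else is bookkeeping with the explicit formulas of $\fg(T)$ and the fact that $\tilde{\fg}$ lives inside the product of two copies of this construction.
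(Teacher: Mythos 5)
Your proof is correct and follows essentially the same route as the paper: closure of $\tilde{\fg}$ under the bracket via the intertwining relations $u'\circ f=f\circ u$ and $D_{f(x),f(y)}\circ f=f\circ D_{x,y}$, surjectivity of $\pi_f$ by lifting the generators $(x,0)$ and $(0,D_{x,y})$, uniqueness of the lift when $f$ is surjective, and the eigenvalue analysis of $\tilde{\varphi}$ in part (2). One caution: your phrase ``send each generator $D_{x,y}$ to $D_{f(x),f(y)}$ and extend linearly'' suggests constructing a well-defined linear map $\fh(T)\to\fh(T')$, which need not exist for non-surjective $f$ (the $D_{x,y}$ only span $\fh(T)$ and may satisfy relations not preserved by this assignment); fortunately surjectivity of $\pi_f$ only requires that each spanning element $(0,D_{x,y})$ have \emph{some} preimage, namely $((0,D_{x,y}),(0,D_{f(x),f(y)}))\in\tilde{\fh}$, which your computation does provide, exactly as in the paper.
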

\begin{proof}
(1)
It is clear that $\tilde{\fg}, \tilde{\fm}$ and $\tilde{\fh}$ are linear subspaces 
and that $\tilde{\fg}=\tilde{\fm}\oplus\tilde{\fh}$. 
So, we look at the following three types of commutators. 

For $((x, 0), (x', 0)), ((y, 0), (y', 0))\in\tilde{\fm}$, 
we have $f(x)=x'$ and $f(y)=y'$, 
and 
\[
[((x, 0), (x', 0)), ((y, 0), (y', 0))]
=((x*y, D_{x, y}), (x'*y', D_{x', y'})). 
\]
Since $f$ is a homomorphism, 
we have $f(x*y)=x'*y'$. 
Also, for any $z\in T$, 
\[
D_{f(x), f(y)}(f(z)) 
= [f(x), f(y), f(z)] = f([x, y, z]) = f(D_{x, y}(z)), 
\]
hence $D_{f(x), f(y)}\circ f = f\circ D_{x, y}$. 
Thus $((x*y, D_{x, y}), (x'*y', D_{x', y'}))$ belongs to $\tilde{\fg}$. 

If $((0, u), (0, u')), ((0, v), (0, v'))\in\tilde{\fh}$, 
then $u'\circ f=f\circ u$ and $v'\circ f=f\circ v$. 
We have 
\[
[((0, u), (0, u')), ((0, v), (0, v'))]
=((0, [u, v]), (0, [u', v'])), 
\]
and 
$[u', v']\circ f= u'\circ v'\circ f- v'\circ u'\circ f 
= f\circ u\circ v-f \circ v\circ u = f\circ [u, v]$. 
Thus $\tilde{\fh}$ is closed under the Lie bracket. 

If $((0, u), (0, u'))\in\tilde{\fh}$ and $((x, 0), (x', 0))\in\tilde{\fm}$, 
then $f(x)=x'$ and $u'\circ f=f\circ u$, 
and 
\[
[((0, u), (0, u')), ((x, 0), (x', 0))]
=((u(x), 0), (u'(x'), 0)), 
\]
which belongs to $\tilde{\fm}$ since $f(u(x))=u'(f(x))=u'(x')$. 
Thus $[\tilde{\fh}, \tilde{\fm}]\subseteq \tilde{\fm}$, and 
this concludes the proof of the statement that $\tilde{\fg}$ is a Lie subalgebra 
and 
$(\tilde{\fg}, \tilde{\fm}, \tilde{\fh})$ is a reductive triple. 

Obviously, the projections are Lie algebra homomorphisms, 
and map $\tilde{\fm}$ into $T$ and $T'$ 
and $\tilde{\fh}$ into $\fh(T)$ and $\fh(T')$. 
Hence they are homomorphisms of reductive triples. 

To show the surjectivity of $\pi_f$, 
it suffices to show that $(x, 0)$ and $(0, D_{x, y})$ 
are in the image for any $x, y\in T$. 
The former is clear from $((x, 0), (f(x), 0))\in\tilde{\fm}$. 
For the latter, 
we already saw that $D_{f(x), f(y)}\circ f = f\circ D_{x, y}$, 
which means that $((0, D_{x, y}), (0, D_{f(x), f(y)}))\in\tilde{\fh}$, 
and therefore that $(0, D_{x, y})\in\mathrm{Im}\ \pi_f$. 

It is clear that $\tilde{\fm}$ maps to $T$ bijectively. 
Since $\pi_f$ is a homomorphism of reductive triples, 
the induced map $\tilde{\fm}\to T$ is an isomorphism of Lie-Yamaguti algebras. 
Similarly, the map $\tilde{\fm}\to T'$ induced by $\tilde{f}$ 
is a homomorphism of Lie-Yamaguti algebras 
which can be identified with $f$. 

Finally, if $f$ is surjective, 
then the condition $u'\circ f=f\circ u$ determines $u'$ uniquely 
for a given $u$, 
and $\pi_f$ is an isomorphism.

(2)
The assumption means that $f\circ\sigma=\sigma'\circ f$. 
Then, for $((x, 0), (x', 0))\in\tilde{\fm}$ we have $f(x)=x'$ and 
\[
(\varphi_\sigma\times \varphi_{\sigma'})((x, 0), (x', 0)) 
= ((\sigma(x), 0), (\sigma'(x'), 0)), 
\]
and from $f(\sigma(x))=\sigma'(f(x))=\sigma'(x')$ 
we see that $\tilde{\fm}$ is stable under $\varphi_\sigma\times \varphi_{\sigma'}$.  

For $((0, u), (0, u'))\in\tilde{\fh}$, 
$u\in\fh(T)$ and $u'\in\fh(T')$ imply 
$\varphi_\sigma(u)=u$ 
and $\varphi_{\sigma'}(u')=u'$. 
Thus $((0, u), (0, u'))$ is fixed by $\varphi_\sigma\times \varphi_{\sigma'}$, 
and in particular $\tilde{\fh}$ is stable under $\varphi_\sigma\times \varphi_{\sigma'}$. 

Thus $\varphi_\sigma\times \varphi_{\sigma'}$ restricts to 
an automorphism $\tilde{\varphi}$ on $\tilde{\fg}$, 
and $\tilde{\fh}$ is elementwise fixed by $\varphi$. 
Since $\tilde{\varphi}|_{\tilde{\fm}}$ can be identified 
with $\sigma$ under the identification of $\tilde{\fm}$ with $T$, 
it does not have $1$ as an eigenvalue. 
Thus $(\tilde{\fg}, \tilde{\fh}, \tilde{\varphi})$ 
is a local regular $s$-triplet. 

Since $\pi_f$ commutes with $\tilde{\varphi}$ and $\varphi_\sigma$, 
it is a homomorphism of local regular $s$-triplets, 
and therefore $(\tilde{\fg}, \tilde{\fh}, \tilde{\varphi})$ 
is a local regular $s$-triplet for $(T, \sigma)$. 

Finally, $\tilde{f}$ commutes with $\tilde{\varphi}$ and $\varphi_{\sigma'}$, 
and induces $f$ on $\tilde{\fm}\cong T$, 
so it is a homomorphism of local regular $s$-triplets over $f$. 
\end{proof}

Now let us prove Theorem \ref{thm_hom}. 

\begin{proof}
(1)
Let 
\begin{itemize}
\item
$G=\mathrm{Tr}(Q)$, 
$\Phi=\mathrm{conj}_G(s_q)$, $\fg=\mathrm{Lie}(G)$, 
$\fh=\mathrm{Lie}(G_q)$, 
$\varphi=d_e\Phi$ and 
\item
$G'=\mathrm{Tr}(Q')$, 
$\Phi'=\mathrm{conj}_{G'}(s_{q'})$, 
$\fg'=\mathrm{Lie}(G')$, 
$\fh'=\mathrm{Lie}(G'_{q'})$, 
$\varphi'=d_e\Phi'$. 
\end{itemize}

By Proposition \ref{prop_group_hom_extended} (3) 
and Remark \ref{rem_phi_sp} (1), 
we have regular $\Phi$-spaces $(\tilde{G},\tilde{G}_q, \tilde{\Phi})$ 
and $(G', G'_{q'}, \Phi')$ for $Q$ and $Q'$ 
and a homomorphism $\tilde{F}: \tilde{G}\to G'$ 
of regular $\Phi$-spaces over $F$. 
Let $\tilde{\fg}=\mathrm{Lie}(\tilde{G})$, 
$\tilde{\fh}=\mathrm{Lie}(\tilde{G}_q)$ 
and $\tilde{\varphi}=d_e\tilde{\Phi}$. 
Then we have a homomorphism 
$d_e\tilde{F}: (\tilde{\fg}, \tilde{\fh}, \tilde{\varphi})\to (\fg', \fh', \varphi')$ 
of local regular $s$-triplets. 
This induces a homomorphism 
$\overline{d_e\tilde{F}}: \tilde{\fg}/\tilde{\fh} \to \fg'/\fh'$ of infinitesimal $s$-manifolds. 
The domain and the target can be identified with 
$T_qQ$ and $T_{q'}Q'$ as infinitesimal $s$-manifolds, 
and the map $\overline{d_e\tilde{F}}$ with $d_qF$. 
Thus $d_qF$ is a homomorphism of infinitesimal $s$-manifolds. 

Let $\fm$, $\tilde{\fm}$ and $\fm'$ denote 
the sum of eigenspaces of $d_e\Phi$, $d_e\tilde{\Phi}$ and $d_e\Phi'$ 
for the eigenvalues different from $1$. 
Then $d_e\pi_F$ restricts to an isomorphism $\alpha_F: \tilde{\fm}\to \fm$ 
since $d_e\pi_F$ commutes with $d_e\tilde{\Phi}$ and $d_e\Phi$. 
Similarly, 
$d_e\tilde{F}$ restricts to a map  $\beta_F: \tilde{\fm}\to\fm'$. 
Since the map $\fg/\fh \cong \tilde{\fg}/\tilde{\fh} \to \fg'/\fh'$ induced by $d_e\pi_F$ and $d_e\tilde{F}$ 
can be identified with $d_qF$, 
we may think of $\beta_F\circ \alpha_F^{-1}$ as $d_qF$. 
For a small neighborhood $U$ of $0$ in $\fm$, 
$\exp(v), \exp(\alpha_F^{-1}(v))$ and $\exp(d_qF(v))$ converge for any $v\in U$ 
and the map $U\to Q; v\mapsto\exp(v)q$ is an open embedding. 
Then, if $x = \exp(v)q$, 
\begin{eqnarray*}
F(x) & = & F(\exp(v)q)=F(\exp(\alpha_F^{-1}(v))q) \\
 & = & \tilde{F}(\exp(\alpha_F^{-1}(v)))\cdot F(q) =\exp(\beta_F(\alpha_F^{-1}(v)))q' \\
 & = & \exp((d_qF)(v))q'. 
\end{eqnarray*}
Thus $F(x)$ depends only on $(d_qF)(v)$. 

If $F_1, F_2: Q\to Q'$ are quandle homomorphisms, 
these arguments show that 
\[
\{x\in Q \mid F_1(x)=F_2(x), d_xF_1=d_xF_2\}
\]
is open. 
It is obviously closed. 
Thus, if $F_1(q)=F_2(q)$ and $d_qF_1=d_qF_2$, 
it is the whole $Q$, 
i.e. $F_1=F_2$. 

(2)
Since the universal cover of a regular $s$-manifold is 
again a regular $s$-manifold, 
it suffices to show the assertion for simply-connected $Q$ and $Q'$. 

Let $(T, \sigma)=(T_qQ, d_qs_q)$ and $(T', \sigma')=(T_{q'}Q', d_{q'}s_{q'})$. 
By Proposition \ref{prop_alg_hom_extended} (2), 
there is a local regular $s$-triplet $(\tilde{\fg}, \tilde{\fh}, \tilde{\varphi})$ 
for $(T, \sigma)$ 
and a homomorphism 
$\tilde{f}: (\tilde{\fg}, \tilde{\fh}, \tilde{\varphi}) \to (\fg(T'), \fh(T'), \varphi_{\sigma'})$ 
of local regular $s$-triplets over $f$. 

Let $\tilde{G}$ and $G'$ be the connected and simply-connected Lie groups 
corresponding to $\tilde{\fg}$ and $\fg(T')$, 
and $\tilde{\Phi}$ and $\Phi'$ their automorphisms corresponding to 
$\tilde{\varphi}$ and $\varphi_{\sigma'}$. 
Then $\tilde{G}^{\tilde{\Phi}}$ and $(G')^{\Phi'}$ are closed. 
Let $\tilde{H}$ and $H'$ be their connected components at the identity elements. 
By the uniqueness part of Theorem \ref{thm_reg_s_mfd} (5), 
$(\tilde{G}, \tilde{H}, \tilde{\Phi})$ and $(G', H', \Phi')$ are regular $\Phi$-spaces 
for $Q$ and $Q'$, respectively. 
More precisely, $\tilde{G}/\tilde{H}\to Q$ can be chosen so that 
the induced map $T_qQ=T\cong T_{e\tilde{H}}(\tilde{G}/\tilde{H})\to T_qQ$ 
is the identity map, and similarly for $Q'$. 
Let $\tilde{F}: \tilde{G}\to G'$ be the 
Lie group homomorphism corresponding to $\tilde{f}$. 
Then it is a homomorphism of regular $\Phi$-spaces, 
hence induces a homomorphism $F: Q\to Q'$ of regular $s$-manifolds, 
which satisfies $d_qF=f$. 
\end{proof}

\subsection{Ideals of a Lie-Yamaguti algebra and subalgebras of the associated Lie algebra}

We will later use the notion of an ideal of a Lie-Yamaguti algebra 
and some related facts. 
\begin{definition}(\cite[\S1]{Yamaguti1969})
Let $T$ be a Lie-Yamaguti algebra. 

(1) 
A \emph{subalgebra} of $T$ is a vector subspace $U\subseteq T$ 
satisfying $U*U\subseteq U$ and $[U, U, U]\subseteq U$. 

(2) 
An \emph{ideal} of $T$ is a vector subspace $U\subseteq T$ 
satisfying $U*T\subseteq U$ and $[U, T, T]\subseteq U$. 
(It follows that $T*U, [T, U, T], [T, T, U]$ are also contained in $U$.)

(3)
An ideal $U$ of $T$ is an \emph{abelian ideal} 
if $U*U=0$ and $[T, U, U]=0$. 
(It follows that $[U, T, U]=[U, U, T]=0$.)

(4)
If $V$ is another Lie-Yamaguti algebra, 
an \emph{extension} of $T$ by $V$ is an exact sequence 
$0\to V\to T^*\to T\to 0$ of vector spaces 
where $T^*$ is endowed with a structure of Lie-Yamaguti algebra 
and the mappings are homomorphisms. 
Note that the image of $V$ is an ideal of $T^*$. 
\end{definition}

The kernel of a homomorphism of Lie-Yamaguti algebras 
is an ideal, 
and the quotient space of a Lie-Yamaguti algebras 
by an ideal is a Lie-Yamaguti algebra in a natural way.

\begin{lemma}\label{lem_subalg_from_ideal}
For subspaces $U_1, U_2$ of $T$, 
let us denote by $\fh(U_1, U_2)$ 
the subspace of $\fh(T)$ generated by $\{D_{x, y}\mid x\in U_1, y\in U_2\}$. 

(1)
If $U$ is an ideal of $T$, 
then $U\oplus \fh(U, T)$ is 
an ideal of $\fg(T)=T\oplus \fh(T)$ 
and $U\oplus \fh(U, U)$ is 
an ideal of $U\oplus \fh(U, T)$. 
Also, $\fh(U, T)$ 
and $\fh(U, U)$ are ideals of $\fh(T)$, 
and hence are Lie subalgebras. 

(2)
If $f: T\to T'$ is a surjective homomorphism of Lie-Yamaguti algebras 
and $U=\mathrm{Ker}(f)$, 
then $U\oplus \fh(U, T)$ is contained in the kernel 
of the natural map $\tilde{f}: \fg(T)\to \fg(T')$. 

(3)
If $U$ is an abelian ideal of $T$, 
then $\fh(U, U)=0$ and 
the $U\oplus\{0\}$ is an abelian ideal of $U\oplus \fh(U, T)$. 
\end{lemma}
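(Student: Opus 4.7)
The plan is to verify each assertion by direct computation in $\fg(T) = T \oplus \fh(T)$, using the explicit bracket formula from Construction \ref{const_lrst_from_ism},
\[
[(x, u), (y, v)] = (x * y + u(y) - v(x),\; D_{x, y} + [u, v]),
\]
together with the Lie-Yamaguti axioms. A central computational tool is the commutator identity
\[
[D_{a, b}, D_{c, d}] = D_{[a, b, c], d} + D_{c, [a, b, d]},
\]
which falls out of (LY6)$'$ (the derivation property of $D_{a, b}$ on the ternary bracket) after noting that $D_{a, b}(D_{c, d}(z)) - D_{c, d}(D_{a, b}(z))$ collapses to two terms once the shared summand $[c, d, [a, b, z]]$ cancels. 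Combined with the antisymmetry $D_{p, q} = -D_{q, p}$ coming from (LY2)$'$, this formula controls all commutators of $D$-operators that will arise.

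For part (1), I would expand $[(x, u), (y, v)]$ for $(x, u) \in U \oplus \fh(U, T)$ and $(y, v) \in \fg(T)$, and verify each summand: $x * y \in U * T \subseteq U$; $u(y) = \sum [a_i, b_i, y]$ with $a_i \in U$ lies in $[U, T, T] \subseteq U$; $v(x) \in [T, T, U] \subseteq U$; $D_{x, y} \in \fh(U, T)$ by construction; and $[u, v] \in \fh(U, T)$ by applying the commutator identity and using antisymmetry to normalize each resulting $D$-operator so that its $U$-argument sits in the first slot. The analogous subalgebra and ideal statements for $\fh(U, U)$ and $U \oplus \fh(U, U)$ follow by restricting the arguments further and tracking memberships, again via the full list of ideal consequences $U * T,\; T * U,\; [U, T, T],\; [T, U, T],\; [T, T, U] \subseteq U$ recorded in the definition of an ideal.

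For part (2), under the isomorphism from Proposition \ref{prop_alg_hom_extended}(1) (available because $f$ is surjective), $\tilde f$ sends $(x, u) \in T \oplus \fh(T)$ to $(f(x), u')$, where $u'$ is the unique endomorphism of $T'$ with $u' \circ f = f \circ u$. For $x \in U = \mathrm{Ker}(f)$ the first coordinate vanishes. For $u = D_{a, b}$ with $a \in U$, $b \in T$, one computes $f(D_{a, b}(z)) = f([a, b, z]) = [f(a), f(b), f(z)] = 0$ using $f(a) = 0$ and (LY2)$'$; surjectivity of $f$ then forces $u' = 0$, so $(0, D_{a, b})$ also lies in $\mathrm{Ker}(\tilde f)$.

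For part (3), abelianness of $U$ yields $[U, U, T] = 0$, so $D_{a, b}$ is the zero endomorphism of $T$ for every $a, b \in U$, giving $\fh(U, U) = 0$. The bracket of two elements of $U \oplus \{0\}$ inside $U \oplus \fh(U, T)$ reduces to $(x * y, D_{x, y}) = (0, 0)$, since $U * U = 0$ and $D_{x, y} = 0$ by the preceding observation; this supplies the abelianness, and combined with the ideal statement in part (1) yields the full claim. The main obstacle throughout is really only bookkeeping: several memberships must be verified, and the slightly delicate ones rely on combining the commutator identity with the antisymmetry of $D$ to rewrite a mixed $D$-operator with one argument in $U$ and the other in $T$ as an element of $\fh(U, T)$ in standard form. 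Nothing is conceptually deep, but it is essential to use the \emph{full} set of ideal consequences rather than only the defining pair to close the computations.
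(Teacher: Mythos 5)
Your proposal is correct and follows essentially the same route as the paper: part (1) rests on the same commutator identity $[D_{x,y},D_{z,v}]=D_{[x,y,z],v}+D_{z,[x,y,v]}$ (equivalent to (LY6)) together with the antisymmetry of $D$ and the full list of ideal consequences, while parts (2) and (3) match the paper's arguments verbatim in substance. The only difference is that you spell out the membership checks that the paper delegates to Yamaguti's \S1 Proposition, and your bookkeeping (including the normalization of mixed $D$-operators via antisymmetry) is accurate.
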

\begin{proof}
(1)
This is \cite[\S1, Proposition]{Yamaguti1969}. 
The proof is straightforward except that we use the identity 
$[D_{x, y}, D_{z, v}]=D_{[x, y, z], v}+D_{z, [x, y, v]}$, 
which is equivalent to (LY6). 

(2)
For $(x, u)\in T\oplus \fh(T)$, 
we have $\tilde{f}(x, u)=(f(x), u')$ with $u'\circ f=f\circ u$. 
Thus, if $x\in U$, $\tilde{f}(x, 0)=(0, 0)$ holds. 
We have only to show $u'=0$ 
for $u=D_{x, y}$ with $x\in U$ and $y\in T$. 
For $z\in T$, 
we have $u'(f(z))=f(u(z))=f([x, y, z])$, 
and this is $0$ since $[x, y, z]\in U=\mathrm{Ker}(f)$. 
From the surjectivity of $f$, 
we conclude that $u'=0$. 

(3)
For any $x, y\in U$, 
it follows from the definition of an abelian ideal 
that $D_{x, y}=0$, and hence $\fh(U, U)=0$. 
Thus $U$ is an ideal of $U\oplus \fh(U, T)$ by (1). 
For $x, y\in U$, 
we have $[(x, 0), (y, 0)]= (x*y, D_{x, y})=0$, 
and so $U$ is an abelian ideal of $U\oplus \fh(U, T)$. 
\end{proof}

\section{Quandle modules over quandle spaces}

As a candidate for the notion of modules over a rack or a quandle, 
one can consider an action of $Q$ on an additive group $M$, 
denoted by $x\rhd m$, 
and impose the condition $x\rhd(y\rhd m)=(x\rhd y)\rhd(x\rhd m)$. 
This is equivalent to considering modules over 
the associated group $\mathrm{As}(X)$, which is defined as the group 
generated by $\{g_x \mid x\in X\}$ subject to the condition 
$g_{x\rhd y}=g_xg_yg_x^{-1}$ 
(see \cite{EG2003}). 
A more general and natural notion of quandle modules and rack modules 
was introduced in \cite{AG2003} and \cite{Jackson2005}. 

\begin{definition}\label{def_modules}
Let $X$ be a rack. 
A rack module over $X$ is a triplet 
\[
((A_x)_{x\in X}, (\eta_{xy})_{x, y\in X}, (\tau_{xy})_{x, y\in X}), 
\]
where $A_x$ is an additive group for any $x\in X$, 
$\eta_{xy}: A_y\to A_{x\rhd y}$ is an isomorphism 
and 
$\tau_{xy}: A_x\to A_{x\rhd y}$ is a homomorphism 
for any $x, y\in X$, 
satisfying the following conditions: 
\begin{itemize}
\item[(1)]
$\eta_{x, y\rhd z}\eta_{yz}=\eta_{x\rhd y, x\rhd z}\eta_{xz}$, 
\item[(2)]
$\eta_{x, y\rhd z}\tau_{yz}=\tau_{x\rhd y, x\rhd z}\eta_{xy}$, 
\item[(3)]
$\tau_{x, y\rhd z}=\eta_{x\rhd y, x\rhd z}\tau_{xz}+\tau_{x\rhd y, x\rhd z}\tau_{xy}$. 
\end{itemize}
If $X$ is a quandle, 
it is called a quandle module if it further satisfies
$\eta_{xx}+\tau_{xx}=id_{A_x}$. 
\end{definition}

\begin{remark}
(1)
To see that this in fact generalizes 
the notion of a quandle module as an $\mathrm{As}(X)$-module, 
see Example \ref{ex_as_module}. 

(2)
Note that the terms in conditions (1), (2) and (3) 
exhaust naturally conceivable ways 
of going from $A_z$, $A_y$ and $A_x$ to $A_{x\rhd(y\rhd z)}=A_{(x\rhd y)\rhd(x\rhd z)}$ 
using $\eta$'s and $\tau$'s. 
\end{remark}

This definition is natural in that rack modules and quandle modules 
are in one-to-one correspondence with  
the abelian group objects in the categories of racks and quandles over $X$. 

\begin{definition}
Let $\mathbf{C}$ be a category with fiber products 
and $X$ an object of $\mathbf{C}$. 
An abelian group object in $\mathbf{C}$ over $X$ 
(also called a Beck module over $X$) is a tuple $(A, \pi, \zeta, \alpha, \iota)$ 
where
\begin{itemize}
\item
$A$ is an object of $\mathbf{C}$ and $\pi: A\to X$ is a momorphism, 
by which $A$ is regarded as an object over $X$, and 
\item
$\zeta: X\to A$, $\alpha: A\times_X A\to A$ 
and $\iota: A\to A$ are morphisms over $X$, 
\end{itemize}
satisfying the ``axiom of abelian groups'': 
\begin{itemize}
\item
$\alpha\circ(\alpha\times_X id_A)=\alpha\circ (id_A\times_X\alpha)$ 
as morphisms $A\times_X A\times_X A\to A$, 
\item
$\alpha\circ \sigma=\alpha$, where $\sigma: A\times_X A\to A\times_X A$ 
is the morphism interchanging the factors, 
\item
$\alpha\circ(id_A\times_X \zeta)=\alpha\circ(\zeta\times_X id_A)=id_A$ 
as endomorphisms of $A\cong A\times_X X\cong X\times_X A$, and 
\item
$\alpha\circ(id_A, \iota)=\alpha\circ(\iota, id_A)= \zeta\circ\pi$ 
as endomorphisms of $A$. 
\end{itemize}
A morphism $(A, \pi, \zeta, \alpha, \iota)\to (A', \pi', \zeta', \alpha', \iota')$ 
is defined as a morphism $f: A\to A'$ compatible with other data, 
i.e. $\pi'\circ f=\pi$, $f\circ \zeta=\zeta'$, 
$f\circ \alpha = \alpha'\circ(f\times_X f)$ and $f\circ \iota = \iota'\circ f$. 
\end{definition}

\begin{proposition}\label{prop_quandle_from_module}
(\cite[Theorem 2.18]{AG2003}, \cite[Theorem 2.2, Theorem 2.6]{Jackson2005}) 
Let $A=\coprod_{x\in X} A_x$, $\pi: A\to X$ the projection, 
$\zeta: X\to A$ the zero section, 
$\alpha: A\times_X A\to A$ and $\iota: A\to A$ the fiberwise operations 
of taking the sum and the inverse, respectively. 
We write $(x, a)$ to refer to an element $a\in A_x$ 
and define $(x, a)\rhd (y, b)=(x\rhd y, \eta_{xy}(b)+\tau_{xy}(a))$. 

Then $A$ is a rack, 
and $\pi$, $\zeta$, $\alpha$ and $\iota$ are rack homomorphisms. 
If $X$ is a quandle and $(A_x)_{x\in X}$ is a quandle module, 
then $A$ is a quandle and $\pi$, $\zeta$, $\alpha$ and $\iota$ are quandle homomorphisms. 

Furthermore, this correspondence gives an equivalence 
between the category of rack (or quandle) modules over $X$ and 
the category of abelian group objects in the category 
of racks (resp. quandles) over $X$. 
\end{proposition}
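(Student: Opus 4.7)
The plan is to establish the proposition in three stages. First, verify directly that the proposed operation $(x,a) \rhd (y,b) = (x \rhd y, \eta_{xy}(b) + \tau_{xy}(a))$ gives a rack (or quandle) structure on $A = \coprod_{x \in X} A_x$. Unique solvability of $(x,a) \rhd (y',b') = (y,b)$ holds since $y' = x \rhd^{-1} y$ is uniquely determined in $X$, and then $b' = \eta_{x,y'}^{-1}(b - \tau_{x,y'}(a))$ is uniquely determined because $\eta_{x,y'}$ is an isomorphism. Self-distributivity comes out by expanding both $(x,a) \rhd ((y,b) \rhd (z,c))$ and $((x,a) \rhd (y,b)) \rhd ((x,a) \rhd (z,c))$ in $A_{x \rhd (y \rhd z)} = A_{(x \rhd y) \rhd (x \rhd z)}$ and matching the contributions from $a$, $b$, and $c$ separately: the coefficient of $c$ gives condition (1), the coefficient of $b$ gives (2), and the coefficient of $a$ gives (3). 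The quandle idempotency $(x,a) \rhd (x,a) = (x,a)$ reduces exactly to $\eta_{xx} + \tau_{xx} = \mathrm{id}_{A_x}$.

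Second, verify that $\pi, \zeta, \alpha, \iota$ are rack (or quandle) homomorphisms, so that $A$ together with these data becomes an abelian group object over $X$. Compatibility of $\pi$ with $\rhd$ is the first-coordinate equation. For $\zeta$ we use that $\eta_{xy}$ and $\tau_{xy}$ are group homomorphisms and therefore send $0$ to $0$. For $\alpha$ and $\iota$ we use the \emph{additivity} of $\eta_{xy}$ and $\tau_{xy}$ in their arguments. The abelian group axioms over $X$ (associativity, commutativity, unit, inverse) hold fiberwise because each $A_x$ is already an abelian group.

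Third, to obtain an equivalence of categories, construct an inverse functor. Given an abelian group object $(A, \pi, \zeta, \alpha, \iota)$, set $A_x := \pi^{-1}(x)$, which inherits an abelian group structure from the fiberwise restrictions of $\zeta, \alpha, \iota$. Define $\eta_{xy}(b) := \zeta(x) \rhd b$ and $\tau_{xy}(a) := a \rhd \zeta(y)$; both land in $A_{x \rhd y}$ since $\pi$ is a rack homomorphism. The crucial point is that $\rhd: A \times A \to A$ is biadditive in the sense that, restricted to $A_x \times A_y \to A_{x \rhd y}$, it decomposes as $(a, b) \mapsto \eta_{xy}(b) + \tau_{xy}(a)$; this follows by writing $(x, a) = \zeta(x) + (x, a)$ and $(y, b) = \zeta(y) + (y, b)$ and using that $\alpha$ is a rack morphism (so $\rhd$ is additive in each slot over $X$), together with $\zeta(x) \rhd \zeta(y) = \zeta(x \rhd y)$ playing the role of the zero. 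Once this decomposition is in hand, the self-distributivity of $\rhd$ in $A$ translates term by term, in exactly the same fashion as in stage one, into the three module axioms; and the quandle idempotency similarly gives $\eta_{xx} + \tau_{xx} = \mathrm{id}_{A_x}$.

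Finally, check that these constructions are functorial and mutually inverse up to natural isomorphism. A module morphism $(f_x): (A_x) \to (A'_x)$ assembles to a morphism $\coprod f_x: A \to A'$ of abelian group objects (compatibility with $\rhd$ is the defining condition of a module morphism), and conversely any morphism of abelian group objects restricts fiberwise to a module morphism. The main obstacle I anticipate is the biadditivity step in stage three: untangling how $\rhd$ on $A$ restricts to a sum of the two partial operations $\eta$ and $\tau$ requires a careful use of $\alpha$ being a rack homomorphism in both arguments simultaneously, and this is where the abelian group structure on the fibers is genuinely needed rather than merely an abstract base-point structure.
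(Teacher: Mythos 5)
Your argument is correct and is the standard direct verification; the paper itself gives no proof of this proposition, deferring to \cite{AG2003} and \cite{Jackson2005}, where essentially the same computations (matching the $a$-, $b$-, $c$-contributions against axioms (1)--(3), and recovering $\eta$, $\tau$ from $\zeta(x)\rhd b$ and $a\rhd\zeta(y)$ via the biadditivity supplied by $\alpha$ being a rack morphism) appear. The only point worth making explicit in your third stage is that $\eta_{xy}(b)=\zeta(x)\rhd b$ is an \emph{isomorphism} $A_y\to A_{x\rhd y}$ and not merely a homomorphism: this follows because $s_{\zeta(x)}$ is a rack automorphism of $A$ covering $s_x$, hence restricts to a bijection between the fibers over $y$ and over $x\rhd y$.
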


Over topological racks and quandles, 
the notion of a module was defined in \cite{EM2016}. 
We give a definition in a somewhat more abstract way, 
for topological quandles, quandle manifolds and quandle varieties, 
and their counterparts for racks. 

Let $(X, \mathcal{A})$ be one of the following: 
\begin{itemize}
\item
$X$ is a topological rack 
and $\mathcal{A}$ is a topological space 
together with a continuous map $\mathcal{A}\to X$ 
and an addition operation $\mathcal{A}\times_X\mathcal{A}\to X$ 
which makes $\mathcal{A}$ a relative topological additive group. 
\item
$X$ is a rack manifold (of a certain class) 
and $\mathcal{A}$ is a vector bundle over $X$. 
(This is a special case of the above.)
\item
$X$ is a rack variety 
and $\mathcal{A}$ is a variety over $X$ 
endowed with an addition operation $\mathcal{A}\times_X\mathcal{A}\to X$ 
which makes $\mathcal{A}$ a relative commutative group variety. 
\item
$X$ is a rack variety and $\mathcal{A}$ is a coherent sheaf on $X$. 
\end{itemize}

Let $\pi_i: X^2\to X$ and $p_i: X^3\to X$ 
denote the $i$-th projection maps, 
$p_{ij}=(p_i, p_j): X^3\to X^2$, 
$\mu: X^2\to X$ the quandle operation and 
$\mu_{ij}:=\mu\circ p_{ij}: X^3\to X$. 

\begin{definition}\label{def_modules_sp}
(1)
A \emph{rack module} over $X$ is a triplet 
$(\mathcal{A}, \eta, \tau)$, 
where $\mathcal{A}$ is as above, 
$\eta: \pi_2^*\mathcal{A}\to\mu^*\mathcal{A}$ is an isomorphism (in the appropriate category) 
over $X\times X$ 
and 
$\tau: \pi_1^*\mathcal{A}\to\mu^*\mathcal{A}$ is a homomorphism over $X\times X$, 
satisfying the following conditions: 
\begin{itemize}
\item[(a)]
$(p_1, \mu_{23})^*\eta\circ p_{23}^*\eta
=
(\mu_{12}, \mu_{13})^*\eta\circ p_{13}^*\eta$, 
\item[(b)]
$(p_1, \mu_{23})^*\eta\circ p_{23}^*\tau
=
(\mu_{12}, \mu_{13})^*\tau \circ p_{12}^*\eta$, 
\item[(c)]
$(p_1, \mu_{23})^*\tau
=
(\mu_{12}, \mu_{13})^*\eta\circ p_{13}^*\tau
+(\mu_{12}, \mu_{13})^*\tau\circ p_{12}^*\tau$. 
\end{itemize}
Here in (1), for example, 
the domains of $p_{23}^*\eta$ and 
$p_{13}^*\eta$ are 
$p_{23}^*\pi_2^*\mathcal{A}$ and $p_{13}^*\pi_2^*\mathcal{A}$ respectively, 
and both can be identified with (or are equal to, depending on how the pullbacks are defined) 
$p_3^*\mathcal{A}$. 
Similarly, the codomain of $p_{23}^*\eta$ and the domain of $(p_1, \mu_{23})^*\eta$ are 
$p_{23}^*\mu^* \mathcal{A}$ 
and $(p_1, \mu_{23})^*\pi_2^*\mathcal{A}$, 
both of which can be identified with 
$\mu_{23}^*\mathcal{A}$, 
so the two morphisms can be composed.

(2)
If $X$ is a quandle, 
a rack module $(\mathcal{A}, \eta, \tau)$ is called a \emph{quandle module} over $X$ 
if it further satisfies $\Delta^*\eta+\Delta^*\tau=id_{\mathcal{A}}$, 
where $\Delta: X\to X\times X$ is the diagonal map. 
Here we use the natural identifications 
$\Delta^*\pi_1^*\mathcal{A}\cong 
\Delta^*\pi_2^*\mathcal{A}\cong
\Delta^*\mu^*\mathcal{A}\cong \mathcal{A}$. 

\smallbreak
In the case $X$ is a manifold and $\mathcal{A}$ is a vector bundle
or $X$ is an algebraic variety and $\mathcal{A}$ is a coherent sheaf, 
we will call $(\mathcal{A}, \eta, \tau)$ a \emph{linear} rack or quandle module. 
\end{definition}

\begin{remark}
If $X$ is a transitive rack variety, 
a linear (coherent) rack module $\mathcal{A}$ over $X$ is always locally free of constant rank, 
and we will identify it with the associated vector bundle. 
\end{remark}

\begin{definition}\label{def_regular_modules}
Let $Q$ be a connected regular $s$-manifold 
in the smooth, complex analytic or algebraic category 
and $(\mathcal{A}, \eta, \tau)$ a linear quandle module over $Q$. 
We say $\mathcal{A}$ is \emph{regular} 
if $id_{\mathcal{A}_q} - \eta_{qq}=\tau_{qq}: \mathcal{A}_q\to \mathcal{A}_q$ 
is invertible for any (or, equivalently, some) $q\in Q$. 
\end{definition}

\begin{definition}
Let $\mathcal{A}$ and $\mathcal{B}$ 
be rack (resp. quandle) modules over $X$. 

(1)
A rack (resp. quandle) module homomorphism $f: \mathcal{A}\to \mathcal{B}$ 
is a continuous additive map, 
a bundle map of locally constant rank, 
a homomorphism of $\mathcal{O}_X$-modules, etc., 
compatible with $\eta$ and $\tau$. 

More specifically, if $\eta^\mathcal{A}$, $\tau^\mathcal{A}$, 
$\eta^\mathcal{B}$ and $\tau^\mathcal{B}$ denote 
the homomorphisms defining the module structures, 
we require that 
$\eta^\mathcal{B}\circ \pi_2^*(f) =\mu^*(f)\circ\eta^\mathcal{A}$ 
and 
$\tau^\mathcal{B}\circ \pi_1^*(f) =\mu^*(f)\circ\tau^\mathcal{A}$. 

(2)
A rack (resp. quandle) submodule $\mathcal{A}'$ of $\mathcal{A}$ 
is a subspace whose fibers are subgroups, 
a subbundle, a coherent submodule, etc., 
which is closed under the maps $\eta$ and $\tau$. 
Formally, 
the requirement is that 
$\eta$ and $\tau$ maps $\pi_2^*\mathcal{A}'$ and $\pi_1^*\mathcal{A}'$
to $\mu^*\mathcal{A}'$, respectively. 
(In the coherent case, note that $\mu=\pi_2\circ (\pi_1, \mu)$ is flat 
since $(\pi_1, \mu)$ is an isomorphism, 
and hence that $\pi_1^*\mathcal{A}'$, $\pi_2^*\mathcal{A}'$ and $\mu^*\mathcal{A}'$ 
can be considered as subsheaves of 
$\pi_1^*\mathcal{A}$, $\pi_2^*\mathcal{A}$ and $\mu^*\mathcal{A}$, 
respectively.) 
\end{definition}

\begin{remark}
If $X$ is a transitive, 
a bundle map compatible with $\eta$ is automatically of constant rank. 
\end{remark}

\begin{proposition}\label{prop_module_category}
Let $X$ be a rack or quandle space as above. 

(1)
Rack (resp. quandle) modules over $X$ 
form a category by the homomorphisms defined above. 

(2)
In the case $X$ is a manifold and modules are vector bundles, 
if $f$ is a rack (resp. quandle) module homomorphism of modules over $X$ 
which is a bundle map of locally constant rank, 
then $\mathrm{Ker} f$, $\mathrm{Im} f$, and $\mathrm{Coker} f$ 
are rack (resp. quandle) modules in a natural way. 

In particular, if $X$ is transitive, 
then linear rack (resp. quandle) modules form an abelian category. 

(3)
Coherent rack (resp. quandle) modules over a rack (resp. quandle) variety 
form an abelian category. 
\end{proposition}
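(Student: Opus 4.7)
The plan is to treat the three parts separately, with the real content concentrated in parts (2) and (3). Part (1) is formal: I would observe that the identity map trivially commutes with $\eta$ and $\tau$, and that if $f\colon \mathcal{A}\to \mathcal{B}$ and $g\colon \mathcal{B}\to\mathcal{C}$ are homomorphisms then composing the compatibility equations gives
\[
\eta^{\mathcal{C}}\circ\pi_2^*(g\circ f) = \mu^*(g)\circ\eta^{\mathcal{B}}\circ\pi_2^*(f) = \mu^*(g\circ f)\circ\eta^{\mathcal{A}},
\]
and similarly for $\tau$, so $g\circ f$ is a homomorphism. This establishes the category structure.

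For (2), the plan is as follows. Given a linear rack (resp.\ quandle) module homomorphism $f\colon \mathcal{A}\to \mathcal{B}$ of locally constant rank, $\mathcal{K} := \mathrm{Ker}\,f$, $\mathcal{I} := \mathrm{Im}\,f$, and $\mathcal{C} := \mathrm{Coker}\,f$ exist as vector bundles by standard theory. I would then verify that $\eta$ and $\tau$ restrict or descend to give module structures. For the kernel, the relation $\mu^*(f)\circ\eta^{\mathcal{A}} = \eta^{\mathcal{B}}\circ\pi_2^*(f)$ shows that $\eta^{\mathcal{A}}$ sends $\pi_2^*\mathcal{K} = \mathrm{Ker}(\pi_2^* f)$ into $\mathrm{Ker}(\mu^* f) = \mu^*\mathcal{K}$; since $\eta^{\mathcal{A}}$ is an isomorphism and source and target have equal rank, the restriction is an isomorphism. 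The analogous argument handles $\tau^{\mathcal{A}}$ on $\pi_1^*\mathcal{K}$, as well as $\eta^{\mathcal{B}}$ and $\tau^{\mathcal{B}}$ on $\mathcal{I}$; for the cokernel the same relations show that $\eta^{\mathcal{B}}$ and $\tau^{\mathcal{B}}$ preserve $\mu^*\mathcal{I}$ and hence descend. The cocycle axioms (a)--(c) of Definition \ref{def_modules_sp} and the quandle axiom $\Delta^*\eta + \Delta^*\tau = \mathrm{id}$ then transfer automatically, being inherited equalities of morphisms. In the transitive case, the remark preceding the proposition gives constant rank for free, so the above always applies; Whitney sums furnish biproducts, and the usual arguments show that monos are kernels and epis are cokernels, yielding the abelian category.

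For (3), coherent $\mathcal{O}_X$-modules already form an abelian category, and $\pi_1$, $\pi_2$, $\mu$ are all flat (the first two being projections and $\mu = \pi_2\circ(\pi_1,\mu)$ being the composition of $\pi_2$ with the isomorphism $(\pi_1,\mu)$), so the relevant pullback functors are exact on coherent sheaves. The arguments from (2) therefore carry over verbatim without any rank hypothesis, and kernel, image, and cokernel of any coherent rack/quandle module homomorphism are again coherent modules. The hard part, such as it is, will be purely bookkeeping: one must track the canonical identifications of iterated pullbacks (e.g.\ $p_{23}^*\pi_2^*\mathcal{A}\cong p_3^*\mathcal{A}$) when checking that the restricted or descended $\eta$ and $\tau$ still satisfy the cocycle identities, but no conceptual obstacle arises beyond this.
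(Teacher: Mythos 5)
Your proposal is correct and follows essentially the same route as the paper: the paper declares (1) and (2) straightforward and, for (3), uses exactly your argument — flatness of $\pi_1$, $\pi_2$ and $\mu$ plus the commutative squares $\eta^{\mathcal{B}}\circ\pi_2^*(f)=\mu^*(f)\circ\eta^{\mathcal{A}}$ to induce $\eta$ and $\tau$ on kernels, images and cokernels, with the cocycle axioms inherited. The only cosmetic improvement would be to justify that $\eta^{\mathcal{A}}$ restricts to an isomorphism $\pi_2^*\mathcal{K}\to\mu^*\mathcal{K}$ by applying the same containment argument to $(\eta^{\mathcal{A}})^{-1}$ rather than by an equal-rank count, since the ranks at $(x,y)$ are those of $\mathcal{K}_y$ and $\mathcal{K}_{x\rhd y}$ and their equality is most easily seen from that two-sided argument.
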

\begin{proof}
(1), (2) Straightforward. 

(3)
We can show that 
the sheaf-theoretic kernel, image and cokernel of a rack homomorphism are 
again rack modules in a natural way, 
using the fact that $\mu$ (as well as $\pi_1$ and $\pi_2$) is flat as remarked above. 

Take $\mathcal{K}=\mathrm{Ker} f$, for example. 
Then, from the exact sequence $0\to \mathcal{K}\to \mathcal{A}\overset{f}\to\mathcal{B}$, 
we obtain the following commutative diagram with exact rows. 
\[
\xymatrix{
0 \ar[r] & \pi_2^*\mathcal{K} \ar[r] &  
 \pi_2^*\mathcal{A} \ar[r]^{\pi_2^*f} \ar[d]^{\eta^\mathcal{A}}_\cong &  
 \pi_2^*\mathcal{B} \ar[d]^{\eta^\mathcal{B}}_\cong \\
0 \ar[r] & \mu^*\mathcal{K} \ar[r] & 
 \mu^*\mathcal{A} \ar[r]^{\mu^*f}  &  \mu^*\mathcal{B}  \\
}
\]
Thus there is a unique isomorphism 
$\eta^\mathcal{K}:  \pi_2^*\mathcal{K}\to  \mu^*\mathcal{K}$ 
making the whole diagram commutative. 
We can find $\tau^\mathcal{K}$ in a similar way, 
and the conditions for $\mathcal{K}$ to be a rack module 
follow from those for $\mathcal{A}$. 
\end{proof}

\begin{remark}
Unlike the case of group representations, 
we do not seem to have tensor products and duals. 
\end{remark}

We state the correspondence of modules and abelian group objects 
in the case of linear modules over quandle spaces. 

\begin{definition}
Let $Q$ be a smooth quandle, complex analytic quandle 
or a quandle variety, 
and let $\mathbb{K}$ denote the base field. 

(1)
The category of linear quandle modules over $Q$ 
is denoted by $\mathbf{Mod}_{\mathbb{K}}(Q)$. 

(2)
We define the category $\mathbf{Vec}(\mathbf{Quandle}/Q)$ of vector bundles in the category 
of quandle spaces (in each setting) over $Q$ as follows. 
\begin{itemize}
\item
An object is a pair $(\mathcal{A}, \rhd_{\mathcal{A}})$, 
where 
\begin{itemize}
\item
$\mathcal{A}$ is endowed with a structure of a vector bundle over $Q$, 
\item
$\rhd_{\mathcal{A}}$ is a binary operation on $\mathcal{A}$ 
which makes $\mathcal{A}$ a smooth quandle, complex analytic quandle or quandle variety, 
\end{itemize} 
such that the projection map $\mathcal{A}\to Q$, 
the zero section $Q\to\mathcal{A}$, 
the fiberwise addition $\mathcal{A}\times_Q\mathcal{A}\to\mathcal{A}$, 
the fiberwise inversion $\mathcal{A}\to \mathcal{A}$ 
and the scalar multiplications $\mathcal{A}\to\mathcal{A}$ are 
homomorphism of quandle spaces. 
\item
A morphism is a linear bundle map which is also a quandle homomorphism. 
\end{itemize}
\end{definition}

\begin{remark}
The definition of the latter category is not entirely category-theoretic. 

We can easily see that $\mathcal{A}$ as in (2) is an abelian group object 
in the category of quandle spaces over $Q$, 
and furthermore is a $\mathbb{K}$-vector space object. 
However, in order to obtain an equivalence as in the following proposition, 
there has to be some kind of continuity condition on the scalar 
(especially when we extend to the case 
where the base field is of positive characteristic). 

A category theoretic formulation using a ``field object'' is possible, 
but we will skip it for brevity. 
\end{remark}

\begin{proposition}\label{prop_quandle_space_from_module}
Let $Q$ be transitive smooth quandle, 
complex analytic quandle or quandle variety 
and let $\mathbb{K}$ denote the base field. 

Then there is an equivalence of the categories 
$\mathbf{Mod}_{\mathbb{K}}(Q)$ 
and $\mathbf{Vec}(\mathbf{Quandle}/Q)$. 

An analogous statement holds for racks. 
\end{proposition}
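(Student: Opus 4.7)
The plan is to construct functors in both directions, using the set-theoretic correspondence of Proposition~\ref{prop_quandle_from_module} and verifying that all the resulting operations and morphisms lie in the chosen category (smooth, complex analytic, or regular).

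In the forward direction, given a linear quandle module $(\mathcal{A}, \eta, \tau)$ over $Q$, I would define the quandle operation $\rhd_\mathcal{A}: \mathcal{A}\times \mathcal{A}\to \mathcal{A}$ by the ``affine'' formula $(a,b)\mapsto \tau(a)+\eta(b)$ lying over $\mu(\pi(a),\pi(b))$; this is a morphism in the appropriate category because $\eta$, $\tau$, and fiberwise addition are. The quandle axioms on $\mathcal{A}$ are exactly the translations of the module axioms (a)--(c) together with $\Delta^*\eta+\Delta^*\tau=\mathrm{id}$, as in Proposition~\ref{prop_quandle_from_module}. A direct inspection of the formula shows that $\pi$, $\zeta$, the fiberwise addition $\alpha$, the fiberwise inversion, and the scalar multiplications are all quandle homomorphisms, so $(\mathcal{A},\rhd_\mathcal{A})$ is an object of $\mathbf{Vec}(\mathbf{Quandle}/Q)$; functoriality on morphisms is immediate.

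For the backward direction, let $(\mathcal{A}, \rhd_\mathcal{A}) \in \mathbf{Vec}(\mathbf{Quandle}/Q)$ with zero section $\zeta: Q\to \mathcal{A}$. I would define
\[
\eta:\pi_2^*\mathcal{A}\to\mu^*\mathcal{A},\qquad \tau:\pi_1^*\mathcal{A}\to\mu^*\mathcal{A}
\]
as the morphisms over $Q\times Q$ corresponding, respectively, to $(x,b)\mapsto \zeta(x)\rhd_\mathcal{A} b$ and $(a,y)\mapsto a\rhd_\mathcal{A}\zeta(y)$; these live in the appropriate category because $\rhd_\mathcal{A}$ and $\zeta$ do. The key step, which I expect to be the main obstacle, is verifying that the fiberwise map $\phi_{xy}:\mathcal{A}_x\times\mathcal{A}_y\to\mathcal{A}_{x\rhd y}$ induced by $\rhd_\mathcal{A}$ decomposes as $\phi_{xy}(a,b)=\tau_{xy}(a)+\eta_{xy}(b)$, with $\tau_{xy}$ and $\eta_{xy}$ both $\mathbb{K}$-linear. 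This rests on the observation that the fiberwise addition $\alpha$ being a quandle homomorphism gives the bi-additivity identity
\[
\phi_{xy}(a_1+a_2, b_1+b_2)=\phi_{xy}(a_1,b_1)+\phi_{xy}(a_2,b_2),
\]
which, combined with $\phi_{xy}(0,0)=0$ (from $\zeta$ being a homomorphism), yields the additive decomposition; the scalar multiplications being homomorphisms then upgrades additivity to $\mathbb{K}$-linearity. Invertibility of $\eta_{xy}$ follows from the fact that $s_{\zeta(x)}$ is an automorphism of the quandle $\mathcal{A}$, so $\eta: \pi_2^*\mathcal{A}\to\mu^*\mathcal{A}$ is fiberwise an isomorphism between locally free sheaves of the same rank, hence an isomorphism. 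The module axioms (a)--(c) and the identity $\Delta^*\eta+\Delta^*\tau=\mathrm{id}$ then translate back from the quandle axioms on $\mathcal{A}$ via Proposition~\ref{prop_quandle_from_module}.

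Finally, the two constructions are visibly inverse on objects, and a morphism in either category corresponds canonically to one in the other under the decomposition $\phi=\tau+\eta$, yielding the asserted equivalence. The rack case is identical, except that the additional identity $\Delta^*\eta+\Delta^*\tau=\mathrm{id}$ coming from the quandle axiom $x\rhd x=x$ is dropped throughout.
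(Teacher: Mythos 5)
Your proposal is correct and follows essentially the same route as the paper: both directions rest on the discrete correspondence of Proposition~\ref{prop_quandle_from_module}, after which one checks that all the resulting maps are morphisms in the relevant geometric category. The only (welcome) variation is in the backward direction, where you re-derive the decomposition $\phi_{xy}=\tau_{xy}+\eta_{xy}$ directly from bi-additivity and define $\eta,\tau$ as composites with $\zeta$ and $\rhd_{\mathcal{A}}$, which makes their smoothness/regularity automatic where the paper simply asserts it.
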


\begin{proof}
In the algebraic case, 
it follows from the transitivity of $Q$ that 
a quandle module $\mathcal{A}$ over $Q$ is a locally free sheaf. 
Therefore, we can think of $\mathcal{A}$ as a vector bundle in any of the settings. 
Also note that homomorphisms are always of constant rank. 

For $x, y\in Q$, 
we write $\mathcal{A}_x$ for the fiber of $\mathcal{A}$ at $x$ 
and $\eta_{xy}$ for the map $\mathcal{A}_y\to\mathcal{A}_{x\rhd y}$ 
induced by $\eta$. 
We also write $(x, a)$ for an element $a\in\mathcal{A}_x$. 

Given an object $(\mathcal{A}, \eta, \tau)$ of $\mathbf{Mod}_{\mathbb{K}}(Q)$, 
the correspondence of Proposition \ref{prop_quandle_from_module} 
gives a quandle structure 
by $(x, a)\rhd_{\mathcal{A}} (y, b)=(x\rhd y, \eta_{xy}(b)+\tau_{xy}(a))$. 
This obviously gives a smooth, complex analytic or regular map 
$\mathcal{A}\times \mathcal{A}$. 
The map $(\rhd_{\mathcal{A}}, \pi_2): \mathcal{A}\times \mathcal{A}\to \mathcal{A}\times \mathcal{A}$ 
is an isomorphism, since its inverse is given by 
$((x, a), (y, b))\mapsto (x\rhd^{-1}y, \eta_{x, x\rhd^{-1}y}(b-\tau_{x, x\rhd^{-1}y}(a))$. 
Thus $(\mathcal{A}, \rhd_{\mathcal{A}})$ is a smooth quandle, etc. 

The projection, zero section, addition, and inversions are quandle homomorphisms 
by Proposition \ref{prop_quandle_from_module}, and the scalar multiplications 
are quandle homomorphisms from the above expression for $\rhd_{\mathcal{A}}$ 
and linearity of $\eta$ and $\tau$. 
They are smooth maps etc., so we have an object of 
$\mathbf{Vec}(\mathbf{Quandle}/Q)$. 

In both categories, a morphism is a (linear) bundle map. 
A bundle map $f$ is is a morphism in $\mathbf{Mod}_{\mathbb{K}}(Q)$ 
if and only if it is a homomorphism of modules over $Q$ as a discrete quandle. 
By Proposition \ref{prop_quandle_from_module}, 
this condition is equivalent to the condition that $f$ is a (discrete) quandle homomorphism 
compatible with the projection to $Q$, addition, etc., 
Thus we have a fully faithful functor 
$F: \mathbf{Mod}_{\mathbb{K}}(Q)\to\mathbf{Vec}(\mathbf{Quandle}/Q)$. 

Given an object $(\mathcal{A}, \rhd_{\mathcal{A}})$, 
by Proposition \ref{prop_quandle_from_module} 
we may write 
\[
(x, a)\rhd_{\mathcal{A}} (y, b)=(x\rhd y, \eta_{xy}(b)+\tau_{xy}(a))
\]
where $\eta_{xy}$ is an invertible additive map 
and $\tau_{xy}$ is an additive maps for $x, y\in Q$. 
From the smoothness etc. of $\rhd_{\mathcal{A}}$ 
it follows that $\eta_{xy}$ and $\tau_{xy}$ form bundle maps 
$\eta$ and $\tau$ on $Q\times Q$. 
They are $\mathbb{K}$-linear from the condition that 
scalar multiplications are homomorphisms for $\rhd_{\mathcal{A}}$. 
Thus we have a quandle module over $Q$. 
It is easy to see that this module is sent to $(\mathcal{A}, \rhd_{\mathcal{A}})$, 
and so the functor $F$ is an equivalence. 

\end{proof}

\begin{example}\label{ex_as_module}
Let $Q$ be a topological quandle, smooth quandle, complex analytic quandle or a quandle variety, 
$G$ a topological, Lie, complex Lie or algebraic group, 
$V$ a representation of $G$ 
and $\varphi: Q\to\mathrm{Conj}(G)$ a quandle homomorphism. 
(Here our convention for the conjugation quandle is 
such that $g\rhd h=ghg^{-1}$.) 
We take $\mathcal{A}=Q\times V$, 
$\eta_{xy}=\varphi(x)$ and $\tau_{xy}=1-\varphi(x\rhd y)$. 
Then $(\mathcal{A}, \eta, \tau)$ is a quandle module over $Q$. 
In the discrete setting, we may take $G$ to be 
the associated group $\mathrm{As}(Q)$. 
(See the paragraph before Def. 2.23 of [AG].)

In particular, if $G=\langle g\rangle$ is 
a cyclic group of infinite order, 
$f(x)=g$ for any $x\in Q$ 
and $V$ is a vector space (or an additive group) with a $G$-action, 
then we obtain a quandle module 
by $\eta_{xy}(a)=ga$ and $\tau_{xy}(a)=(1-g)a$ 
([AG, Example 2.20]). 

The quandle module structure above 
is the one given by the natural correspondences 
\begin{eqnarray*}
& & \{\hbox{representations of $G$}\} \\
& \leftrightarrow & 
\{\hbox{extensions of $G$ which are ``vector-spaces over $G$''}\} \\
& \to & 
\{\hbox{extensions of $Q$ which are ``vector-spaces over $Q$''}\} \\ 
& \leftrightarrow & 
\{\hbox{modules over $Q$}\}. 
\end{eqnarray*}
The first one is given by 
associating to a representation $V$ 
the group $G\times V$ with the multiplication
\[
(g, a)(h, b)=(gh, a+gb). 
\]
Then the conjugation is given by 
\[
(g, a)\rhd (h, b)=(g, a)(h, b)(g^{-1}, -g^{-1}a)
=(g\rhd h, gb+(1-g\rhd h)a). 
\]
The pullback of this quandle module over $G$ by $\varphi$ 
is exactly the quandle module $(\mathcal{A}, \eta, \tau)$ 
under the correspondence of Proposition \ref{prop_quandle_from_module}. 
\end{example}

\begin{example}
In the situation of the previous example, 
there is another natural module structure 
given by $\eta'_{xy}=\varphi(x)$ and $\tau'_{xy}=1-\varphi(x)$. 
In this case, the corresponding quandle structure on $\mathcal{A}$ 
is given by 
$(x, a)\rhd (y, b)=(x\rhd y, a+\varphi(x)(b-a))$. 

There is a homomorphism $f: (Q\times V, \eta', \tau') \to (Q\times V, \eta, \tau)$ 
given by $(x, a)\mapsto (x, (1-\varphi(x))(a))$, 
and the two modules are isomorphic 
if $1-\varphi(x)$ is invertible for any $x\in Q$. 
If not, 
the module structures are not necessarily isomorphic, 
as we will now see. 

For $\alpha\not=0, 1$, 
let 
$Q=\{X\in\mathrm{GL}(2, \mathbb{C})\mid \det X=\alpha, \mathrm{Tr}(X)=1+\alpha\}$, 
i.e. the conjugacy class of $X_0:=\begin{pmatrix}1 & 0 \\ 0 & \alpha \end{pmatrix}$. 
It is a regular $s$-manifold by the operation $X\rhd Y=XYX^{-1}$. 
(We remark that $Q$ is isomorphic to 
$\{X\in\mathrm{SL}(2, \mathbb{C})\mid \mathrm{Tr}(X)=\sqrt{\alpha}+1/\sqrt{\alpha}\}$ 
with the conjugation operation.) 
The standard representation of 
$\mathrm{SL}(2, \mathbb{C})$ gives rise to two quandle module structures 
$(\eta, \tau)$ and $(\eta', \tau')$ 
on $\mathcal{A}:=Q\times \mathbb{C}^2$. 
We write the associated quandle operations on $\mathcal{A}$ by 
$\rhd$ and $\rhd'$. 
It is easy to show that $(\mathcal{A}, \rhd)$ can be identified with the 
inverse image of $Q$ by 
$\mathrm{Conj}(\mathrm{Aff}(2, \mathbb{C}))\to \mathrm{Conj}(\mathrm{GL}(2, \mathbb{C}))$. 

We show that the two module structures are not isomorphic. 
First note that there is a section $Q\to \mathcal{A}$, 
given by $s(X)=(\alpha I-X)\begin{pmatrix} 1 \\ 0\end{pmatrix}$, 
which satisfies $(X, s(X))\rhd' (Y, 0)=(X\rhd Y, 0)$ for any $X$ and $Y$. 
If there were an isomorphism $(\mathcal{A}, \rhd')\overset{\sim}{\to} (\mathcal{A}, \rhd)$, 
given by $f_X\in \mathrm{GL}(2, \mathbb{C})$ 
for each $X\in Q$, 
then $t(X):=f_X(s(X))$ would satisfy 
\[
(X, t(X))\rhd (Y, 0)=(X\rhd Y, 0), 
\hbox{ i.e. } (I-XYX^{-1})t(X)=0, 
\]
for any $X, Y\in Q$. 
Since $s(X_0)\not=0$, we would have $a:=t(X_0)\not=0$. 
On the other hand, applying the above condition with $X=Y=X_0$, 
we see that $X_0a=a$. 
Using this, the above condition with $X=X_0$ implies $Ya=a$ for any $Y\in Q$, 
but this is a contradiction since 
the eigenspace of $Y$ for the eigenvalue $1$ 
can be any $1$-dimensional subspace of $\mathbb{C}^2$ 
as $Y$ moves. 
\end{example}

\begin{example}
Here we will give two examples of quandle modules over a quandle variety 
which are not locally free. 

Let $k$ be a field and 
let $Q=\{X\in\mathrm{SL}(2, k)\mid \mathrm{Tr}(Q)=2\}$ 
be the quandle variety given by the operation $X\rhd Y=XYX^{-1}$. 
We endow $\mathcal{A}:=Q\times k^2$ with the quandle module structure 
associated to the standard representation of $\mathrm{SL}(2, k)$ 
as in Example \ref{ex_as_module}. 
Denote by $\mathfrak{m}$ the maximal ideal of the affine coordinate ring of $Q$ 
corresponding to the identity matrix $I$. 
Then, for each positive integer $n$, $\mathfrak{m}^n\mathcal{A}$ 
is a quandle submodule of $\mathcal{A}$. 
By Proposition \ref{prop_module_category} (3), 
we have quandle modules $\mathcal{A}/\mathfrak{m}^n\mathcal{A}$ 
supported on $\{I\}$. 

For example, $\mathcal{A}/\mathfrak{m}\mathcal{A}$ 
is a skyscraper sheaf at $I$ with fiber $k^2$. 
The map $\eta_{XY}$ is $X$ if $Y=I$ and $0$ otherwise, 
and $\tau_{XY}$ is always $0$. 
\end{example}

\section{Representations and extensions of Lie-Yamaguti algebras 
and infinitesimal $s$-manifolds}

The following notion of a representation 
of a Lie-Yamaguti algebra was defined 
in \cite[\S6]{Yamaguti1969}. 

\begin{definition}\label{def_rly}
A \emph{representation of a Lie-Yamaguti algebra} $T$ is 
a quadruplet $(V, \rho, \delta, \theta)$, 
where $V$ is a vector space, $\rho: T\to\mathrm{End}(V)$ 
is a linear map and $D, \theta: T\times T\to\mathrm{End}(V)$ 
are bilinear maps, 
such that the following hold for any $x, y, z, w\in T$. 
\begin{enumerate}
\item[(RLY1)]
$\delta(x, y)+\theta(x, y)-\theta(y, x)=[\rho(x), \rho(y)]-\rho(x*y)$. 
\item[(RLY2)]
$\theta(x, y*z)-\rho(y)\theta(x, z)+\rho(z)\theta(x, y)=0$. 
\item[(RLY3)]
$\theta(x*y, z)-\theta(x, z)\rho(y)+\theta(y, z)\rho(x)=0$. 
\item[(RLY4)]
$\theta(z, w)\theta(x, y)-\theta(y, w)\theta(x, z)-\theta(x, [y, z, w])
+ \delta(y, z)\theta(x, w)=0$. 
\item[(RLY5)]
$[\delta(x, y), \rho(z)]=\rho([x, y, z])$, 
\item[(RLY6)]
$[\delta(x, y), \theta(z, w)] = \theta([x, y, z], w)+\theta(z, [x, y, w])$. 
\end{enumerate}
A \emph{homomorphism} of representations of Lie-Yamaguti algebras 
is a linear map compatible with $\rho, \delta$ and $\theta$. 
\end{definition}

\begin{remark}
(1)
As pointed out in \cite{Yamaguti1969}, 
we may do without $\delta$ if we take (RLY1) as the definition of $\delta$, 
and that the following holds: 
\begin{enumerate}
\item[(RLY7)]
$\delta(x*y, z)+\delta(y*z, x)+\delta(z*x, y) = 0$. 
\end{enumerate}

(2)
We also have 
\begin{enumerate}
\item[(RLY8)]
$[\delta(x, y), \delta(z, w)] = \delta([x, y, z], w)+\delta(z, [x, y, w])$. 
\end{enumerate}
To show this, we use (RLY1) to write $\delta(z, w)$ in terms of $\theta$ and $\rho$, 
use (RLY6), 
rewrite $[\delta(x, y), [\rho(z), \rho(w)]]$ by Jacobi identity in $\mathrm{End}(V)$ 
and then apply (RLY5) and (LY5). 
\end{remark}

\begin{definition}\label{def_rism}
A \emph{representation of an infinitesimal $s$-manifold} $(T, \sigma)$ 
is a data $(V, \rho, \delta, \theta, \psi)$ 
where $(V, \rho, \delta, \theta)$ is a representation  of $T$ 
and $\psi\in \mathrm{End}(V)$ is an invertible linear transformation 
satisfying the following for any $x, y\in T$. 
\begin{enumerate}
\item[(RISM1)]
$\rho(\sigma(x))=\psi\circ\rho(x)\circ\psi^{-1}$. 
\item[(RISM2)]
$\theta(x, \sigma(y)) = \psi\circ \theta(x, y)$, \ 
$\theta(\sigma(x), y) = \theta(x, y)\circ\psi^{-1}$. 
\item[(RISM3)]
$\delta(x, y) = \psi\circ \delta(x, y)\circ\psi^{-1}$. 
\end{enumerate}

A representation $(V, \psi)$ is called \emph{regular} if $V$ is finite dimensional and 
$1-\psi$ is invertible. 

A \emph{homomorphism} of representations of an infinitesimal $s$-manifold 
is defined as a homomorphism of representations of Lie-Yamaguti algebras 
commuting with $\psi$. 
\end{definition}

\begin{remark}
We may replace one of the equalities in (RISM2) by 
\begin{enumerate}
\item[(RISM4)]
$\theta(\sigma(x), \sigma(y)) = \psi\circ\theta(x, y)\circ\psi^{-1}$. 
\end{enumerate}
From (RLY1), (RISM1), (RISM4) and (ISM1) we also have 
\begin{enumerate}
\item[(RISM5)]
$\delta(\sigma(x), \sigma(y))=\psi\circ \delta(x, y)\circ\psi^{-1}$. 
\end{enumerate}
\end{remark}

Yamaguti's definition of a representation 
was based on Eilenberg's principle of correspondence between 
representations of algebras 
(defined by binary operations and equational relations) 
and their extensions. 
He proved, following Chevalley-Eilenberg, 
a correspondence between extensions of Lie-Yamaguti algebras by abelian ideals 
and pairs of a representation and an element of a certain cohomology group 
(\cite[\S7]{Yamaguti1969}). 

Here, we give the correspondence 
in the case of split extensions, in a form suited for our purpose, 
as well as a proof for the reader's convenience.

By a Lie-Yamaguti algebra over $T$, 
we will mean a pair of a Lie-Yamaguti algebra $T^*$ 
and a homomorphism $T^*\to T$. 
Note that, if $T_1$ and $T_2$ are Lie-Yamaguti algebras over $T$, 
so is $T_1\times_T T_2$ in a natural way. 

\begin{proposition}\label{prop_ly_ext}
(1)
For a Lie-Yamaguti algebra $T$ and a vector space $V$, 
the following data are equivalent. 
Here, the map $T\oplus V\to T$ will always be the standard projection. 
In (iii), 
$V$ is regarded as a Lie-Yamaguti algebra with trivial products.

\begin{enumerate}
\item[(i)]
A Lie-Yamaguti algebra structure on $T\oplus V$ over $T$
for which $T\subseteq T\oplus V$ is a subalgebra 
and $V$ is an abelian ideal. 
\item[(i)']
A Lie-Yamaguti algebra structure on $T\oplus V$ over $T$
of the form 
\begin{eqnarray*}
(x_1, v_1)*(x_2, v_2) & = & (x_1*x_2, A(x_1)v_2+B(x_2)v_1), \\
{[}(x_1, v_1), (x_2, v_2), (x_3, v_3)] & = & 
([x_1, x_2, x_3], \\
& & \qquad E(x_2, x_3)v_1+F(x_3, x_1)v_2 + G(x_1, x_2)v_3), 
\end{eqnarray*}
where $A, B: T\to\mathrm{End}(V)$ are linear 
and $E, F, G: T\times T\to\mathrm{End}(V)$ are bilinear. 

We may also require that $B=-A, F(y, x)=-E(x, y)$ and $G(x, x)=0$. 
\item[(ii)]
A Lie-Yamaguti algebra structure on $T\oplus V$ over $T$ satisfying the following. 
\begin{enumerate}
\item[(a)]
$\alpha=(id_T, +): T\oplus V\oplus V\to T\oplus V$ is a homomorphism, 
where $T\oplus V\oplus V$ is identified with $(T\oplus V)\times_T (T\oplus V)$. 
\item[(b)]
For any scalar $\lambda$, 
$\mu_\lambda=(id_T, \lambda): T\oplus V\to T\oplus V; (x, v)\mapsto(x, \lambda v)$ 
is a homomorphism. 
\end{enumerate}

\item[(ii)']
A Lie-Yamaguti algebra structure on $T\oplus V$ over $T$ satisfying (ii)(a). 

\item[(iii)]
An isomorphism class of the following data: 
An extension $0\to V\to \tilde{T}\overset{\pi}{\to} T\to 0$ of Lie-Yamaguti algebras,  
Lie-Yamaguti algebra homomorphisms 
$\alpha: \tilde{T}\times_T\tilde{T}\to \tilde{T}$, $\zeta: T\to \tilde{T}$ over $T$ 
and $\iota: \tilde{T}\to\tilde{T}$ 
such that $(\tilde{T}, \pi, \zeta, \alpha, \iota)$ forms an abelian group object over $T$. 
\item[(iv)]
A representation of $T$ on $V$. 
\end{enumerate}
The correspondence of (i)' and (iv) is given by 
$\rho=A$, $\delta=G$ and $\theta=E$. 

(2)
Let $(T, \sigma)$ be an infinitesimal $s$-manifold, 
$(V, \rho, \delta, \theta)$ a representaion of $T$ and $\psi\in\mathrm{GL}(V)$. 
Then $(T\oplus V, \sigma\times \psi)$ is an infinitesimal $s$-manifold 
if and only if 
$(V, \psi)$ is a regular representation of $(T, \sigma)$, 
and if so, the projection map $T\oplus V\to T$, the zero section $T\to T\oplus V$, 
$\alpha$ and $\mu_\lambda$ in (1)(ii) are homomorphisms 
of infinitesimal $s$-manifolds. 
\end{proposition}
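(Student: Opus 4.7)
The plan is to prove (1) through a cycle (i)$\Leftrightarrow$(i)'$\Leftrightarrow$(iv) and (i)'$\Leftrightarrow$(ii)'$\Leftrightarrow$(ii)$\Leftrightarrow$(iii), with the content really residing in (i)'$\Leftrightarrow$(iv). Part (2) is then a direct verification that the (ISM) axioms for $\sigma\times\psi$ on $T\oplus V$ decompose componentwise into the (ISM) axioms on $(T,\sigma)$ (already holding) and the (RISM) axioms on $V$ (defining a regular representation).

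For (i)$\Leftrightarrow$(i)': if $V$ is an abelian ideal, then $V*V=0$ and $[T,V,V]=[V,T,V]=[V,V,T]=0$, so multilinear expansion of $(x_1,v_1)*(x_2,v_2)$ and $[(x_1,v_1),(x_2,v_2),(x_3,v_3)]$ leaves only the terms displayed in (i)', defining linear $A,B\colon T\to\mathrm{End}(V)$ and bilinear $E,F,G\colon T\times T\to\mathrm{End}(V)$. The refinements $B=-A$, $F(y,x)=-E(x,y)$, $G(x,x)=0$ follow by applying (LY1) and (LY2) to diagonal inputs and extracting the $V$-components, using characteristic zero. The converse is immediate from the explicit form.

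The main step is (i)'$\Leftrightarrow$(iv): set $\rho:=A$, $\theta:=E$, $\delta:=G$ and substitute the products of (i)' into each of (LY1)--(LY6). The $T$-components recover the axioms on $T$; separating the $V$-components by coefficients of $v_1,v_2,v_3$ produces the (RLY) axioms. The rough correspondence is (LY3) $\Rightarrow$ (RLY1); (LY4) $\Rightarrow$ (RLY3) and (RLY7); (LY5) $\Rightarrow$ (RLY2) and (RLY5); (LY6) $\Rightarrow$ (RLY4) and (RLY6). This matching is mechanical but intricate, and constitutes the main obstacle of the proof: one must carefully track signs, argument orderings, and which (LY) identity feeds which (RLY) relation. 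The calculation is essentially the content of \cite[\S 6]{Yamaguti1969}. Granted (i)'$\Leftrightarrow$(iv), the remaining equivalences follow formally: (i)'$\Leftrightarrow$(ii)' holds because $\alpha$ being an LY-homomorphism forces the $V$-parts of $*$ and $[\ ]$ to be biadditive (hence $k$-multilinear in characteristic zero) in the $V$-variables with no constant term, which is exactly the form (i)'; (ii)$\Leftrightarrow$(ii)' since the $\mu_\lambda$-homomorphism property is then automatic from $k$-multilinearity; and (ii)$\Leftrightarrow$(iii) is the categorical fact that, given the underlying vector-space identification $\tilde{T}\cong T\oplus V$, an abelian group object over $T$ is uniquely specified by $\alpha$, with $\zeta$ the zero section and $\iota$ the fiberwise negation automatically being LY homomorphisms.

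For (2), I substitute the forms in (i)' and the endomorphism $\sigma\times\psi$ into (ISM0)--(ISM3). (ISM0) reduces to the invertibility of $\psi$ and of $\mathrm{id}_V-\psi$, i.e., regularity of $(V,\psi)$; (ISM1) applied to $*$, separated into coefficients of the two $V$-inputs, yields (RISM1); (ISM2) and (ISM3) applied to $[\ ]$ similarly yield (RISM2) and (RISM3), with (RISM4) and (RISM5) following by combining these with (RLY1) and (ISM1). The homomorphism assertions for the projection, zero section, $\alpha$ and $\mu_\lambda$ with respect to $\sigma\times\psi$ are then immediate, since $\sigma\times\psi$ acts as $\psi$ on every $V$-factor and $\psi$ commutes tautologically with addition and scalar multiplication on $V$.
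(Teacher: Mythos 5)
Your proposal follows essentially the same route as the paper: expand the operations of $T\oplus V$ componentwise, use the abelian-ideal (resp.\ the $\alpha$-homomorphism) hypotheses to kill the cross terms and reduce to the normal form (i)', match the (LY) axioms for $T\oplus V$ against the (RLY) axioms in exactly the pairing you list, recover (iii) by showing $\zeta$ and $\iota$ are forced by the group axioms, and verify (2) by the same componentwise comparison of (ISM) with (RISM). One slip worth correcting: additivity in the $V$-variables does \emph{not} imply $k$-linearity over $\mathbb{R}$ or $\mathbb{C}$ even in characteristic zero; the correct (and immediate) reason the component maps $A,B,E,F,G$ are $k$-multilinear is that the operations of any Lie--Yamaguti algebra structure on $T\oplus V$ are $k$-multilinear by definition, so the $\alpha$-homomorphism condition in (ii)' is needed only to annihilate the homogeneous pieces of degree $\neq 1$ in the $v_i$, which is how the paper argues (e.g.\ setting $v_i=w_i=0$ to get $2D_0=D_0$).
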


\begin{proof}
(1)
We first show that (i)-(ii)' are equivalent. 
From any of these, using the condition that the projection $T\oplus V\to T$ 
preserves the product $*$, 
it follows that we can write 
\[
(x_1, v_1)*(x_2, v_2) =  
(x_1*x_2, A(x_1)v_2+B(x_2)v_1+C_0(x_1, x_2)+C_2(v_1, v_2)), 
\]
where $A$ and $B$ are linear with values in $\mathrm{End}(V)$ 
and $C_0$ and $C_2$ are bilinear with values in $V$. 
Similarly, the first component of 
$[(x_1, v_1), (x_2, v_2), (x_3, v_3)]$ 
is $[x_1, x_2, x_3]$ and the second component can be written as
\begin{multline}\label{formula_tp}
E(x_2, x_3)v_1+F(x_3, x_1)v_2 + G(x_1, x_2)v_3 \\
+ D_0(x_1, x_2, x_3) 
+ D_2^{(1)}(x_1, v_2, v_3) + D_2^{(2)}(v_1, x_2, v_3) + D_2^{(3)}(v_1, v_2, x_3)
+ D_3(v_1, v_2, v_3), 
\end{multline}
where $E, F, G: T\times T\to \mathrm{End}(V)$ are bilinear 
and $D_1$, $D_2^{(j)}$ and $D_3$ are trilinear with values in $V$. 

Let us assume (i). Then $C_0=0$ since $T$ is a closed under $*$  
and $C_2=0$ since $*$ is zero on $V$. 
Similarly, the second component of the triple product 
is also linear in $v_1, v_2, v_3$. 
In fact, from the assumption that $T$ is closed under $[\ ]$, 
it follows that $D_0=0$; 
from $[V, V, V]=0$ we have $D_3=0$; 
from $[T, V, V]=0$ we have $D_2^{(1)}=0$, and so on. 
Thus the operations are as in (i)'. 
From (LY1) for $T\oplus V$, $B=-A$ follows. 
From (LY2) for $T\oplus V$, 
$F(y, x)=-E(x, y)$ and $G(x, x)=0$ follow. 

Conversely, assuming (i)', 
it is easy to see that $T$ is a subalgebra and $V$ is an abelian ideal. 
(For the latter, note that $V$ is an ideal since it is the kernel of $T\oplus V\to T$.) 
Thus (i) and (i)' are equivalent. 

From (i)', it is straightforward to show that (ii) and (ii)' hold. 
For example, we have 
\begin{eqnarray*}
\mu_\lambda(x_1, v_1)*\mu_\lambda(x_2, v_2)
& = &  (x_1, \lambda v_1)*(x_2, \lambda v_2) \\
& = & (x_1*x_2, A(x_1)(\lambda v_2)+B(x_2)(\lambda v_1)) \\
& = & \mu_\lambda ((x_1, v_1)*(x_2, v_2)). 
\end{eqnarray*}

Let us assume (ii)' and show that 
the second components of the products $(x_1, v_1)*(x_2, v_2)$ 
and $[(x_1, v_1), (x_2, v_2), (x_3, v_3)]$ are again linear in $v_i$. 
To show the latter, say, 
we look at the second components of the both sides of the relation 
\[
\alpha([(x_1, v_1, w_1), (x_2, v_2, w_2), (x_3, v_3, w_3)]) 
= [(x_1, v_1+w_1), (x_2, v_2+w_2), (x_3, v_3+w_3)]. 
\]
For the left hand side, it is the sum of the second components of 
the two products 
$[(x_1, v_1), (x_2, v_2), (x_3, v_3)]$ and $[(x_1, w_1), (x_2, w_2), (x_3, w_3)]$. 
By setting $v_i=w_i=0$, we obtain $2D_0(x_1, x_2, x_3)=D_0(x_1, x_2, x_3)$, 
i.e. $D_0\equiv 0$. 
By setting $x_i=0$ and $w_1=v_1, v_2=v_3=0$, we have $D_3\equiv 0$. 
Then $x_2=x_3=0$, $w_2=v_2, w_3=0$ gives $D_2^{(1)}\equiv 0$, 
and so on.  
Thus (i)' holds, 
and we have shown that (i)-(ii)' are equivalent.

Now let us show that (i)-(ii)' are equivalent to (iii). 
Given a Lie-Yamaguti algebra structure on $T\oplus V$ satisfying (ii), 
let $\tilde{T}:=T\oplus V$, $\pi$ the projection, 
$\alpha$ the morphism (ii)(a), $\zeta: T\to T\oplus V; x\mapsto (x, 0)$ 
and $\iota: T\oplus T\to T\oplus V; (x, v)\mapsto (x, -v)$. 
Using the linearity of the operations in $v$, 
it is easy to see that these are Lie-Yamaguti homomorphisms and 
that $(\tilde{T}, \pi, \zeta, \alpha, \iota)$ satisfies the conditions of (iii). 

Conversely, let an extension $\tilde{T}$ and $\pi, \zeta, \alpha, \iota$ as in (iii) be given. 
The section $\zeta$ gives a natural isomorphism 
$f: T\oplus V\to \tilde{T}; (x, v)\mapsto \zeta(x)+v$ as vector spaces. 
Applying $\alpha\circ (\zeta\times_T id_{\tilde{T}})=id_{\tilde{T}}$ 
to $(0, v)\in T\times_T\tilde{T}$, $v\in V$, 
we have $\alpha(0, v)=v$ for $v\in V$. 
By symmetry we have $\alpha(v, 0)=v$. 
Applying the same equality to $(x, \zeta(x))\in T\times_T\tilde{T}$, 
we have $\alpha(\zeta(x), \zeta(x))=\zeta(x)$. 
From the linearity of $\alpha$, we have 
\begin{eqnarray*}
\alpha(f(x, v), f(x, w)) & = & \alpha(\zeta(x)+v, \zeta(x)+w) \\
& = & \alpha(\zeta(x), \zeta(x)) + \alpha(v, 0)+\alpha(0, w) \\
& = & \zeta(x)+v+w = f(x, v+w), 
\end{eqnarray*}
i.e. $\alpha$ in (iii) pulls back to $\alpha$ in (ii)(a). 
By taking the pullback of the Lie-Yamaguti algebra structure, 
we are in the situation of (ii)'. 
Obviously, the result only depends on the isomorphism class of 
the extension, $\zeta$ and $\alpha$. 

Let us show that these correspondences are inverse to each other. 
It is obvious that if we start from (ii) we have $f=id_{T\oplus V}$ and 
obtain the original data. 
Conversely, if we start from (iii), the isomorphism $T\oplus V\cong \tilde{T}$ is 
constructed so that the extension $0\to V\to \tilde{T}\to T\to 0$ 
corresponds to the standard extension $0\to V\to T\oplus V\to T\to 0$ and 
$\alpha$ in (iii) corresponds to $\alpha$ in (ii). 
Thus it suffices to show that $\zeta$ and $\iota$ in (iii) are uniquely determined, 
where $\tilde{T}=T\oplus V$ is constructed from data as in (ii). 

In this case $\zeta: T\to T\oplus V$ is given as $x\mapsto (x, g(x))$. 
Then the group axiom 
$\alpha\circ (\zeta\times_T id_{\tilde{T}})=id_{\tilde{T}}$ 
applied to $(x, \zeta(x))$ gives 
$(x, g(x)+g(x))=(x, g(x))$, i.e. $g\equiv 0$. 
Similarly, 
$\alpha\circ (\iota, id_{\tilde{T}})=id_{\tilde{T}}$ 
applied to $((x,v), \iota(x, v))$ gives $\iota(x, v)=(x, -v)$. 

Equivalence with (iv): 
For the operations in the form of (i)', 
let us show that the following conditions are equivalent: 
\begin{itemize}
\item[(a)]
For $A, B, E, F$ and $G$, 
the associated operations make $T\oplus V$ a Lie-Yamaguti algebra 
(over $T$, satisfying (i)--(ii)'). 
\item[(b)]
$(V, A, G, E)$ is a representation of $T$, 
and $B$ and $F$ are given by 
$B=-A$ and $F(y, x)=-E(x, y)$. 
\end{itemize}

In fact, the condition $(x, a)*(x, a)=0$, 
which is the skew-symmetry condition (LY1) for $T\oplus V$, is 
readily seen to be equivalent to $B=-A$. 
The condition (LY2) for $T\oplus V$, $[(x, a), (x, a), (y, b)]=0$, 
is equivalent to $F(y, x)=-E(x, y)$ and $G(x, x)=0$. 
Note that (b) implies $G(x, x)=0$ by virtue of (RLY1). 

Thus, assuming either (a) or (b), 
we have 
\[
(x, a)*(y, b)= (x*y, A(x)b-A(y)a)
\]
and 
\[
[(x, a), (y, b), (z, c)] = ([x, y, z], E(y, z)a-E(x, z)b+G(x, y)c), 
\]
with $G$ skew-symmetric. 
We will therefore assume these in the rest of the proof. 

By 
\[
((x, a)*(y, b))*(z, c) = ((x*y)*z, A(x*y)c+A(z)A(y)a-A(z)A(x)b), 
\]
the second component of the cyclic sum of 
$[(x, a), (y, b), (z, c)]+((x, a)*(y, b))*(z, c)$ is 
\begin{eqnarray*}
& & \left(G(y, z)+E(y, z)-E(z, y) -[A(y), A(z)]+ A(y*z)\right) a \\
& + & \left(G(z, x)+E(z, x)-E(x, z) -[A(z), A(x)]+ A(z*x)\right) b \\
& + & \left(G(x, y)+E(x, y)-E(y, x) -[A(x), A(y)]+ A(x*y)\right) c. 
\end{eqnarray*}
This is identically $0$ if and only if 
$G(x, y)+E(x, y)-E(y, x)=[A(x), A(y)]-A(x*y)$ holds for any $x$ and $y$. 
Thus we have shown that 
(RLY1) for $(V, A, G, E)$ 
is equivalent to (LY3) for $T\oplus V$. 

The second component of $[(x, a)*(y, b), (z, c), (w, d)]$ is 
\[
-E(z, w)A(y)a+E(z, w)A(x)b-E(x*y, w)c+G(x*y, z)d, 
\]
and if we take the cyclic sum in the first $3$ arguments, 
the coefficient of $a$ is 
\[
-E(y*z, w)-E(z, w)A(y)+E(y, w)A(z), 
\]
those of $b$ and $c$ are cyclic permutation of this, 
and that of $d$ is 
\[
G(x*y, z)+G(y*z, x)+G(z*x, y). 
\]
Thus (LY4) holds for $T\oplus V$ if and only if 
(RLY3) and (RLY7) for $(V, A, G, E)$ holds. 

The second components of $[(x, a), (y, b), (z, c)*(w, d)]$, 
$[(x, a), (y, b), (z, c)]*(w, d)$ and $(z, c)*[(x, a), (y, b), (w, d)]$ are 
\begin{eqnarray*}
& & E(y, z*w)a-E(x, z*w)b+G(x, y)(A(z)d-A(w)c), \\
& & A([x, y, z])d-A(w)(E(y, z)a-E(x, z)b + G(x, y)c), \\
& & A(z)(E(y, w)a-E(x, w)b+G(x, y)d)-A([x, y, w])c. 
\end{eqnarray*}
Thus (LY5) for $T\oplus V$ holds if and only if 
\begin{eqnarray*}
E(y, z*w) & = & -A(w)E(y, z) + A(z)E(y, w), \\
 -E(x, z*w) & = & A(w)E(x, z) - A(z)E(x, w),  \\
-G(x, y)A(w)& = & -A(w)G(x, y) - A([x, y, w]) \hbox{ and } \\
G(x, y)A(z) & = & A([x, y, z]) + A(z)G(x, y) 
\end{eqnarray*}
hold. 
The first two equalities correspond to (RLY2), 
and the other two  to (RLY5). 

Finally, we calculate 
\begin{eqnarray*}
[(x, a), (y, b), [(z, c), (v, d), (w, e)]] 
 & - & [[(x, a), (y, b), (z, c)], (v, d), (w, e)] \\
 & - & [(z, c), [(x, a), (y, b), (v, d)], (w, e)]] \\
 & - & [(z, c), (v, d), [(x, a), (y, b), (w, e)]]. 
\end{eqnarray*}
Then we see that (LY6) for $T\oplus V$ is equivalent to the following: 
\begin{eqnarray*}
E(y, [z, v, w]) - E(v, w)E(y, z) + E(z, w)E(y, v) - G(z, v)E(y, w) & = & 0, \\
-E(x, [z, v, w]) + E(v, w)E(x, z) - E(z, w)E(x, v) + G(z, v)E(x, w) & = & 0, \\
G(x, y)E(v, w) - E(v, w)G(x, y) - E([x, y, v], w) - E(v, [x, y, w]) & = & 0, \\
-G(x, y)E(z, w) + E([x, y, z], w) + E(z, w)G(x, y) + E(z, [x, y, w]) & = & 0, \\
G(x, y)G(z, v) - G([x, y, z], v) - G(z, [x, y, v]) - G(z, v)G(x, y) & = & 0. 
\end{eqnarray*}
The first two equalities are equivalent to (RLY4), 
the third and forth to (RLY6), 
and the last to (RLY8).

(2)
Let $(x, a), (y, b)$ and $(z, c)$ be elements of $T\oplus V$. 
Write $\tilde{\sigma}$ for $\sigma\times \psi$. 
We have
\[
\tilde{\sigma}((x, a)*(y, b))
=
(\sigma(x*y), \psi(\rho(x)b)-\psi(\rho(y)a)), 
\]
while 
\[
\tilde{\sigma}(x, a)*\tilde{\sigma}(y, b)
=
(\sigma(x)*\sigma(y), \rho(\sigma(x))\psi(b)-\rho(\sigma(y))\psi(a)), 
\]
hence (ISM1) for $T\oplus V$ holds if and only if 
(RISM1) holds. 

We have 
\[
\tilde{\sigma}([(x, a), (y, b), (z, c)]) 
=(\sigma([x, y, z]), 
\psi(\theta(y, z)a)-\psi(\theta(x, z)b)+\psi(\delta(x, y)c))
\]
and 
\begin{eqnarray*}
& & [\tilde{\sigma}(x, a), \tilde{\sigma}(y, b), \tilde{\sigma}(z, c)] \\
& = &
([\sigma(x), \sigma(y), \sigma(z)], 
\theta(\sigma(y), \sigma(z))\psi(a)-\theta(\sigma(x), \sigma(z))\psi(b)
+\delta(\sigma(x), \sigma(y))\psi(c)), 
\end{eqnarray*}
hence (ISM2) for $T\oplus V$ holds if and only if 
(RISM4) and (RISM5) hold. 

Finally, we have 
\[
[(x, a), (y, b), \tilde{\sigma}(z, c)] 
= 
([x, y, \sigma(z)], 
\theta(y, \sigma(z))a-\theta(x, \sigma(z))b
+\delta(x, y)\psi(c)), 
\]
hence (ISM3) for $T\oplus V$ is equivalent to 
(RISM3) and the first equality of (RISM2) for $(V, \psi)$. 
Note that the first equality of (RISM2) and (RISM4) imply (RISM2). 
\end{proof}

\begin{proposition}\label{prop_hom_ly_ext}
Let $(T, \sigma)$ be an infinitesimal $s$-manifold 
and 
$(V_1, \psi_1)$ and $(V_2, \psi_2)$ its regular representations. 

(1)
A linear map $f: V_1\to V_2$ is a homomorphism of representations 
if and only if $id_T\times f: T\oplus V_1\to T\oplus V_2$ 
is a homomorphism of infinitesimal $s$-manifolds. 

(2)
If $\tilde{f}: T\oplus V_1\to T\oplus V_2$ is a homomorphism of infinitesimal $s$-manifolds over $T$ 
compatible with $\alpha$, 
then it is given as $\tilde{f}=id_T\times f$ for some $f$. 
\end{proposition}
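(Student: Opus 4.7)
For part (1), the plan is a direct termwise comparison. Using Proposition \ref{prop_ly_ext}(1)(i)' (with $A=\rho$, $B=-\rho$, $G=\delta$, $E=\theta$, $F(y,x)=-\theta(x,y)$), the operations on $T\oplus V_i$ are explicit, and $\sigma\times\psi_i$ is the automorphism. Apply $\mathrm{id}_T\times f$ to both sides of the compatibility identities for $*$, $[\ ]$, and $\sigma\times\psi$. The $T$-components match trivially, so everything reduces to identities in $V_2$. Collecting coefficients of the free vector variables $v_1,v_2,v_3\in V_1$ (using that the identities must hold for all $v_i$ independently) yields, term by term, the conditions
\[
f\circ\rho_1(x)=\rho_2(x)\circ f,\quad f\circ\theta_1(x,y)=\theta_2(x,y)\circ f,
\]
\[
f\circ\delta_1(x,y)=\delta_2(x,y)\circ f,\quad f\circ\psi_1=\psi_2\circ f,
\]
which are exactly the requirements for $f$ to be a homomorphism of representations of $(T,\sigma)$ (Definitions \ref{def_rly} and \ref{def_rism}).

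For part (2), the plan is structural. Since $\tilde f$ is a homomorphism of infinitesimal $s$-manifolds, in particular a homomorphism of Lie-Yamaguti algebras, it is $\mathbb{K}$-linear on $T\oplus V_1$. The hypothesis ``over $T$'' means $\tilde f$ commutes with the projections to $T$, so its first component is the identity in $x$. Hence there exist unique linear maps $g\colon T\to V_2$ and $f\colon V_1\to V_2$ with
\[
\tilde f(x,v)=(x,\,g(x)+f(v)).
\]
Now invoke compatibility with $\alpha$. Computing both sides of $\tilde f\circ\alpha=\alpha\circ(\tilde f\times_T\tilde f)$ at a pair $((x,v),(x,w))$ gives
\[
(x,\,g(x)+f(v)+f(w))=(x,\,2g(x)+f(v)+f(w)),
\]
forcing $g\equiv 0$. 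Therefore $\tilde f=\mathrm{id}_T\times f$, and by part (1) the map $f$ is automatically a homomorphism of representations.

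There is no real obstacle; both parts are routine once the explicit formulas from Proposition \ref{prop_ly_ext} are in hand. The only point to be careful about is that the compatibility with $\alpha$ is genuinely needed in (2): linearity of $\tilde f$ alone allows a nonzero $g$, and it is precisely the doubling effect of $\alpha$ on the $g$-part that kills it.
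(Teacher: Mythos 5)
Your proposal is correct and follows essentially the same route as the paper: part (1) by direct comparison of the explicit operations from Proposition \ref{prop_ly_ext}(1)(i)', and part (2) by writing the second component as $g(x)+f(v)$ and using compatibility with $\alpha$ to force $g\equiv 0$. The only cosmetic difference is that the paper evaluates the $\alpha$-compatibility at $((x,0),(x,0))$ rather than at a general pair, which is immaterial.
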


\begin{proof}
(1) 
This is straightforward from the description of 
the structures of infinitesimal $s$-manifolds on $T\oplus V_1$ and $T\oplus V_2$ 
(Proposition \ref{prop_ly_ext} (1) (i)', (iv), (2)) 

(2)
Since $\tilde{f}$ is a homomorphism over $T$, 
it restricts to $f: V_1\to V_2$. 

If we write $\tilde{f}(x, 0)=(x, g(x))$, 
then $\tilde{f}(\alpha((x, 0), (x, 0)))=\alpha(\tilde{f}(x, 0), \tilde{f}(x, 0))$ 
can be rewritten as $(x, g(x))=(x, g(x)+g(x))$. 
Hence we have $g(x)=0$ and the assertion follows. 
\end{proof}

\begin{corollary}\label{cor_eq_ly_ext}
The correspondence of Proposition \ref{prop_ly_ext} gives an equivalence of the following categories: 
\begin{itemize}
\item[(a)]
The category $\mathbf{Rep}^r(T, \sigma)$ of regular representations 
$(V, \rho, \delta, \theta, \psi)$ of $(T, \sigma)$. 
\item[(b)]
The category $\mathbf{Ab}(\mathbf{Ism}/(T, \sigma))$ of 
abelian group objects in $\mathbf{Ism}$ over $(T, \sigma)$: 
Objects are $((\tilde{T}, \tilde{\sigma}), \pi, \zeta, \alpha, \iota)$, 
where 
\begin{itemize}
\item
$(\tilde{T}, \tilde{\sigma})$ is an infinitesimal $s$-manifold, 
\item
$\pi: \tilde{T}\to T$ is a homomorphism of infinitesimal $s$-manifolds, 
by which $\tilde{T}$ is regarded as an infinitesimal $s$-manifold over $T$, and 
\item
$\zeta: T\to \tilde{T}$, 
$\alpha: \tilde{T}\times_T\tilde{T}\to \tilde{T}$ and $\iota: \tilde{T}\to\tilde{T}$ 
are homomorphisms over $T$, 
\end{itemize}
satisfying the ``axiom of abelian groups'' over $T$. 

The morphisms are homomorphisms of infinitesimal $s$-manifolds over $T$ 
compatible with other data. 
\end{itemize}
\end{corollary}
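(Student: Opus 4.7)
The plan is to build a quasi-inverse pair of functors between $\mathbf{Rep}^r(T,\sigma)$ and $\mathbf{Ab}(\mathbf{Ism}/(T,\sigma))$ that package together the object-level correspondence of Proposition \ref{prop_ly_ext} with the morphism-level analysis of Proposition \ref{prop_hom_ly_ext}. Almost all of the calculation has already been done; what remains is to assemble it into a categorical equivalence and to handle the $\tilde{\sigma}$-data coherently.

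First I will define a functor $\Phi: \mathbf{Rep}^r(T,\sigma)\to \mathbf{Ab}(\mathbf{Ism}/(T,\sigma))$. On objects, send $(V,\rho,\delta,\theta,\psi)$ to $(T\oplus V,\sigma\times\psi)$ together with the projection $\pi$, zero section $\zeta(x)=(x,0)$, fibrewise addition $\alpha$, and fibrewise inversion $\iota$. By Proposition \ref{prop_ly_ext}(1), the representation gives a Lie-Yamaguti structure on $T\oplus V$ for which $\pi,\zeta,\alpha,\iota$ are homomorphisms and which satisfies the abelian group axioms over $T$; by Proposition \ref{prop_ly_ext}(2), regularity of $(V,\psi)$ makes $(T\oplus V,\sigma\times\psi)$ an infinitesimal $s$-manifold and promotes $\pi,\zeta,\alpha,\iota$ to homomorphisms of infinitesimal $s$-manifolds over $(T,\sigma)$. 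On morphisms, send a representation homomorphism $f:V_1\to V_2$ to $\mathrm{id}_T\times f$, which is a morphism in the target category by Proposition \ref{prop_hom_ly_ext}(1) together with the immediate compatibility with $\pi,\zeta,\alpha,\iota$.

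Next I will construct the quasi-inverse $\Psi$. Given $(\tilde{T},\tilde{\sigma},\pi,\zeta,\alpha,\iota)$, set $V:=\ker\pi$ and use $\zeta$ to split $\tilde{T}=\zeta(T)\oplus V$; this identifies $\tilde{T}$ with $T\oplus V$ as a vector space. By Proposition \ref{prop_ly_ext}(1), the Lie-Yamaguti structure on $\tilde{T}$ then takes the standard form (i)' and corresponds to a representation $(V,\rho,\delta,\theta)$ of $T$. The key point is that, because $\pi$ and $\zeta$ are homomorphisms of infinitesimal $s$-manifolds, $\tilde{\sigma}$ preserves both $V$ and $\zeta(T)$, so under the identification $\tilde{\sigma}=\sigma\times\psi$ where $\psi:=\tilde{\sigma}|_V$. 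The conditions (ISM0) for $\tilde{\sigma}$ on $\tilde{T}$ restrict to invertibility of $\psi$ and $\mathrm{id}_V-\psi$, so $(V,\psi)$ is regular; then Proposition \ref{prop_ly_ext}(2) promotes $(V,\rho,\delta,\theta,\psi)$ to a regular representation of $(T,\sigma)$. On morphisms, $\Psi$ sends $\tilde{F}:\tilde{T}_1\to\tilde{T}_2$ to its restriction $f:=\tilde{F}|_{V_1}:V_1\to V_2$, which lands in $V_2$ because $\tilde{F}$ is a morphism over $T$, and which is a representation homomorphism by Proposition \ref{prop_hom_ly_ext}(1).

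Finally I will verify $\Psi\circ\Phi=\mathrm{id}$ (on the nose) and produce a natural isomorphism $\Phi\circ\Psi\simeq\mathrm{id}$ via the $\zeta$-splittings; fullness and faithfulness of $\Phi$ then fall out from Proposition \ref{prop_hom_ly_ext}(2), which forces every morphism of abelian group objects over $(T,\sigma)$ to be of the form $\mathrm{id}_T\times f$. The main obstacle is not analytic but bookkeeping: checking that the abelian-group data $\zeta,\alpha,\iota$ carried by a general object of $\mathbf{Ab}(\mathbf{Ism}/(T,\sigma))$ are forced, after the $\zeta$-splitting, to be the standard ones on $T\oplus V$, and that $\tilde{\sigma}$ is then the block map $\sigma\times\psi$. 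Both of these reduce, by the same argument used in the proof of Proposition \ref{prop_ly_ext}(1) to pass between (ii) and (iii), to applying the group axioms $\alpha\circ(\zeta\times_T\mathrm{id})=\mathrm{id}$ and $\alpha\circ(\iota,\mathrm{id})=\zeta\circ\pi$ at suitably chosen points.
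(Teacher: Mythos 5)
Your proposal is correct and follows essentially the same route as the paper: the object-level correspondence is Proposition \ref{prop_ly_ext} (iv) $\Leftrightarrow$ (iii) (with part (2) handling the $\sigma\times\psi$ data), and the morphism-level bijection is Proposition \ref{prop_hom_ly_ext}. You merely spell out the details the paper leaves implicit, such as the $\zeta$-splitting forcing $\tilde{\sigma}$ to be block-diagonal and (ISM0) restricting to regularity of $\psi$, all of which check out.
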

\begin{proof}
From (a), Proposition \ref{prop_ly_ext} (iv) $\Rightarrow$ (iii) 
gives an object of (b). 

Proposition \ref{prop_ly_ext} (iii) $\Rightarrow$ (iv) 
gives a correspondence in the other direction. 

By the previous proposition, these correspondences give 
an equivalence of categories. 

\end{proof}

\section{Main theorem}

Now we state and prove the main theorem. 
For a vector bundle (or a locally free sheaf) $\mathcal{A}$ over $Q$ and a point $q\in Q$, 
$\mathcal{A}_q$ will denote the fiber at $q$, 
and if $F: \mathcal{A}\to \mathcal{A}'$ is a linear bundle map, 
$F_q$ denotes the linear map $\mathcal{A}_q\to \mathcal{A}'_q$ between the fibers. 

\begin{theorem}\label{thm_main}
Let $Q$ be a connected regular $s$-manifold and $q\in Q$ a point. 
Write $\mathbb{K}$ for the base field. 

(1)
Given a regular quandle module $\mathcal{A}$ over $Q$, 
there is a natural structure of a regular representation of $(T_qQ, d_qs_q)$ on $\mathcal{A}_q$. 

(2)
For regular quandle modules $\mathcal{A}$ and $\mathcal{A}'$ over $Q$ 
and a homomorphism $F: \mathcal{A}\to\mathcal{A}'$, 
$F_q$ is a homomorphism of representations of $(T_qQ, d_qs_q)$. 
This correspondence gives an injective map 
between the sets of homomorphisms. 
In the smooth or complex analytic case, 
this map is surjective if $Q$ is simply-connected. 

(3)
Assume that $Q$ is a connected, simply-connected smooth or complex analytic 
regular $s$-manifold. 
Given a regular representation $V$ of $(T_qQ, d_qs_q)$, 
there exists a regular quandle module $\mathcal{A}$ over $Q$ 
such that $\mathcal{A}_q$ is isomorphic to $V$. 

\smallbreak
In other words, there is a faithful functor between the following categories: 
\begin{itemize}
\item
The category $\mathbf{Mod}_{\mathbb{K}}^r(Q)$ 
of regular quandle modules $(\mathcal{A}, \eta, \tau)$ over $(Q, \rhd)$. 
\item
The category $\mathbf{Rep}^r(T_qQ, d_qs_q)$ 
of regular representations $(V, \rho, \delta, \theta, \psi)$ 
of the infinitesimal $s$-manifold $(T_qQ, *, [\ ], d_qs_q)$. 
\end{itemize}
In the smooth or complex analytic case, 
if $Q$ is simply-connected, it is an equivalence. 
\end{theorem}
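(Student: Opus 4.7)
The plan is to combine two categorical equivalences with the functor $\mathcal{T}$ of Corollary~\ref{cor_functor}. By Proposition~\ref{prop_quandle_space_from_module}, a linear quandle module over $Q$ is a vector bundle $\pi\colon\mathcal{A}\to Q$ with a compatible quandle operation, i.e.\ an abelian group object in quandle spaces over $Q$; by Corollary~\ref{cor_eq_ly_ext}, a regular representation of $(T_qQ, d_qs_q)$ is an abelian group object in $\mathbf{Ism}$ over $(T_qQ, d_qs_q)$. The strategy is to pass between the two sides using $\mathcal{T}$, and to use the uniqueness/lifting statements of Theorem~\ref{thm_hom} to get all compatibilities.

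For (1), I would first check that the total space of a regular module is itself a connected regular $s$-manifold: at $x=(y, a)$ the tangent space splits as $T_yQ\oplus \mathcal{A}_y$ and $d_x s_x$ is block lower-triangular with diagonal blocks $d_y s_y$ and $\eta_{yy}$, whose complements to the identity are invertible by the regularity hypotheses. The projection, zero section, fiberwise addition, inversion and scalar multiplication are quandle homomorphisms, so they form an abelian group object in pointed regular $s$-manifolds over $(Q, q)$. Applying $\mathcal{T}$ yields an abelian group object in $\mathbf{Ism}$ over $(T_qQ, d_qs_q)$ whose underlying infinitesimal $s$-manifold is $T_qQ\oplus \mathcal{A}_q$ with $\tilde\sigma = d_qs_q\times \eta_{qq}$; invertibility of $\mathrm{id}-\eta_{qq}=\tau_{qq}$ makes this representation regular, and Corollary~\ref{cor_eq_ly_ext} reads off $(\rho, \delta, \theta, \psi)$ on $\mathcal{A}_q$.

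For (2), functoriality of $\mathcal{T}$ immediately makes $F_q$ a homomorphism of representations. Injectivity follows from the uniqueness clause of Theorem~\ref{thm_hom}(1): two module maps $F_1, F_2$ with $F_{1,q}=F_{2,q}$ agree at $\zeta(q)$ and have the same tangent map $\mathrm{id}_{T_qQ}\oplus F_{j,q}$ there, hence coincide as quandle maps of regular $s$-manifolds. For surjectivity in the simply-connected smooth or complex analytic case, Proposition~\ref{prop_hom_ly_ext} promotes a representation morphism $f\colon\mathcal{A}_q\to\mathcal{A}'_q$ to a morphism $\mathrm{id}_{T_qQ}\oplus f$ of infinitesimal $s$-manifolds; since $\mathcal{A}$ and $\mathcal{A}'$ retract onto $Q$ and are thus simply-connected, Theorem~\ref{thm_hom}(2) lifts this to a quandle homomorphism $F\colon\mathcal{A}\to\mathcal{A}'$. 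Compatibility with the projections and the fiberwise operations is then obtained by applying the uniqueness of Theorem~\ref{thm_hom}(1) to the pairs $(\pi'\circ F,\pi)$, $(\alpha'\circ(F\times_Q F), F\circ\alpha)$ and their analogues for scalar multiplication and the zero section, using that $\mathcal{A}\times_Q\mathcal{A}$ is itself a (connected) regular $s$-manifold.

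For (3), Corollary~\ref{cor_eq_ly_ext} realizes $V$ as an abelian group object $(T_qQ\oplus V, d_qs_q\times\psi)$ in $\mathbf{Ism}$ over $(T_qQ, d_qs_q)$. The equivalence $\mathbf{SCrsm}_*\simeq\mathbf{Ism}$ of Corollary~\ref{cor_functor} produces a simply-connected regular $s$-manifold $\mathcal{A}$ with $T_{\tilde q}\mathcal{A}\cong T_qQ\oplus V$, and lifts the projection, the zero section and each scalar multiplication to quandle homomorphisms $\pi\colon\mathcal{A}\to Q$, $\zeta\colon Q\to\mathcal{A}$, $\tilde\mu_\lambda\colon\mathcal{A}\to\mathcal{A}$. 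The delicate step is lifting the addition $\alpha\colon(T_qQ\oplus V)\times_{T_qQ}(T_qQ\oplus V)\to T_qQ\oplus V$, which requires $\mathcal{A}\times_Q\mathcal{A}$ to be a simply-connected regular $s$-manifold. Using that $V$, viewed as the kernel subalgebra of $T_qQ\oplus V$, is abelian (trivial $*$ and $[\ ]$) and integrates to the manifold $V$ itself with operation $u\rhd v = u+\psi(v-u)$, I would show that $\pi$ is a locally trivial fibration whose fibers are diffeomorphic (biholomorphic) to $V$; the long exact homotopy sequence then yields simple-connectedness of $\mathcal{A}\times_Q\mathcal{A}$. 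Faithfulness of $\mathcal{T}$ pushes the abelian-group axioms from $\mathbf{Ism}$ back to $\mathcal{A}$, and Proposition~\ref{prop_quandle_space_from_module} repackages the vector bundle with compatible quandle structure as a regular quandle module with fiber $V$ at $q$. The main obstacle is precisely this fiber analysis: identifying the fibers of $\pi$ with $V$ and showing $\pi$ is a fibration, since this is what guarantees simple-connectedness of the iterated fiber products needed to apply Corollary~\ref{cor_functor} for the addition and for verifying the abelian group axioms. Every other step reduces to a formal manipulation of the two correspondences with abelian group objects and the uniqueness/lifting content of Theorem~\ref{thm_hom}.
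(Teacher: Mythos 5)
Your overall architecture is the same as the paper's: pass through the two abelian-group-object correspondences (Proposition \ref{prop_quandle_space_from_module} and Corollary \ref{cor_eq_ly_ext}), connect them with the functor $\mathcal{T}$, and use the uniqueness and lifting statements of Theorem \ref{thm_hom} for all compatibilities. Parts (1) and (2) are essentially correct as sketched; your block-triangular computation of $d_{(y,a)}s_{(y,a)}$ is in fact a slightly more direct route to regularity of the total space than the paper's conjugation argument, and your use of uniqueness in Theorem \ref{thm_hom}(1) to transfer the compatibility of $F$ with $\pi$, $\alpha$ and $\mu_\lambda$ back to $\mathcal{A}$ is exactly what the paper does.

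The one genuine gap is in part (3), at precisely the step you flag as the main obstacle. You assert that the abelian ideal $V\subseteq T_qQ\oplus V$ ``integrates to the manifold $V$ itself,'' and from this that the fibers of $\Pi$ are copies of $V$. But a priori the connected subgroup $G_V$ of the integrating group $G_1$ with Lie algebra $V$, and hence its orbit through the base point, could be a quotient $V/\Lambda$ by a nontrivial discrete subgroup (a cylinder or torus); nothing in the Lie-algebra data alone rules this out, and your sentence assumes the conclusion. The paper closes this by a separate argument: after showing that the $G_{\tilde V}$-orbits (for $\tilde V=V\oplus\fh(V,T_1)$) are exactly the fibers of $\Pi$ (using simple-connectedness of $Q$ to get connectedness of the fibers), it shows the stabilizer $V_{q_1}$ is trivial by exploiting the lifted scalar multiplications $M_\lambda$: if $v\notin V_{q_1}$ but $2v\in V_{q_1}$, then applying $M_{1/2}$ to $i(2v)=q_1$ forces $i(v)=q_1$, a contradiction. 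You do have the maps $\tilde\mu_\lambda$ available in your setup, but you never use them for this purpose, and without some such argument the identification of the fiber with $V$ — and hence the simple-connectedness of $\mathcal{A}\times_Q\mathcal{A}$ and the lifting of the addition — does not go through. A secondary, more routine omission is the verification that the resulting fiberwise group structure is locally trivial with linear transition functions (the paper's last two claims), which you would still need to invoke Proposition \ref{prop_quandle_space_from_module}.
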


\begin{proof}
Let us write $(T, \sigma)$ for $(T_qQ, d_qs_q)$. 

(1)
Recall that $\mathcal{A}$ is a vector bundle. 
By Proposition \ref{prop_quandle_space_from_module}, 
the total space $\mathcal{A}$ 
and $\mathcal{A}\times_Q\mathcal{A}$ 
can be considered as a smooth quandle, etc.,  
such that the projection $\Pi: \mathcal{A}\to Q$, 
the zero section $Z: Q\to\mathcal{A}$, 
the fiberwise addition $A: \mathcal{A}\times_Q \mathcal{A}\to \mathcal{A}$ 
and the fiberwise inversion $I: \mathcal{A}\to\mathcal{A}$ 
are homomorphisms of smooth quandles, etc. 
Note that $\mathcal{A}\times_Q\mathcal{A}$
can also be considered as the quandle 
associated to the direct sum quandle module $\mathcal{A}\oplus \mathcal{A}$. 

We write $(q, a)$ for a point $a\in \mathcal{A}_q$ 
and $(q, a, b)$ for $(a, b)\in (\mathcal{A}\times_Q \mathcal{A})_q$ 
and $V$ for $\mathcal{A}_q$. 
Then we have natural identifications  
\[
T_{(q, 0)}\mathcal{A}\cong T\oplus V, \qquad
T_{(q, 0, 0)}(\mathcal{A}\times_Q \mathcal{A})
\cong T\oplus V\oplus  V
\]
as vector spaces, by identifying $T$ with the tangent spaces 
to the zero sections. 
The linear maps $d_{(q, 0)}s_{(q, 0)}$ and $d_{(q, 0, 0)}s_{(q, 0, 0)}$ 
can be identified with $(d_qs_q, \eta_{qq})$ and $(d_qs_q, \eta_{qq}, \eta_{qq})$. 
By the definition of a regular quandle module, $1-\eta_{qq}$ is invertible, 
and so is $1-\eta_{xx}$ for any $x\in Q$ by the transitivity of $Q$. 
It follows that the action of $\mathcal{A}_x$ on $\mathcal{A}_x$ from the left is transitive. 
Thus the differentials of $d_{(x, a)}s_{(x, a)}$ are conjugates of $d_{(q, 0)}s_{(q, 0)}$ 
for any $a\in \mathcal{A}_x$, 
and $\mathcal{A}$ is a regular $s$-manifold. 
One can prove that $\mathcal{A}\times_Q \mathcal{A}$ is a regular $s$-manifold 
in the same way. 

By Theorem \ref{thm_reg_s_mfd} (3), 
$T_{(q, 0)}\mathcal{A}$ and $T_{(q, 0, 0)}(\mathcal{A}\times_Q \mathcal{A})$ 
are infinitesimal $s$-manifolds in a natural way. 
The latter can be identified with the fiber product 
$T_{(q, 0)}\mathcal{A}\times_T T_{(q, 0)}\mathcal{A}$ 
as an infinitesimal $s$-manifold. 
In fact, the projection maps $P_1, P_2: \mathcal{A}\times_Q \mathcal{A}\to\mathcal{A}$ 
induces such an isomorphism $(d_{(q, 0, 0)}P_1, d_{(q, 0, 0)}P_2)$ of vector spaces, 
and since $P_1$ and $P_2$ are quandle homomorphisms, 
it is a homomorphism of infinitesimal $s$-manifolds 
by Theorem \ref{thm_hom} (1). 

The differential of $\Pi$, $Z$, $A$ and $I$ are 
homomorphisms of infinitesimal $s$-manifolds 
by Theorem \ref{thm_hom} (1), 
and by Corollary \ref{cor_eq_ly_ext} 
we can define a structure of a regular representation of $(T, d_qs_q)$ on $(V, \eta_{qq})$. 
Or, more concretely, 
Proposition \ref{prop_ly_ext} (ii)' $\Rightarrow$ (iv) 
gives the correspondence.

(2)
By Theorem \ref{thm_hom} (1), 
the homomorphism $F$ induces a 
homomorphism $T\oplus \mathcal{A}_q\to T\oplus \mathcal{A}'_q$ 
of infinitesimal $s$-manifolds, 
which is the identity on $T$ 
and maps $\mathcal{A}_q$ to $\mathcal{A}'_q$. 
By Proposition \ref{prop_hom_ly_ext} (1), 
the map $\mathcal{A}_q\to \mathcal{A}'_q$ 
is a homomorphism of representations of $(T, \sigma)$. 

The injectivity of the correspondence follows from Theorem \ref{thm_hom} (1). 

Assume that $Q$ is simply-connected. 
Then $\mathcal{A}$ is also simply-connected. 
By Corollary \ref{cor_eq_ly_ext}, 
a homomorphism $f: \mathcal{A}_q\to \mathcal{A}'_q$ 
of representations of $(T, \sigma)$ 
gives rise to a homomorphism 
$id_T\times f: T\oplus \mathcal{A}_q\to T\oplus \mathcal{A}'_q$ 
of infinitesimal $s$-manifolds 
compatible with the addition maps on the two algebras. 
It is also compatible with $\mu_\lambda$ in Proposition \ref{prop_ly_ext}. 
By Theorem \ref{thm_hom} (2), 
we have a quandle homomorphism $F: \mathcal{A}\to \mathcal{A}'$. 
Since the projection maps, the addition maps and the scalar multiplications 
for $\mathcal{A}$ and $\mathcal{A}'$ 
are quandle homomorphisms whose differentials are 
the projection maps, $\alpha$ and $\mu_\lambda$ 
on $T\oplus \mathcal{A}_q$ and $T\oplus \mathcal{A}'_q$, 
it follows from Theorem \ref{thm_hom} (1) 
that $F$ commutes with the projection maps, the addition maps and scalar multiplication maps. 
Thus it is a homomorphism of linear quandle modules 
by Proposition \ref{prop_quandle_space_from_module}. 

(3)
For a regular representation $(V, \psi)$ of $(T, \sigma)$, 
we define a structure of an infinitesimal $s$-manifold on $T_1:=T\oplus V$ 
according to Proposition \ref{prop_ly_ext}. 
The direct sum $V\oplus V$ gives rise to
an infinitesimal $s$-manifold $T_2:=T\oplus V\oplus V$, 
and from the two projection maps $V\oplus V\to V$ 
we obtain linear maps $p_1, p_2: T_2\to T_1$, 
which are homomorphisms of infinitesimal $s$-manifolds over $T$ 
by Proposition \ref{prop_hom_ly_ext}. 
It follows that the natural identification 
$T_2\cong T_1\times_T T_1$ 
is an isomorphism of infinitesimal $s$-manifolds. 
(This can also be proven by an explicit calculation.) 

Let $\mathcal{A}$ and $\mathcal{A}_2$ 
be connected, simply-connected regular $s$-manifolds 
endowed with points $q_1$ and $q_2$ 
and isomorphisms 
$T_{q_1}\mathcal{A}\cong T_1$ and $T_{q_2}\mathcal{A}_2\cong T_2$, 
which exist by Theorem \ref{thm_reg_s_mfd} (5). 
Later in the proof, 
we will see that $\mathcal{A}_2\cong \mathcal{A}\times_Q\mathcal{A}$. 
We regard $T_1$ and $T_2$ as infinitesimal $s$-manifolds over $T$ 
by the projection maps $\pi: T_1\to T$ and $\pi_2: T_2\to T$. 
By Proposition \ref{prop_ly_ext} (1), (ii) $\Leftrightarrow$ (iii) $\Leftrightarrow$ (iv), 
we have the following homomorphisms over $T$: 
The zero section $\zeta: T\to T_1$, 
the addition $\alpha: T_2\to T_1$, 
the inversion $\iota: T_1\to T_1$ 
and scalar multiplications $\mu_\lambda: T_1\to T_1$ as in Proposition \ref{prop_ly_ext} (1)(ii). 
 By Theorem \ref{thm_hom} (2), 
the projection maps are differentials of 
quandle morphisms $\Pi: \mathcal{A}\to Q$, $\Pi_2: \mathcal{A}_2\to Q$, 
by which we regard $\mathcal{A}$ and $\mathcal{A}_2$ as quandles over $Q$. 
The two projections $p_1, p_2: T_2\to T_1$ 
and $\zeta, \alpha$ and $\mu_\lambda$ 
correspond to homomorphisms 
$P_1, P_2: \mathcal{A}_2\to \mathcal{A}$, 
$Z: Q\to \mathcal{A}$, 
$A: \mathcal{A}_2\to \mathcal{A}$, 
$I: \mathcal{A}\to \mathcal{A}$ 
and $M_\lambda: \mathcal{A}\to \mathcal{A}$ 
of quandles over $Q$. 

Now we will show that $\mathcal{A}$ is a vector bundle over $Q$ 
with the fiber $\mathcal{A}_q := \Pi^{-1}(q)$ isomorphic to $V$. 
Let $G$ and $G_1$ be connected, simply-connected Lie groups whose Lie algebras 
are isomorphic to $\fg(T)$ and $\fg(T_1)$, respectively. 
Since $\pi: T_1\to T$ is surjective, 
we have a Lie algebra homomorphism $\tilde{\pi}: \fg(T_1)\to \fg(T)$ extending $\pi$ 
by Proposition \ref{prop_alg_hom_extended} (1). 
Let $\tilde{\Pi}: G_1\to G$ be the corresponding homomorphism. 
Since $V\subseteq T\oplus V=T_1$ is an abelian ideal, 
$\tilde{V}:=V\oplus \fh(V, T_1)$ is an ideal of $\fg(T_1)$ 
and $V$ is an abelian ideal of $\tilde{V}$ 
by Lemma \ref{lem_subalg_from_ideal}. 
Thus we have connected normal Lie subgroups 
$G_{\tilde{V}}\lhd G_1$ and $G_V\lhd G_{\tilde{V}}$ with 
$T_eG_{\tilde{V}}=\tilde{V}$ and $T_eG_V=V$.

\begin{claim}
The $G_{\tilde{V}}$-orbits in $\mathcal{A}$ are fibers of $\Pi$. 
\end{claim}
\begin{proof}
By Lemma \ref{lem_subalg_from_ideal}(2), 
$G_{\tilde{V}}\subseteq \mathrm{Ker}(\tilde{\Pi})$ holds. 
Thus, for $g\in G_{\tilde{V}}$ and $x\in \mathcal{A}$, 
we have $\Pi(g\cdot x)=\tilde{\Pi}(g)\cdot\Pi(x)=\Pi(x)$, 
i.e. the $G_{\tilde{V}}$-action preserves the fibers of $\Pi$. 

Since $V$ can be identified with the tangent space $T_{q_1}\mathcal{A}_q$ 
of the fiber $\mathcal{A}_q$, 
the orbit $G_{\tilde{V}}\cdot q_1$ contains an open subset of $\mathcal{A}_q$. 
By standard arguments in continuous actions, 
$G_{\tilde{V}}\cdot q_1$ is in fact open in $\mathcal{A}_q$. 
Since $G_{\tilde{V}}$ is normal in $G_1$, 
a translate of a $G_{\tilde{V}}$-orbit by an element of $G_1$ is again a $G_{\tilde{V}}$-orbit. 
By the transitivity of the $G_1$-action on $\mathcal{A}$, 
any $G_{\tilde{V}}$-orbit in $\mathcal{A}$ is a translate of $G_{\tilde{V}}\cdot q_1$, 
and is an open subset of a fiber of $\Pi$. 
It is also closed in the fiber since it is the complement of other orbits, 
and therefore is a connected component of a fiber. 
Since $Q$ is simply-connected and $\mathcal{A}$ is connected, 
fibers of $\Pi$ are connected, 
hence $G_{\tilde{V}}$-orbits are exactly fibers of $\Pi$. 
\end{proof}

If we regard $V$ as a commutative Lie group, 
then $G_V$ is a quotient of $V$ by a discrete subgroup, 
and $V$ acts on $\mathcal{A}$ via $G_V$. 

\begin{claim}
The action of $V$ on $\mathcal{A}_q$ is simply transitive. 
\end{claim}

\begin{proof}
Let $H'$ be the connected Lie subgroup of $G_{\tilde{V}}$ corresponding to $\fh(V, T_1)$. 
We see that $G_{\tilde{V}}$ is generated by $G_V$ and $H'$, 
and $G_{\tilde{V}}=G_V H'$ holds since $G_V$ is normal in $G_{\tilde{V}}$. 

If $H_1$ denotes the connected subgroup of $G_1$ corresponding to $\fh(T_1)$, 
then $\mathcal{A}$ is described as $G_1/H_1$ 
and hence $H_1$ is the stabilizer of $q_1$ in $G_1$. 
From $\fh(V, T_1)\subseteq \fh(T_1)$ 
we see that $H'\subset H_1$ holds. 
Thus $\mathcal{A}_q=G_{\tilde{V}}\cdot q_1=G_V H'\cdot q_1=G_V\cdot q_1$ holds. 
In other words, $\mathcal{A}_q=V\cdot q_1$. 
(We remark that a general $V$-orbit is not necessarily a whole fiber.)

From $V\cong T_{q_1}\mathcal{A}_q$, 
we see that the stabilizer $V_{q_1}$ is discrete. 
If $V_{q_1}\not=0$, 
then we can take an element $v\in V\setminus V_{q_1}$ with $2v\in V_{q_1}$. 
If we consider the map $i: V\to A_q; v\mapsto v\cdot q_1$, 
the map $M_\lambda$ lifts to the usual scalar multiplication on $V$, 
since $i$ is the map 
induced from the Lie algebra homomorphism $V\to \fg(T_1)$, 
for which the scalar multiplication by $\lambda$ on $V$ 
and the map $\mu_\lambda$ on $\fg(T_1)$ are compatible. 
Thus 
\[
i(v)=i\left(\frac{1}{2}\cdot 2v\right)=M_{1/2}(i(2v))=M_{1/2}(q_1)=M_{1/2}(i(0))=
i\left(\frac{1}{2}\cdot 0\right)=q_1, 
\]
a contradiction. 
Thus $V_{q_1}$ is trivial, 
i.e. the action of $V$ on $\mathcal{A}_q$ is simply transitive. 
\end{proof}

\begin{claim}
The map $(P_1, P_2): \mathcal{A}_2\to \mathcal{A}\times_Q \mathcal{A}$ 
is an isomorphism. 
\end{claim}
\begin{proof}
The fiber $\mathcal{A}_q$ is isomorphic to $V$ by the previous claim. 
By the transitivity of $\mathcal{A}$, so is any fiber. 
It follows that any fiber of $\mathcal{A}\times_Q \mathcal{A}$ is 
isomorphic to $V\times V$, 
and that $\mathcal{A}\times_Q \mathcal{A}$ is simply-connected. 

Both $\mathcal{A}_2$ and $\mathcal{A}\times_Q \mathcal{A}$ 
are connected and simply-connected regular $s$-manifolds 
with the infinitesimal algebra $T_1\times_T T_1$, 
so the natural map 
$\mathcal{A}_2\to \mathcal{A}\times_Q \mathcal{A}$ 
is an isomorphism. 
\end{proof}

In particular, $A$ can be considered as 
a binary operation $\mathcal{A}\times_Q \mathcal{A}\to\mathcal{A}$. 
The data $(\mathcal{A}, \Pi, Z, A, I, \{M_\lambda\}_{\lambda\in\mathbb{K}})$ 
satisfies the relative version of axioms of $\mathbb{K}$-vector space over $Q$, 
and in particular defines a structure of a $\mathbb{K}$-vector space 
on each fiber $\mathcal{A}_x$. 

\begin{claim}
Let $g: \mathcal{A}\to\mathcal{A}$ be a homomorphism 
of smooth quandles or complex analytic quandles 
which covers a quandle homomorphism $Q\to Q$ 
and preserves $Z(Q)$. 

Then $g$ induces linear maps between fibers, 
i.e. for any $a, b\in \mathcal{A}_x$ and $\lambda\in \mathbb{K}$, 
we have $g(A(a, b)) = A(g(a), g(b))$ and $g(M_\lambda(a))=M_\lambda(g(a))$. 
\end{claim}
\begin{proof}
Let us prove the first equality. The proof of the second one is similar. 

Write $F_1=g\circ A$ and $F_2=A\circ(g\times_Q g)$. 
They are maps from $\mathcal{A}\times_Q\mathcal{A}$ to $\mathcal{A}$, 
and what we have to show is that they coincide. 
Since they are homomorphism of smooth quandles or complex analytic quandles, 
it is sufficient to show that $F_1(q_2)=F_2(q_2)$ 
and $d_{q_2}F_1=d_{q_2}F_2$ by Theorem \ref{thm_hom} (1). 

Let $Z_2=(Z, Z): Q\to \mathcal{A}\times_Q\mathcal{A}$. 
Then $q_2=(Z(q), Z(q))=Z_2(q)$ 
under the identification $\mathcal{A}_2\cong \mathcal{A}\times_Q\mathcal{A}$. 
Since $g$ covers a map $\bar{g}: Q\to Q$ 
and preserves $Z(Q)$, 
it follows that $g(Z(x))=Z(\bar{g}(x))$ for any $x\in Q$, 
and since $Z(x)$ and $Z(\bar{g}(x))$ are the zero elements of 
$\mathcal{A}_x$ and $\mathcal{A}_{\bar{g}(x)}$ for $A$, respectively, 
we have 
\begin{eqnarray*}
F_1(Z_2(x)) & = & g(A(Z(x), Z(x)))=g(Z(x))=Z(\bar{g}(x)) \hbox{ and} \\
F_2(Z_2(x)) & = & A(g(Z(x)), g(Z(x))) = A(Z(\bar{g}(x)), Z(\bar{g}(x))) = Z(\bar{g}(x)). 
\end{eqnarray*}
Thus $F_1|_{Z_2(Q)}=F_2|_{Z_2(Q)}$ holds.  
In particular, we have $F_1(q_2)=F_2(q_2)$ 
and $(d_{q_2}F_1)_{T_{q_2}Z_2(Q)}=(d_{q_2}F_2)_{T_{q_2}Z_2(Q)}$. 

From the condition that $Z$ is the ``zero element'' for $A$, i.e. 
$A\circ (id_\mathcal{A}\times_Q Z)=A\circ (Z\times_Q id_\mathcal{A})=id_\mathcal{A}$ 
as maps $\mathcal{A}\cong \mathcal{A}\times_Q Q\cong  Q\times_Q \mathcal{A}\to \mathcal{A}$, 
it follows that the restrictions of $d_{q_2}F_1$ and $d_{q_2}F_2$ 
to the first and the second fiber directions 
are all equal to $(d_{q_1}g)|_V: V\to T_{Z(\bar{g}(q))}\mathcal{A}$. 
Thus $d_{q_2}F_1=d_{q_2}F_2$ holds. 
\end{proof}

\begin{claim}
There is a structure of a vector bundle on $\mathcal{A}$ 
for which the projection, the zero section,  the addition map, 
the inversion map and the scalar multiplications are $\Pi$, $Z$, $A$, $I$ 
and $M_\lambda$, respectively. 
\end{claim}
 
\begin{proof}
Let $\tilde{\fg}'$ be the Lie algebra 
obtained by appplying Proposition \ref{prop_alg_hom_extended} to $\zeta: T\to T_1$. 
Then the corresponding Lie group $G'$ maps surjectively to $G$, 
hence the map $a: G'\to Q; g\mapsto g\cdot q$ is surjective, 
and there is a natural map $G'\to G_1$ compatible with $Z$. 

For any point $x_0\in Q$, take a(n \'etale) neighborhood $U$ in $Q$ 
and a local section $s: U\to G'$ for $a$. 
Then the map $\varphi_s: U\times V\to \mathcal{A}; (x, v)\mapsto s(x)(v\cdot q_1)$ gives
an isomorphism to $\Pi^{-1}(U)$ as manifolds over $U$. 

From the explicit form of $\alpha$ and $\mu_\lambda$, 
it is easy to see that the map $v\mapsto v\cdot q_1$ is 
an isomorphism of $V$ and $\mathcal{A}_q$ as linear spaces. 
By the previous claim, 
$\varphi_s$ is linear on each fiber. 
Thus, given two local sections $s, s': U\to G'$ for $a$, 
the transition function $g: U\times V\to V$ defined by 
$\varphi_{s'}^{-1}\circ\varphi_s(x, v)=(x, g(x, v))$ is linear in $v$, 
hence the assertion. 
\end{proof}

Thus $\mathcal{A}$ is a vector bundle with a compatible quandle structure, 
i.e. a quandle module over $Q$ (Proposition \ref{prop_quandle_space_from_module}). 
It is regular since $\eta_{xx}$ are conjugate to $\psi$. 

If we apply (1) to $\mathcal{A}$, 
the isomorphism $T_{q_1}\mathcal{A}\cong T\oplus \mathcal{A}_q$ in (1) 
can be identified with 
the isomorphism $T_{q_1}\mathcal{A} \cong T_eG_1/T_eH_1 \cong T_1=T\oplus V$: 
In the former, the first factor $T$ is given by differentiating $Q\cong Z(Q)\subseteq\mathcal{A}$, 
and in the latter it is the image of $\zeta$, which can be identified with $d_qZ$.
In the former, $\mathcal{A}_q$ is the tangent space to the fiber, 
and so is equal to the kernel of $d_{q_1}\Pi$, and so is equal to $V$ in the latter. 

Therefore the representation of $(T, \sigma)$ associated to $\mathcal{A}$ by (1) 
is isomorphic to $(V, \psi)$.

\end{proof}

\end{document}